\theoremstyle{plain}
\newtheorem{theorem}{Theorem}[section]
\newtheorem{proposition}[theorem]{Proposition}
\newtheorem{corollary}[theorem]{Corollary}
\newtheorem{lemma}[theorem]{Lemma}
\newtheorem*{propositionn}{Proposition}
\newtheorem{propdef}[theorem]{Proposition/Definition}
\numberwithin{equation}{section}
\theoremstyle{definition}
\newtheorem{definition}[theorem]{Definition}
\newtheorem{remark}[theorem]{Remark}
\newtheorem{example}[theorem]{Example}
\newtheorem{exercise}[theorem]{Exercise}
\newcommand{\lra}{\longrightarrow}
\newcommand{\noi}{\noindent}
\newcommand{\PP}{\mathbf{P}}
\newcommand{\RR}{\mathbf{R}}
\newcommand{\NN}{\mathbf{N}}
\newcommand{\CC}{\mathbf{C}}
\newcommand{\QQ}{\mathbf{Q}}
\newcommand{\Adj}[2]{\textnormal{Adj}_{#1}({#2})}
\newcommand{\OO}{\mathcal{O}}
\newcommand{\II}{\mathcal{I}}
\newcommand{\FF}{\mathcal{F}}
\newcommand{\fra}{\mathfrak{a}}
\newcommand{\frb}{\mathfrak{b}}
\newcommand{\frd}{\mathfrak{d}}
\newcommand{\bull}{_{\bullet}}
\newcommand{\frakm}{\mathfrak{m}}
\newcommand{\frq}{\mathfrak{q}}
\newcommand{\frj}{\mathfrak{j}}
\newcommand{\frr}{\frak{r}}
\newcommand{\eps}{\varepsilon}
\newcommand{\sbl}{\vskip 3pt}
\newcommand{\lbl}{\vskip 6pt}
\newcommand{\HH}[3]{H^{{#1}} \big( {#2} , {#3}
\big) }
\newcommand{\HHH}[3]{H^{{#1}} \Big( {#2} , {#3}
\Big) }
\newcommand{\hh}[3]{h^{{#1}} \big( {#2} , {#3}
\big) }
\newcommand{\lct}{\textnormal{lct}}
\newcommand{\exc}{\text{Exc}}
\newcommand{\MI}[1]{\mathcal{J}  ( {#1}
) }
\newcommand{\ord}{\textnormal{ord}}
\newcommand{\codim}{\textnormal{codim}}
\newcommand{\mult}{\textnormal{mult}}
\newcommand{\pr}{\prime}
\newcommand{\num}{ \equiv_{\text{num}} }
\newcommand{\lin}{\equiv_{\text{lin}} }
\newcommand{\Bl}{\text{Bl}}
\newcommand{\divisor}{\textnormal{div}}
\newcommand{\Linser}[1]{| \mspace{1.5mu} {#1}
\mspace{1.5mu} |}
\newcommand{\linser}[1]{\Linser{  {#1}  }}
\newcommand{\alinser}[1]{\| \mspace{1.5mu}{#1}
		\mspace{1.5mu}\|}
\newcommand{\bs}[1]{\frb \big(\mspace{1.1mu}
\linser{{#1}} \mspace{1.1mu} \big)}
\newcommand{\tn}[1]{\textnormal{{#1}}}
 \newcommand{\interior}{\textnormal{int}}
\newcommand{\BBB}{\mathbf{B}}
\newcommand{\MIan}[1]{\mathcal{J}_{\textnormal{an}}{( {#1} )}}
\begin{document}
  
\title[A Short Course on Multiplier Ideals]{A Short Course  on Multiplier Ideals}

\author{Robert Lazarsfeld}
\address{Department of Mathematics, University of Michigan,  Ann Arbor, MI  48109}
\email{rlaz@umich.edu}
\thanks{Research partially supported by NSF grant DMS 0652845}

\maketitle
\setlength{\parskip}{3pt }
\tableofcontents
  
\setlength{\parskip}{.1 in}

\section*{Introduction}

These notes are the write-up of my 2008 PCMI lectures on multiplier ideals. 
They aim to give an introduction to the algebro-geometric side of the theory, with an emphasis on its global aspects. Besides serving as warm-up for the lectures of Hacon, my hope was to convey to the audience a feeling for the sorts of problems for which multiplier ideals have proved useful. Thus I have focused on concrete examples and applications at the expense of general theory. While referring to \cite{PAG} and other sources for some  technical points, I have tried to include sufficient detail here so that the conscientious reader can arrive at a reasonable grasp of the machinery by working through these lectures. 

The revolutionary work of Hacon--McKernan, Takayama and Birkar--Cascini--Hacon--McKernan  (\cite{HM}, \cite{HM2}, \cite{Tak}, \cite{BCHM}) appeared shortly after the publication of \cite{PAG}, and these papers have led to some changes of perspectives on multiplier ideals.   In particular, the first three made clear the importance of adjoint ideals as a tool in proving extension theorems; these were not so clearly in focus at the time \cite{PAG} was written.  I have taken this new viewpoint into account in discussing  the restriction theorem in Lecture 3. Adjoint ideals also open the door to an extremely transparent presentation of Siu's theorem on deformation-invariance of plurigenera of varieties of general type, which appears in Lecture 5.

Besides Part III of \cite{PAG}, I have co-authored an overview of multiplier ideals once before, in \cite{BL}. Those notes focused more on the local and algebraic aspects of the story.  The analytic theory is  surveyed in \cite{SiuMultiplier}, as well as in other lecture series in this volume. 

I wish to thank Eugene Eisenstein, Christopher Hacon, J\'anos Koll\'ar and Mircea Musta\c t\u a    for valuable suggestions. I am particularly grateful to  Sam Grushevsky, who read through in its entirety a draft of these lectures, and made copious suggestions. 

\section{Construction and Examples of Multiplier Ideals}
This preliminary lecture is devoted to the construction and first properties of multiplier ideals. We start by discussing the  algebraic and analytic incarnations of these ideals. After giving the example of monomial ideals, we survey briefly some of the invariants of singularities that can be defined via multiplier ideals. 

\subsection*{Definition of Multiplier Ideals}
In this section, we will give the definition of multiplier ideals.

We work throughout with a smooth algebraic variety $X$ of dimension $d$ defined over $\CC$. For the moment, we will deal with two sorts of geometric objects on $X$:   an ideal sheaf $\fra \subseteq \OO_X$ together with a weighting coefficient $c > 0$, and an effective $\QQ$-divisor  $D$ on $X$. Recall that the latter consists of a formal linear combination  
\[ D \ = \ \sum a_i D_i, \]
where the $D_i$ are distinct prime divisors and each $a_i \in \QQ$ is a non-negative rational number. We will attach to these data multiplier ideal sheaves 
\[
\MI{\fra^c}\  \ , \  \MI{D} \ \subseteq \ \OO_X. \]
The intuition is that these ideals will measure the singularities of $D$  or of functions $f \in \fra$, with ``nastier" singularities being reflected in ``deeper" multiplier ideals. 

Although we will mainly focus on algebraic constructions, it is perhaps most intuitive to start with the analytic avatars of multiplier ideals. 
\begin{definition} [{Analytic multiplier ideals}]
  Given $D=\sum a_i D_i$ as above, choose local equations $f_i \in \OO_X$ for each $D_i$. Then the (analytic) multiplier ideal of $D$ is given locally by
\[
\MIan{X,D}\ =_{\text{locally}} \ \Big\{ \, h \in \OO_X \, \Big | \ \frac{ |h|^2}{\prod |f_i|^{2a_i}} \text{ is locally integrable } \, \Big \} . \]
Similarly, if $f_1, \ldots , f_r \in \fra$ are local generators, then
\[
\MIan{X,\fra^c}\ =_{\text{locally}} \ \Big\{ \, h \in \OO_X \, \Big | \ \frac{ |h|^2}{\big(\sum |f_i|^{2}\big)^c} \text{ is locally integrable } \, \Big \} .   \]
(One checks that these do not depend on the choice of the $f_i$.)\qed
\end{definition}
  Equivalently, $\MIan{D}$ and $\MIan{\fra^c}$ arise as the multiplier ideal $\MI{\phi}$, where $\phi$ is the appropriate one of the two plurisubharmonic functions
\[
\phi \ = \ \sum  \log |f_i|^{2a_i}  \ \  \text{or} \ \ \phi \ = \ c \cdot \log \big(\sum |f_i|^2 \big). \]
Note that this construction exhibits quite clearly the yoga that ``more" singularities give rise to ``deeper" multiplier ideals: the singularities of $f \in \fra$ or of $D$ are reflected in the rate at which the real-vallued functions
\[
\frac{1}{\prod |f_i|^{2a_i}} \ \ \text{or} \ \ \frac{1}{  \big(\sum |f_i|^{2}\big)^c}
\]
blow-up along the support of $D$ or the zeroes of $\fra$, and this  in turn is measured by the vanishing of the multipliers $h$ required to ensure integrability.

\begin{exercise}  \label{An.MI.SNC.Divisor}  Suppose that $ D = \sum a_i D_i$ has simple normal crossing support. Then 
  \[ 
  \MIan{X, D} \ = \ \OO_X( - [D] ),  \]  
  where $[D] = \sum [a_i] D_i$ is the round-down (or integer part) of $D$. 
  (\textsc{Hint}:  This boils down to the assertion that if $z_1, \ldots, z_d$ are the standard complex coordinates in $\CC^d$, and if $h \in \CC\{ z_1, \ldots, z_d \}$ is a convergent power series, then 
  \[  \frac{|h|^2}{|z_1|^{2a_1} \cdot \ldots \cdot |z_d|^{2 a_d}}
  \]
  is locally integrable near the origin if and only if
  \[
  z_1^{[a_1]} \cdot \ldots \cdot z_d^{[a_d]} \ | \ h 
  \]
 in $\CC\{ z_1, \ldots, z_d \}$. By separating  variables, this in turn follows from the elementary computation that  the function  $1/|z|^{2c}$ of one variable is locally integrable if and only if $c < 1$.) \qed  \end{exercise}

 Multiplier ideals can also be constructed algebro-geometrically. 
 Let 
 \[ \mu : X^\pr \lra X \]
 be a log resolution of $D$ or of $\fra$. Recall that this means to begin with that $\mu$ is a proper morphism, with $X^\pr$ smooth. In the first instance we require that $\mu^* D + \text{Exc}(\mu)$ have simple normal crossing (SNC) support, while in the second one asks that
 \[ \fra \cdot \OO_{X^\pr} \ = \ \OO_{X^\pr}(-F) \]
 where $F$ is an effective divisor and  $F + \text{Exc}(\mu)$ has SNC support. 
 We consider also the relative canonical bundle
 \[  K_{X^\pr / X} \ = \ \det ( d\mu ), \]
 so that $K_{X^\pr / X} \lin K_{X^\pr} - \mu^* K_X$. Note that this is well-defined as an actual divisor supported on the exceptional locus of $\mu$ (and not merely as a linear equivalence class).  
 \begin{definition} [Algebraic multiplier ideal]  \label{Def.Alg.Mult.Ideals}
 The multiplier ideals associated to $D$ and to $\fra$ are defined to be:
 \begin{align*}
 \MI{D} \ &= \ \mu_*  \OO_{X^\pr}(K_{X^\pr/X} - [\mu^*D ] ) \\ 
  \MI{\fra^c} \ &= \ \mu_*  \OO_{X^\pr}(K_{X^\pr/X} - [cF ] ).
   \end{align*}
 (As in the previous Exercise, the integer part of a $\QQ$-divisor is defined by taking the integer part of each of its corefficients.) \qed \end{definition} 
 \noi Observe that these are subsheaves of 
 \[ \mu_* \OO_{X^\pr}(K_{X^\pr/X}) \ = \ \OO_X, \]
 i.e. they are indeed ideal sheaves.
 
 One can rephrase the definition more concretely in terms of discrepancies. Write
 \begin{equation} \label{Discrepancy.Equation}
 \mu^* D \ = \ \sum r_i E_i \ \ , \ \ K_{X^\pr/X} \ = \ \sum b_i E_i, \end{equation}
 where the $E_i$ are distinct prime divisors on $X^\pr$:  thus the $r_i$ are non-negative rational numbers and the $b_i$ are non-negative integers. We view each of the $E_i$ as defining a valuation $\ord_{E_i}$ on rational or regular functions on $X$. Then it follows from  Definition \ref{Def.Alg.Mult.Ideals} that 
  \[
  \MI{D} \ = \ \big \{ f \in \CC(X) \, \big | \, \ord_{E_i}(f) \ge [r_i] - b_i \, ,  
 \, \text{with $f$ otherwise regular} \big \},   \]
with a similar interpretation of $\MI{\fra^c}$. (Note that we are abusing notation a bit here: $\MI{D}$ is actually the sheaf determined in the evident manner by the recipe on the right.) Observe that $b_i > 0$ only when $E_i$ is $\mu$-exceptional, so the condition \[  \ord_{E_i}(f) \ge [r_i] - b_i \] does not allow $f$ to have any poles on $X$. Thus we see again that $\MI{D}$ is a sheaf of ideals.

\begin{remark}
The definitions of 
  $\MIan{D}$ and $\MI{D}$ may seem somewhat arbitrary or unmotivated, but they  are actually dictated by the vanishing theorems that multiplier ideals satisfy. In the algebraic case, this will become clear for example in the proof of Theorem \ref{Nadel.Vanishing.Theorem}. \qed
\end{remark}

\begin{example}
We work out explicitly one (artificially) simple example. Let $X = \CC^2$, let $A_1, A_2, A_3 \subseteq X$ be three distinct lines through the origin, and set
\[  D \ = \ \frac{2}{3} ( A_1 + A_2 + A_3 ). \]
Then $D$ is resolved by simply blowing up the origin:
\[ \mu : X^\pr \, = \, \Bl_0(X) \lra X.
\]
Writing $E$ for the exceptional divisor of $\mu$, and $A_i^\pr$ for the proper transform of $A_i$, one has
\begin{align*} 
\mu^* (A_1 + A_2 + A_3) \ &= \ (A_1^\pr + A_2^\pr + A_3^\pr) \, + \, 3E \\
\mu^* D \ &= \ \frac{2}{3} ( A_1^\pr + A_2^\pr + A_3^\pr) + 2E \\
[\mu^* D] \ &= \ 2E. \end{align*}
Moreover $K_{X^\pr/X} = E$, and hence
\[  
\MI{D} \ = \ \mu_* \OO_{X^\pr}(-E)
\]
is the maximal ideal of functions vanishing at the origin. Observe that this computation also shows that rounding does not in general commute with pull-back of $\QQ$-divisors. \qed
\end{example}

The algebraic construction of multiplier ideals started by choosing a resolution of singularities. Therefore it is important to establish:
\begin{proposition}
The multiplier ideals $\MI{D}$ and $\MI{\fra^c}$ do not depend on the resolution used to construct them.
\end{proposition}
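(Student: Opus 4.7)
My plan is the standard two-step reduction. First, by Hironaka's theorem on resolution of singularities, any two log resolutions $\mu_i\colon X'_i \to X$ ($i=1,2$) of $D$ are dominated by a third log resolution $\mu_3\colon X'_3 \to X$ fitting into $\mu_i\circ\nu_i = \mu_3$ for morphisms $\nu_i \colon X'_3 \to X'_i$. So it suffices to prove: if $\nu\colon Y\to Y'$ is a proper birational morphism of smooth varieties with both $\mu'\colon Y'\to X$ and $\mu = \mu'\circ\nu\colon Y\to X$ log resolutions of $D$, then the multiplier ideals computed on $Y$ and on $Y'$ coincide.

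The second step is an algebraic manipulation via the projection formula. Decompose $\mu'^*D = [\mu'^*D] + B$, where $B = \{\mu'^*D\}$ is the fractional part: an effective $\QQ$-divisor on $Y'$ with SNC support and all coefficients in $[0,1)$. Since $\nu^*[\mu'^*D]$ has integer coefficients, $[\nu^*\mu'^*D] = \nu^*[\mu'^*D] + [\nu^*B]$, so together with $K_{Y/X} = K_{Y/Y'} + \nu^*K_{Y'/X}$ one obtains
\[
K_{Y/X} - [\nu^*\mu'^*D] \;=\; \nu^*\bigl(K_{Y'/X} - [\mu'^*D]\bigr) \,+\, K_{Y/Y'} - [\nu^*B].
\]
The projection formula then yields
\[
\nu_*\OO_Y\bigl(K_{Y/X} - [\nu^*\mu'^*D]\bigr) \;=\; \OO_{Y'}\bigl(K_{Y'/X} - [\mu'^*D]\bigr) \otimes \nu_*\OO_Y\bigl(K_{Y/Y'} - [\nu^*B]\bigr),
\]
so after applying $\mu'_*$ the proposition reduces to the local identity
\[
\nu_*\OO_Y\bigl(K_{Y/Y'} - [\nu^*B]\bigr) \;=\; \OO_{Y'}.
\]

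This last identity is the main obstacle. The containment $\subseteq$ is immediate from $[\nu^*B] \ge 0$ together with $\nu_*\OO_Y(K_{Y/Y'}) = \OO_{Y'}$ (smoothness of $Y'$). For the reverse, I must verify that every $\nu$-exceptional prime divisor $E \subset Y$ satisfies $\ord_E(K_{Y/Y'}) \ge [\,\ord_E(\nu^*B)\,]$. The crucial input is that $B$ has SNC support with every coefficient strictly less than $1$---the algebraic avatar of the integrability threshold used in Exercise \ref{An.MI.SNC.Divisor}. Working in local SNC coordinates $(z_1,\dots,z_d)$ on $Y'$ with $B = \sum b_j\{z_j=0\}$ and $0 \le b_j < 1$, the required inequality reduces to a direct Jacobian computation at the divisorial valuation $\ord_E$, the strict inequality $b_j < 1$ being precisely what closes the gap between the fractional quantity $\ord_E(\nu^*B) = \sum b_j \ord_E(z_j)$ and its integer part. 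The case of $\MI{\fra^c}$ is handled identically with $cF$ (where $\fra\cdot\OO_{Y'} = \OO_{Y'}(-F)$) in place of $\mu'^*D$.
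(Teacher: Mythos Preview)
Your proof is correct and follows essentially the same route as the paper: dominate any two resolutions by a third, then reduce via the projection formula to the key lemma that for an SNC $\QQ$-divisor on $Y'$ the multiplier ideal computed after a further blow-up $\nu\colon Y\to Y'$ is unchanged---the paper states this as $\nu_*\OO_Y(K_{Y/Y'}-[\nu^*D])=\OO_{Y'}(-[D])$ for SNC $D$, while you peel off the integer part first and state it for the fractional part $B$ with $[B]=0$, which is equivalent. Both you and the paper leave the final discrepancy inequality $\ord_E(K_{Y/Y'})\ge[\ord_E(\nu^*B)]$ as an elementary direct calculation (the paper refers to \cite[9.2.19]{PAG}).
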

\noi In brief, using the fact that any two resolutions can be dominated by a third, one reduces to checking that if $X$ is already a log resolution of the data at hand, then nothing is changed by passing to a further blow-up:
  \begin{lemma}
Assume that $D$ has SNC  support, and let $\mu : X^\pr \lra X$ be a further log resolution of $(X, D)$. Then
\[
\mu_* \OO_{X^\pr}\big(K_{X^\pr/X} - [\mu^*D]\big) \ = \ \OO_X\big(-[D]\big).
\]
 \end{lemma}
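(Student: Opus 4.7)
The plan is to use the projection formula to reduce to the case where $D$ has all coefficients in $[0,1)$, then show that the resulting divisor on $X'$ is effective and $\mu$-exceptional, whence its pushforward equals $\OO_X$ by normality.

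Write $D = [D] + \Delta$, where $\Delta := D - [D]$ has all coefficients in $[0,1)$. Since $\mu^*[D]$ has integer coefficients, rounding is additive: $[\mu^* D] = \mu^*[D] + [\mu^*\Delta]$. The projection formula then gives
$$
\mu_*\OO_{X'}(K_{X'/X} - [\mu^* D]) \ = \ \mu_*\OO_{X'}(K_{X'/X} - [\mu^*\Delta]) \otimes \OO_X(-[D]),
$$
reducing the lemma to the identity $\mu_* \OO_{X'}(K_{X'/X} - [\mu^* \Delta]) = \OO_X$. Setting $G := K_{X'/X} - [\mu^*\Delta]$, I would check that $G$ is an effective $\mu$-exceptional divisor. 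On any non-exceptional prime $F$ of $X'$, the coefficient of $K_{X'/X}$ is $0$, while that of $[\mu^*\Delta]$ is either $0$ (if $F$ is not a proper transform of any $D_i$) or rounds down from a coefficient in $[0,1)$ to $0$ (if $F$ is the proper transform of some $D_i$); hence $G$ is $\mu$-exceptional. For an exceptional prime $E$, writing $b_E = \ord_E K_{X'/X}$, $v_i = \ord_E(\mu^* D_i)$, and $c_E = \ord_E \mu^*\Delta$, effectivity amounts to $c_E < b_E + 1$. Since the coefficients of $\Delta$ lie in $[0,1)$, we have $c_E < \sum_i v_i$, so it is enough to establish the discrepancy bound $b_E + 1 \ge \sum_i v_i$.

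This bound is the main technical step. I would derive it from the natural injection of sheaves of logarithmic differentials
$$
\mu^* \Omega_X^1\bigl(\log {\textstyle\sum} D_i\bigr) \ \hookrightarrow \ \Omega_{X'}^1(\log D'), \qquad D' := \mu^{-1}\bigl({\textstyle\sum} D_i\bigr)_{\mathrm{red}},
$$
where $D'$ has SNC support by the log-resolution hypothesis. Taking top exterior powers yields the effective divisor
$$
K_{X'/X} + D' - \mu^*\bigl({\textstyle\sum} D_i\bigr) \ \ge \ 0.
$$
Reading off the coefficient along an exceptional $E$ with $\sum_i v_i \ge 1$ (so that $D'$ contains $E$ with coefficient exactly $1$) gives $b_E + 1 - \sum_i v_i \ge 0$, as required. (If $\sum_i v_i = 0$ then $c_E = 0$ and effectivity is trivial.)

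Finally, with $G$ effective and $\mu$-exceptional, sections of $\mu_*\OO_{X'}(G)$ correspond to rational functions on $X$ regular away from the codimension-$\ge 2$ set $\mu(\Supp G)$, and so extend regularly over $X$ by normality. Thus $\mu_* \OO_{X'}(G) = \OO_X$, and the projection-formula reduction completes the proof.
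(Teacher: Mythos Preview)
Your argument is correct. The paper itself does not prove this lemma---it only remarks that ``this in turn can be checked by an elementary direct calculation'' and refers to \cite[9.2.19]{PAG}---so there is no in-text proof to compare against. What you have written is essentially the standard argument: reduce via the projection formula to the case where all coefficients lie in $[0,1)$, and then show that $K_{X'/X} - [\mu^*\Delta]$ is effective and $\mu$-exceptional, so that its pushforward is $\OO_X$ by normality. The heart of the matter is the discrepancy inequality $b_E + 1 \ge \sum_i v_i$, which is exactly the statement that an SNC pair $(X,\sum D_i)$ is log canonical; your derivation of this via the inclusion of logarithmic differential sheaves is a clean and conceptual route. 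An alternative, more hands-on approach (closer in spirit to ``elementary direct calculation'') is to factor $\mu$ as a sequence of blowups along smooth centers meeting the boundary transversally and verify the inequality inductively, but the log-differential argument is both shorter and more illuminating.
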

 \noi This in turn can be checked by an elementary direct calculation. We refer to \cite[9.2.19]{PAG} for details. 
 
 The next point is to reconcile the analytic and algebraic constructions of multiplier ideals. 
 \begin{proposition}  \label{Analy.vs.Alg.MI}
 Let $D$ be an effective $\QQ$-divisor on $X$. Then 
 \[
 \MIan{X,D} \ = \ \MI{X, D}, \]
 and similarly $\MIan{X, \fra^c} = \MI{X,\fra^c}$ for any ideal sheaf $\fra$.   \end{proposition}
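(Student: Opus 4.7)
The plan is to exploit the earlier exercise, which computes the analytic multiplier ideal explicitly in the SNC case, and to bridge upstairs and downstairs via the change-of-variables formula. Since the statement is local on $X$, I would work in a coordinate chart and fix a common log resolution $\mu : X^\pr \lra X$ of $D$ (respectively $\fra$); by the very definition of the algebraic side, it suffices to match both ideals on $X^\pr$. The idea is to show that for $h \in \OO_X$, local integrability on $X$ of $|h|^2 / \prod |f_i|^{2a_i}$ is equivalent to the vanishing conditions
\[
\ord_{E_j}(\mu^* h) \ \ge \ [r_j] - b_j \quad (\forall j),
\]
where $\mu^* D = \sum r_j E_j$ and $K_{X^\pr/X} = \sum b_j E_j$, and this latter condition is exactly membership of $h$ in $\mu_*\OO_{X^\pr}(K_{X^\pr/X} - [\mu^*D]) = \MI{D}$.

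To make the comparison, I would pull the integral defining integrability back through $\mu$. Since $\mu$ is proper and biholomorphic away from a measure-zero exceptional set, the holomorphic change-of-variables formula gives
\[
\int_U \frac{|h|^2}{\prod |f_i|^{2a_i}} \, d\lambda \ = \ \int_{\mu^{-1}U} \frac{|\mu^* h|^2 \cdot |J_\mu|^2}{\prod |\mu^* f_i|^{2a_i}} \, d\lambda,
\]
where $J_\mu$ is the local complex Jacobian of $\mu$. The crucial geometric input is that $K_{X^\pr/X}$ is precisely the divisor of $J_\mu$, so locally on $X^\pr$ in SNC coordinates $z_1,\dots,z_d$ adapted to the components $E_j$, one has $|J_\mu|^2 = |u|^2 \prod |z_j|^{2b_j}$ with $u$ a nonvanishing holomorphic function. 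Similarly, $\prod |\mu^*f_i|^{2a_i} = |v|^2 \prod |z_j|^{2r_j}$ with $v$ a nonvanishing unit, because the local equation of $\mu^*D$ in these coordinates is $\prod z_j^{r_j}$ up to a unit. Thus the integrand on $X^\pr$ has the form
\[
|\mu^* h|^2 \cdot \prod_j |z_j|^{2(b_j - r_j)} \cdot (\text{nowhere-vanishing}),
\]
which reduces the comparison to an integrability question on the SNC side.

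At this point I would invoke Exercise~\ref{An.MI.SNC.Divisor}: the one-variable fact that $1/|z|^{2c}$ is locally integrable iff $c<1$, combined with Fubini, says that $|H|^2 \prod |z_j|^{-2c_j}$ is locally integrable near $0$ if and only if $H$ is divisible by $\prod z_j^{[c_j]}$. Applying this with $c_j = r_j - b_j$ and $H = \mu^*h$ (and using that $b_j \in \ZZ$ so $[r_j - b_j] = [r_j] - b_j$) gives exactly the algebraic order-of-vanishing condition displayed in the first paragraph. Sheafifying the local conclusion then yields $\MIan{D} = \MI{D}$, and the identical argument, with $\prod |f_i|^{2a_i}$ replaced by $\bigl(\sum|f_i|^2\bigr)^c$ and $\mu^* D$ replaced by $cF$, handles $\MIan{\fra^c}$.

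The genuinely substantive step is the Jacobian identification $\divisor(J_\mu) = K_{X^\pr/X}$ together with the justification that the singular change-of-variables formula is valid here; once the ambient integrals have been localized around a point and one has reduced to SNC coordinates, however, this is standard and the remainder of the argument is a direct appeal to the one-variable integrability criterion.
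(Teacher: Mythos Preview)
Your argument is correct and follows essentially the same approach as the paper: the paper packages your change-of-variables computation into Lemma~\ref{Birat.Transf.Rule} (the birational transformation rule for both analytic and algebraic multiplier ideals), and then, exactly as you do, reduces to the SNC case handled by Exercise~\ref{An.MI.SNC.Divisor}. The only difference is organizational: the paper isolates the pushforward identity $\MIan{X,D} = \mu_*\big(\OO_{X^\pr}(K_{X^\pr/X})\otimes \MIan{X^\pr,\mu^*D}\big)$ as a standalone lemma, whereas you carry out the Jacobian computation directly inside the proof.
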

 \noi (Strictly speaking, the analytic multiplier ideals are the analytic sheaves associated to their algebraic counterparts, but we do not dwell on this distinction.)  
 
 For the Proposition, the key point is that both species of multiplier ideals transform the same way under birational morphisms:
 \begin{lemma} \label{Birat.Transf.Rule}
 Let $\mu : X^\pr \lra X$ be a proper birational map, and let $D$ be an effective $\QQ$-divisor on $X$. Then:
 \begin{align*}
 \MIan{X,D} \ &= \ \mu_* \big(  \OO_{X^\pr}(K_{X^\pr/X})\otimes \MIan{X^\pr, \mu^*D} \big) \\
  \MI{X,D} \ &= \ \mu_* \big( \OO_{X^\pr}(K_{X^\pr/X}) \otimes \MI{X^\pr, \mu^*D}  \big) . 
  \end{align*}
 \end{lemma}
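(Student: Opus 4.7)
The strategy is to treat the two identities separately. The algebraic one reduces to a routine computation using a common log resolution, while the analytic one is an instance of the change-of-variables formula for integrals.

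For the algebraic identity, I would first choose a log resolution $\nu : X^{\pr\pr} \to X^\pr$ of $(X^\pr, \mu^* D)$ with the property that $\mu \circ \nu : X^{\pr\pr} \to X$ is simultaneously a log resolution of $(X, D)$; such a choice exists by resolving $(X^\pr, \mu^* D)$ and further blowing up as necessary. By Definition \ref{Def.Alg.Mult.Ideals},
\[
\MI{X^\pr, \mu^* D} \ = \ \nu_* \OO_{X^{\pr\pr}}\bigl(K_{X^{\pr\pr}/X^\pr} - [\nu^* \mu^* D]\bigr),
\]
and applying the projection formula to the line bundle $\OO_{X^\pr}(K_{X^\pr/X})$ gives
\[
\mu_*\!\bigl(\OO_{X^\pr}(K_{X^\pr/X}) \otimes \MI{X^\pr, \mu^* D}\bigr) \ = \ (\mu \nu)_* \OO_{X^{\pr\pr}}\bigl(\nu^* K_{X^\pr/X} + K_{X^{\pr\pr}/X^\pr} - [(\mu\nu)^* D]\bigr).
\]
The transitivity identity $K_{X^{\pr\pr}/X} = K_{X^{\pr\pr}/X^\pr} + \nu^* K_{X^\pr/X}$ then rewrites this as $(\mu\nu)_*\OO_{X^{\pr\pr}}(K_{X^{\pr\pr}/X} - [(\mu\nu)^* D]) = \MI{X, D}$.

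For the analytic identity I would argue via change of variables. In local holomorphic coordinates $z$ on $X$ and $w$ on $X^\pr$, the map $\mu$ is given by $z_i = \mu_i(w)$, and the Jacobian determinant $J(w) = \det(\partial \mu_i / \partial w_j)$ has vanishing divisor equal to $K_{X^\pr/X}$. The line bundle $\OO_{X^\pr}(K_{X^\pr/X})$ is thus locally generated by $1/J$, so a section of $\OO_{X^\pr}(K_{X^\pr/X}) \otimes \MIan{X^\pr, \mu^* D}$ is locally represented as $\tilde h/J$ with $\tilde h \in \MIan{X^\pr, \mu^* D}$. Since $K_{X^\pr/X}$ is effective and $\mu$-exceptional, $\mu_* \OO_{X^\pr}(K_{X^\pr/X}) = \OO_X$, and consequently a holomorphic $h \in \OO_X(U)$ belongs to the pushforward on the right-hand side precisely when $J \cdot \mu^* h \in \MIan{X^\pr, \mu^* D}(\mu^{-1}U)$. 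Invoking the change-of-variables identity $\mu^*(dV_X) = |J|^2\, dV_{X^\pr}$, valid on the open dense locus where $\mu$ is an isomorphism (so on all of $\mu^{-1}(U)$ up to a measure-zero set), yields
\[
\int_U \frac{|h|^2}{\prod |f_i|^{2 a_i}} \, dV_X \ = \ \int_{\mu^{-1}(U)} \frac{|J \, \mu^* h|^2}{\prod |\mu^* f_i|^{2 a_i}} \, dV_{X^\pr}.
\]
The left side is locally finite iff $h \in \MIan{X, D}(U)$, and by the previous sentence the right side is locally finite iff $h$ lies in the pushforward on the right of the lemma.

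I expect the one real obstacle to be the clean identification of the discrepancy divisor $K_{X^\pr/X}$ with the Jacobian factor that appears under change of variables; once that correspondence is spelled out, both identities amount to bookkeeping. A secondary technical point in the analytic case is that restricting the integrals to the iso-locus of $\mu$ is harmless, but this follows at once from absolute convergence and the fact that the exceptional locus of $\mu$ has Lebesgue measure zero in $X^\pr$.
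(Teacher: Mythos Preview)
Your proposal is correct and follows exactly the route the paper indicates: the analytic identity via the change-of-variables formula, and the algebraic one via a common log resolution together with the projection formula and the chain rule $K_{X^{\pr\pr}/X} = K_{X^{\pr\pr}/X^\pr} + \nu^* K_{X^\pr/X}$. The paper does not spell out more than this, so your write-up is in fact more detailed than what appears there; the only small point worth tightening is to ensure that your $\nu$ also resolves the exceptional locus of $\mu$, so that $\mu\circ\nu$ is genuinely a log resolution of $(X,D)$ and you can invoke the resolution-independence established just before the lemma.
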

 \noi  
 In the analytic setting this is a consequence of the change of variables formula for integrals, while the algebraic statement is established with a little computation via the projection formula. The Proposition
 follows at once from the Lemma. In fact, one is reduced to proving Proposition  \ref{Analy.vs.Alg.MI}
 when $D$ or $\fra$ are already in normal crossing form, and this case is handled by Exercise \ref {An.MI.SNC.Divisor}.
 
 \begin{remark}[Multiplier ideals on singular varieties] 
 \label{Mult.Ideal.Sing.Var.Remark}
 Under favorable circumstances, Definition \ref{Def.Alg.Mult.Ideals}
makes sense even when $X$ is singular. The main point at which non-singularity is used in the discussion above is  to be able to define the relative canonical bundle $ K_{X^\pr/X} = K_{X^\pr} - \mu^* K_X$ of a log resolution \[\mu : X^\pr \lra X\] of $(X,D)$. For this it is enough that $X$ is normal and that $K_{X}$ is Cartier or even $\QQ$-Cartier, so that $\mu^* K_X$ is defined. Thus Definition  \ref{Def.Alg.Mult.Ideals} goes through without change provided that $X$ is Gorenstein or $\QQ$-Gorenstein.  For an  arbitrary normal variety $X$, one can introduce a ``boundary" $\QQ$-divisor $\Delta$ such that $K_X + \Delta$ is $\QQ$-Cartier, and define multiplier ideals 
\[ \MI{(X, \Delta);D}  \ \subseteq \ \OO_X. \] These generalizations are discussed briefly in \cite[\S 9.3.G]{PAG}. DeFernex and Hacon explore in \cite{DeFH} the possibility of defining multiplier ideals (without boundaries) on an arbitrary normal variety. However in the sequel we will work almost exclusively with smooth ambient varieties $X$. \qed  \end{remark}

 We conclude this section with two further exercises for the reader.
 
 \begin{exercise}
 Assume that $X$ is affine, and let $\fra \subseteq \CC[X]$ be an ideal. Given $c > 0$, choose $k > c$ general elements
\[  f_1, \ldots, f_k \ \in \ \fra, \]
let $A_i = \divisor(f_i)$, and put 
$D = \frac{c}{k}  ( A_1 + \ldots + A_k )$. Then
\[ \MI{D} \ = \ \MI{\fra^c}. \]
(By a ``general element" of an ideal, one means a general $\CC$-linear combination of a collection of generators of the ideal.) \qed
\end{exercise}
 
 \begin{exercise} \label{Large.Mult.Divisors}
 Let $D = \sum a_i D_i$ be an effective $\QQ$-divisor on $X$.
 Assume that
 \[ \mult_x(D) \ =_{\text{def}} \ \sum a_i \cdot \mult_x(D_i) \ \ge \ \dim X \]
 for some point $x \in X$ . Then $\MI{X, D}$ is non-trivial at $x$, i.e.
 \[  \MI{D}_x \ \subseteq \ \frakm_x \ \subseteq \OO_xX,\]
 where $\frakm_x \subseteq \OO_X$ is the maximal ideal of $x$. 
 (Compute the multiplier ideal  in question using a resolution $\mu : X^\pr \lra X$  that dominates the blow-up of $X$ at $x$, and observe that
 \[  \ord_E\big(K_{X^\pr/X} - [\mu^*D]\big) \ \le \ -1,\]
 where $E$ is the proper transform of the exceptional divisor over $x$.)  \qed\end{exercise}

 \subsection*{Monomial Ideals}
 
It is typically very hard to compute the multiplier ideal of an explicitly given divisor or ideal. One important class of examples that has been worked out is that of monomial ideals on affine space. These are handled by a theorem of Howald \cite{Howald}.

Let $X = \CC^d$, and let \[ \fra \ \subseteq \ \CC[x_1, \ldots, x_d ]\] be an ideal generated by monomials in the $x_i$. Observe that such a monomial is specified by an exponent vector $w = (w_1, \ldots, w_d ) \in \NN^d$: we write
\[  x^w \ = \ x_1^{w_1} \cdot \ldots \cdot x_d^{w_d}.
\] 
  The \textit{Newton polyhedron} 
\[ P(\fra) \ \subseteq \ \RR^d \]
of $\fra$ is the closed convex set spanned by the exponent vectors of all monomials in $\fra$. This is illustrated in Figure \ref{Newt.Polyhedron.Figure}, which shows the Newton polyhedron for the monomial ideal 
\begin{equation} \label{Monom.Ideal.Eqn}
\fra \ = \ \big\langle x^4, x^2y, xy^2, y^5 \big \rangle.
\end{equation}
Finally, put $\mathbf{1} = (1, \ldots, 1) \in \NN^d$. 

\begin{figure}\includegraphics[scale = .75]{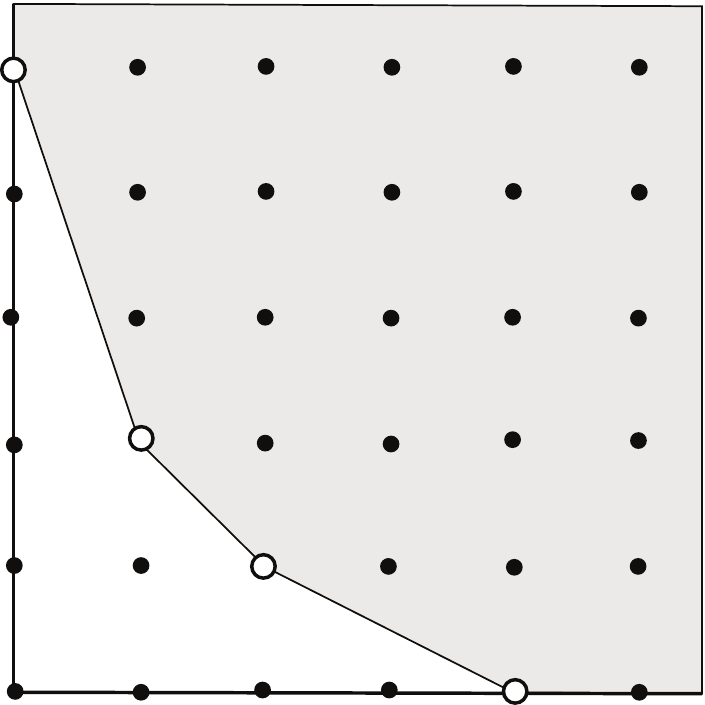}
\caption{Newton Polyhedron of $ \langle x^4, x^2y, xy^2, y^5\rangle$}
\label{Newt.Polyhedron.Figure}
\end{figure}

 Howald's statement is the following:
 \begin{theorem}
 For any $c >0$, 
 \[ \MI{\fra^c} \ \subseteq \ \CC[x_1, \ldots, x_d] \]
 is the monomial ideal spanned by all monomials $x^w$ where
 \[  w \, + \, \mathbf{1} \ \in \ \interior \big(  \, c\cdot P(\fra) \, \big ). \]
  \end{theorem}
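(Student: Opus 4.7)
The plan is to exploit the $(\CC^*)^d$-symmetry and compute $\MI{\fra^c}$ on an explicit toric log resolution. Because $\fra$ is monomial, the multiplier ideal $\MI{\fra^c}$ is itself monomial, so it suffices to identify which monomials $x^w$ it contains. I would construct a toric log resolution $\mu : X^\pr \lra X$ as a smooth subdivision of the fan of $X = \CC^d$ (the nonnegative orthant in $\RR^d$) whose set of rays contains all primitive inward normals to the facets of $P(\fra)$. Write $\fra \cdot \OO_{X^\pr} = \OO_{X^\pr}(-F)$ for the principal pullback.

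The key step is the toric dictionary: for each primitive ray $v = (v_1, \ldots, v_d) \in \NN^d$ of the fan, the associated torus-invariant prime divisor $E_v$ on $X^\pr$ satisfies
\[
\ord_{E_v}(x^w) = \langle v, w \rangle, \quad \ord_{E_v}(F) = \min_{u \in P(\fra)} \langle v, u \rangle =: \ord_v(\fra), \quad \ord_{E_v}(K_{X^\pr/X}) = \langle v, \mathbf{1}\rangle - 1,
\]
the last being the standard toric discrepancy formula, which should be justified by a computation on an affine toric chart. Plugging these into Definition \ref{Def.Alg.Mult.Ideals}, the condition $x^w \in \MI{\fra^c}$ translates, for each ray $v$ of the fan, into
\[
\langle v, w \rangle \ge [c \cdot \ord_v(\fra)] - \langle v, \mathbf{1}\rangle + 1,
\]
which rearranges to $\langle v, w + \mathbf{1}\rangle \ge [c \cdot \ord_v(\fra)] + 1$. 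Since the left-hand side is an integer, this is in turn equivalent to the strict real inequality $\langle v, w + \mathbf{1}\rangle > c \cdot \ord_v(\fra)$.

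It remains to recognize these strict inequalities as the condition $w + \mathbf{1} \in \interior(c \cdot P(\fra))$. Because $P(\fra)$ has recession cone $\RR^d_{\ge 0}$, for each nonzero $v \in \RR^d_{\ge 0}$ the linear functional $\langle v, \cdot \rangle$ attains its minimum on $c \cdot P(\fra)$ along a proper face, so a point $u$ lies in the interior of $c \cdot P(\fra)$ if and only if $\langle v, u \rangle > c \cdot \ord_v(\fra)$ for every primitive facet-normal $v$; since our fan contains all those facet normals (and the inequalities for the extra rays are automatically implied by the facet inequalities via linear-programming duality), applying this criterion to $u = w + \mathbf{1}$ finishes the proof. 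The main obstacle is bookkeeping the rounding: tracking how the non-strict integer inequality coming from Definition \ref{Def.Alg.Mult.Ideals} converts into the strict real inequality defining the interior of $c \cdot P(\fra)$ is exactly what produces the $+\mathbf{1}$ shift and the word ``interior'' in the statement.
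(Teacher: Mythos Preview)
Your approach is correct and is essentially the standard argument due to Howald. Note, however, that the paper does not actually prove this theorem: after stating it, the text simply says ``Once one knows the statement for which one is aiming, the proof is relatively straight-forward: see \cite{Howald} or \cite[\S 9.3.C]{PAG}.'' So there is no ``paper's own proof'' to compare against; you have supplied exactly the argument those references give.

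One small point worth tightening: to guarantee that $\fra \cdot \OO_{X^\pr}$ is invertible, it is not quite enough that the rays of your fan include the primitive facet normals of $P(\fra)$; you need the fan to \emph{refine} the normal fan of $P(\fra)$, so that $v \mapsto \ord_v(\fra)$ is linear on each maximal cone. Of course any smooth common refinement accomplishes this, so it is only a matter of phrasing. Your treatment of the rounding step --- passing from the integer inequality $\langle v, w + \mathbf{1}\rangle \ge [c \cdot \ord_v(\fra)] + 1$ to the strict real inequality $\langle v, w + \mathbf{1}\rangle > c \cdot \ord_v(\fra)$ --- is exactly right, and your observation that for any nonzero $v \in \RR^d_{\ge 0}$ the minimum of $\langle v,\cdot\rangle$ on $c\cdot P(\fra)$ is attained on a proper face handles both directions of the interior characterization cleanly.
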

 
 \noi Once one knows the statement for which one is aiming, the proof is relatively straight-forward: see \cite{Howald} or \cite[\S 9.3.C]{PAG}. 
 
 \begin{example} For $\fra =  \langle x^4, x^2y, xy^2, y^5\rangle$ , one has
 $\MI{\fra} = (x^2, xy, y^2)$, while if $0<c < 1$ then $(x, y) \subseteq \MI{\fra^c}$. \qed
  \end{example}
 
 \begin{example} \label{Diag.Ideal}
  Let 
 \[ \fra \ = \ \big ( x_1^{e_1}, \ldots, x_d^{e_d} \big) \ \subseteq \ \CC[x_1, \ldots, x_d ]. \]
 Writing $\xi_1, \ldots, \xi_d$ for the natural coordinates on $\RR^d$ adapted to $\NN^d \subseteq \RR^d$, the Newton polyhedron  $P(\fra) \subseteq \RR^d$ of $\fra$ is the region in the first orthant given by the equation 
 \[  \frac{\xi_1}{e_1} \, + \, \ldots \, + \, \frac{\xi_d}{e_d}\  \ge \ 1. \]
 Hence $\MI{\fra^c}$ is the monomial ideal spanned by all monomials $x^w$ whose exponent vectors satisfy the equation
 \[ \frac{w_1+1}{e_1} \, + \, \ldots \, + \, \frac{w_d+1}{e_d}\ > \ c.\ \qed \]
 \end{example}

 \subsection*{Invariants Defined by Multiplier Ideals}
 
 Multiplier ideals lead to invariants of the singularities of a divisor or the functions in an ideal. The most important and well-known is the following:
  
 \begin{definition}[Log-canonical threshold] \label{Def.LCT}
 Let $D$ be an effective $\QQ$-divisor on a smooth variety $X$, and let $x \in X$ be a fixed point. The \textit{log-canonical threshold} of $D$ at $x$ is 
 \[
\lct_x(D) \ =_{\text{def}} \ \min \big\{  \, c > 0 \, \big | \, \MI{cD} \ \text{ is non-trivial at $x$ } \big \}.
\]
The log-canonical threshold of an ideal sheaf $\fra \subseteq \OO_X$ is defined analogously. \qed
 \end{definition}
 \noi 
  Thus small values of $\lct_x(D)$ reflect more dramatic singularities. 
 Definition \ref{KLT.LC.Def} below explains  the etymology of the term.
 
 \begin{exercise}
Let $\mu : X^\pr \lra X$ be a log resolution of $D$, and as in equation \eqref {Discrepancy.Equation} write
\[ 
\mu^* D \ = \ \sum r_i E_i \ \ , \ \ K_{X^\pr / X} \ = \ \sum b_i E_i. \]
Then
\[
\lct_x(D)  \ = \ \min_{ \mu(E_i) \owns x} \Big \{  \frac{ b_i + 1}{r_i} \Big \},
\]
the minimum being taken over all $i$ such that $x$ lies in the image of the corresponding exceptional divisor. In particular, $\lct_x(D)$  is rational.
 \end{exercise}
 
 \begin{example} [Complex singularity exponent] One of the early appearances of the log-canonical threshold was in the work \cite{Var} of Varchenko,  who studied the \textit{complex singularity exponent} $c_0(f)$ of a polynomial or holomorphic function $f$ in a neighborhood of $0 \in \CC^d$. Specifically, set
 \begin{equation}
 c_0(f) \ = \ \sup \Big \{ c > 0 \, \Big | \, \frac{1}{|f|^{2c}} \ \text{ is locally integrable near $0$ } \Big \}. \tag{*} \end{equation}
 Writing $\lct_0(f)$ for the log-canonical threshold of the divisor determined by $f$ (or equivalently of the principal ideal generated by $f$), it follows from Proposition \ref {Analy.vs.Alg.MI} that
 \[
 c_0(f) = \lct_0(f). 
 \] 
In view of the previous exercise, this establishes the fact -- which is certainly not obvious from (*) -- that $c_0(f)$ is rational. (The rationality of $c_0(f)$ was proven in this manner by Varchenko, although his work pre-dates the language of multiplier ideals.) \qed
\end{example}
 
 \begin{exercise}
 If $D = \{ x^3 - y^2 = 0 \} \subseteq \CC^2$, then $\lct_0(D) = \frac{5}{6}$. \qed\end{exercise}
 
  \begin{exercise}
 Consider as in Example \ref{Diag.Ideal} the monomial ideal
 \[  \fra \ = \ (x_1^{e_1} , \ldots, x_d^{e_d} ) \ \subseteq  \ \CC[x_1, \ldots, x_d]. \]
 Then $\lct_0(\fra) = \sum \frac{1}{e_i}$. \qed
 \end{exercise}

 The log-canonical threshold is the first of a sequence of invariants defined by the ``jumping" of multiplier ideals. Specifically, observe that the ideals $\MI{cD}$ become deeper as the coefficient $c$ grows. So one is led to:
 \begin{propdef} 
 \label{Propdef.Jumping.Nos}
 In the situation of Definition \ref{Def.LCT}, there exists a discrete sequence of rational numbers $\xi_i = \xi_i(D;x)$ with
  \[
  0 = \xi_0 \ < \ \xi_1 \ < \ \xi_2 \ < \ \ldots
\]
characterized by the property that \tn{(}the stalks at $x$ of\tn{)} the multiplier ideals $\MI{cD}_x$ are constant exactly for 
\[  c \ \in \ [\xi_i , \xi_{i+1}). \]
The $\xi_i$ are called the \textit{jumping numbers} of $D$ at $x$.
\end{propdef}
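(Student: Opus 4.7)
The plan is to exploit the algebraic construction of Definition \ref{Def.Alg.Mult.Ideals} by fixing \emph{one} log resolution $\mu : X^\pr \lra X$ of $D$ and using it to compute $\MI{cD}$ for every $c>0$ simultaneously. Writing $\mu^*D = \sum r_i E_i$ and $K_{X^\pr/X} = \sum b_i E_i$ as in \eqref{Discrepancy.Equation}, the formula
\[
\MI{cD} \ = \ \mu_* \OO_{X^\pr}\Big(\sum_i (b_i - [c r_i]) E_i\Big)
\]
exhibits the dependence on $c$ as being encoded entirely in the round-downs $[c r_i]$. So the first step is to analyze how these integer parts vary with $c$.

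Each function $c \mapsto [c r_i]$ is non-decreasing, integer-valued, right-continuous, and changes value only when $c$ crosses an element of $\tfrac{1}{r_i}\ZZ_{>0}$. Setting
\[
S \ = \ \bigcup_{i \,:\, \mu(E_i) \ni x} \Big\{\, \tfrac{n}{r_i} \,\Big|\, n \in \ZZ_{>0} \,\Big\},
\]
only finitely many $E_i$ pass over $x$, and each contributes a discrete subset of $\QQ_{>0}$, so $S$ itself is a locally finite subset of $\QQ_{>0}$. From this I would conclude that the stalk $\MI{cD}_x$ is non-increasing in $c$ and constant on every half-open interval cut out by consecutive points of $S\cup\{0\}$, so that any change must occur on crossing a point of $S$.

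Once this local constancy is in hand, the jumping sequence is produced by discarding from $S\cup\{0\}$ exactly those values of $c$ at which $\MI{cD}_x$ fails to strictly shrink relative to the stalks for slightly smaller parameters. Since the ideals are non-increasing and $S$ is locally finite, the retained points assemble into the desired strictly increasing discrete sequence of rationals $0 = \xi_0 < \xi_1 < \xi_2 < \cdots$, and by construction $\MI{cD}_x$ is constant precisely on each $[\xi_i, \xi_{i+1})$.

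The one point that requires genuine care --- and which I expect to be the main (if mild) obstacle --- is the \emph{right}-continuity at a candidate value $c_0 \in S$: one must verify $\MI{c_0 D}_x = \MI{(c_0 + \varepsilon) D}_x$ for all sufficiently small $\varepsilon > 0$, which justifies attaching the jump to the left endpoint of its interval and gives the half-open shape $[\xi_i, \xi_{i+1})$ rather than $(\xi_i, \xi_{i+1}]$. This reduces to the identity $[c_0 r_i] = [(c_0 + \varepsilon) r_i]$ for small $\varepsilon > 0$, valid even when $c_0 r_i$ is itself an integer (since then both sides equal $c_0 r_i$). Discreteness and rationality of the $\xi_i$ are then automatic consequences of the corresponding properties of $S$.
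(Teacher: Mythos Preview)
Your argument is correct and follows precisely the approach the paper indicates: the paper does not give a formal proof of this Proposition/Definition, but immediately after the statement it remarks that ``the $\xi_i$ occur among the rational numbers $(b_i + m)/r_i$ for various $m \in \NN$,'' which is exactly your observation that all potential jumps lie in the locally finite set $S$ of points where some $[c\,r_i]$ changes value. Your write-up simply supplies the routine details (monotonicity, right-continuity of $c \mapsto [c\,r_i]$, and discarding the non-jumping elements of $S$) that the paper leaves implicit.
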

\noi Jumping numbers of an ideal sheaf $\fra \subseteq \OO_X$ are defined similarly.

  It follows from the definition that $\lct_x(D) = \xi_1(D;x)$. In  the notation of \eqref {Discrepancy.Equation}, the $\xi_i$ occur among the rational numbers ${(b_i + m)}/{r_i}$ for various $m \in \NN$.  First appearing implicitly in work of Libgober and Loeser-Vaqui\'e, these quantities were studied systematically in \cite{JCMI}. In particular, this last paper establishes some connections between jumping coefficients and other invariants.

\begin{exercise} Compute the jumping numbers of the ideal $\fra = (x_1^{e_1}, \ldots, x_d^{e_d})$. \qed
\end{exercise}
 
 \begin{exercise} \label{Mustata.Observation} Let $ \xi_i < \xi_{i+1}$ be consecutive jumping coefficients of an ideal $\fra \subseteq \OO_X$ at a point $x \in X$. Then  \[   \sqrt{\fra} \cdot \MI{\fra^{\xi_i}}_x\ \subseteq \ \MI{\fra^{\xi_{i+1}}}_x
\] 
in $\OO_xX$. In particular,
\[
(\sqrt{\fra})^m \ \subseteq \ \MI{\fra^{\xi_m}}_x
\]
for every $m > 0$. (This was pointed out to us by M. Musta\c t\u a. See Example \ref{Skoda.Nullstellensatz} for an application.) \qed
 \end{exercise}
 
 \begin{remark} \label{Global.LCT.JN}
 In an analogous fashion, one can define the log-canonical threshold $\lct(D)$  and $\lct(\fra)$, as well as  jumping numbers $\xi_i(D)$ and $\xi_i(\fra)$,  globally on $X$, without localizing at a particular point. We leave the relevant definitions -- as well as the natural extension of the previous Exercise --  to the reader. \qed \end{remark}
 
  Finally, we note that multiplier ideals lead to some natural classes of singularities for a pair $(X,D)$ consisting of a smooth variety $X$ and an effective $\QQ$-divisor $D$ on $X$.
 \begin{definition} \label{KLT.LC.Def}
 One says that $(X,D)$ is \textit{Kawamata log-terminal} (KLT) if
 \[  \MI{X,D} \ = \ \OO_X. \]
 The pair $(X,D)$ is \textit{log-canonical} if 
 \[  \MI{X, (1-\eps) D} \ = \ \OO_X \]
 for $0 < \eps \ll 1$. \qed
   \end{definition}
 \noi These concepts (and variants thereof) play an important role in the minimal model program, although in that setting one does not want to limit oneself to smooth ambient varieties. 
 
 \begin{remark} [Characteristic $p$ analogues] Work of Smith, Hara, Yoshida, Watanabe, Takagi, Musta\c t\u a and others has led to the development of theory in characteristic $p > 0$ that closely parallels the theory of multiplier ideals, and reduces to it for ideals lifted from characteristic $0$. We refer to the last section of \cite{Ein.Mustata} for a quick overview and further references. 
 \qed
  \end{remark}

 
\section{Vanishing Theorems for Multiplier Ideals}

In this Lecture we discuss the basic vanishing theorems for multiplier ideals, and give some first applications. As always, we work with varieties over $\CC$.

\subsection*{The Kawamata--Viehweg--Nadel Vanishing Theorem}

We start by recalling some definitions surrounding positivity for divisors. Let $X$ be an irreducible projective variety of dimension $d$, and let $B$ be a (Cartier) divisor on $X$.
One says that $B$ is \textit{nef} (or \textit{numerically effective}) if 
\[   ( B \cdot C  ) \ \ge \ 0 \ \ \text{ for every irreducible curve $C \subseteq X$}.
\]
Nefness means in effect that $B$ is a limit of ample divisors: see \cite[Chapter 1.4]{PAG} for a precise account. A divisor $B$ is \textit{big} if the spaces of sections of $mB$ grow maximally with $m$,  i.e. if
\[
\hh{0}{X}{\OO_X(mB) } \ \sim  \ m^{d} \ \ \text{ for $m \gg 0$. } \]
These definitions extend in the evident manner to $\QQ$-divisors.
We will first deal with divisors that are both nef and big: a typical example   arises by pulling back an ample divisor under a birational morphism.  

A basic fact is that for a nef divisor, bigness is tested numerically:
\begin{lemma}
Assume that $B$ is nef. Then $B$ is big if and only if its top self-intersection number is strictly positive: $(B^d) > 0$. 
\end{lemma}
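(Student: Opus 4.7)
The plan is to compare the Euler characteristic $\chi(X,\OO_X(mB))$ with the zeroth cohomology $h^0(X,\OO_X(mB))$, showing that the two agree to leading order in $m$ when $B$ is nef. Once this is done, both directions of the equivalence become immediate since, by definition, $B$ is big if and only if $h^0(X,\OO_X(mB))$ grows like $m^d$.

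The first input is asymptotic Riemann--Roch. By Snapper's theorem, the function $m \mapsto \chi(X,\OO_X(mB))$ is a numerical polynomial in $m$ of degree at most $d$, with leading coefficient equal to $(B^d)/d!$; thus
\[
\chi\big(X,\OO_X(mB)\big) \ = \ \frac{(B^d)}{d!}\, m^d \, + \, O(m^{d-1}).
\]
This identity uses nothing about positivity of $B$. The second, and crucial, input is the asymptotic vanishing estimate
\[
h^i\big(X,\OO_X(mB)\big) \ = \ O(m^{d-1}) \ \ \text{for all } i \ge 1,
\]
valid whenever $B$ is nef. I would prove this by induction on $d=\dim X$: choose a very ample $A$ on $X$ and a general $H \in |A|$, note that $B|_H$ remains nef on $H$, and use the restriction sequence
\[
0 \ \lra \ \OO_X(mB-H) \ \lra \ \OO_X(mB) \ \lra \ \OO_H(mB|_H) \ \lra \ 0
\]
together with a Fujita-vanishing style argument (comparing twists by $B+A$, which is ample) to propagate the polynomial bound from $H$ to $X$. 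This is the main obstacle: without a genuine use of nefness there is no way to pass from control of $\chi$ to control of $h^0$.

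Granting the two inputs, combine them to obtain
\[
h^0\big(X,\OO_X(mB)\big) \ = \ \chi\big(X,\OO_X(mB)\big) \, + \, O(m^{d-1}) \ = \ \frac{(B^d)}{d!}\, m^d \, + \, O(m^{d-1}).
\]
If $(B^d) > 0$ the leading coefficient is positive, so $h^0(X,\OO_X(mB)) \sim m^d$ and $B$ is big. Conversely, suppose $B$ is big, so $h^0$ grows like $m^d$. Since $h^0$ differs from $\tfrac{(B^d)}{d!}m^d$ by a term of order $m^{d-1}$, the coefficient $(B^d)/d!$ must be strictly positive; note that we already know $(B^d) \ge 0$ for a nef divisor, as intersection numbers of nef divisors arise as limits of intersection numbers of ample divisors and are therefore nonnegative. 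Either way, $(B^d) > 0 \Leftrightarrow B$ is big.
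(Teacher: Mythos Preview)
Your approach is correct and is essentially the standard argument: the paper does not prove the lemma in-text but refers to \cite[\S 2.2]{PAG}, where the proof proceeds exactly as you outline --- asymptotic Riemann--Roch combined with the bound $h^i(X,\OO_X(mB)) = O(m^{d-1})$ for $i>0$ and $B$ nef, yielding $h^0(X,\OO_X(mB)) = \tfrac{(B^d)}{d!}m^d + O(m^{d-1})$. Your sketch of the higher-cohomology bound via restriction to a general hyperplane and induction on dimension is also the line taken there, so there is nothing to add.
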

\noi See \cite[\S 2.2]{PAG} for a proof. This Lemma also remains valid for $\QQ$-divisors.

The fundamental result for our purposes was proved independently by Kawamata and Viehweg in the early 1980's.
\begin{theorem}[Kawamata--Viehweg Vanishing Theorem] \label{Kawamata.Viehweg.Vanishing}
Let $X$ be a smooth projective variety. Consider an integral divisor $L$ and an effective $\QQ$-divisor $D$ on $X$. Assume that
\begin{itemize}
\item[(i).] $L - D$ is nef and big; and
\item[(ii).]  $D$ has simple normal crossing support.
\end{itemize}
Then
\[ \HH{i}{X}{\OO_X(K_X + L - [D])} \ = \ 0 \ \ \text{ for }\ i > 0. \]
\end{theorem}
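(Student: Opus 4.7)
My plan is to deduce this from the classical Kodaira vanishing theorem via a cyclic covering construction, in the style of Esnault--Viehweg and Kawamata.

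\textbf{Step 1 (Reductions).} First I replace $L$ by $L - [D]$ and $D$ by its fractional part $\{D\} = D - [D]$: this preserves the class $L - D$ (and its nef and big properties) and the SNC hypothesis, while reducing all coefficients of $D$ to $[0,1)$. The statement becomes $H^i(X, \OO_X(K_X + L)) = 0$ for $i > 0$. Applying Kodaira's lemma to the nef and big $\QQ$-divisor $L - D$, I write $L - D \equiv_{\QQ} A + E$ with $A$ an ample $\QQ$-divisor and $E$ effective of arbitrarily small norm; after a further log resolution making $D + E$ SNC and absorbing $E$ into $D$ (whose coefficients remain in $[0,1)$ when $E$ is small enough), I may assume $L - D$ is itself an ample $\QQ$-divisor.

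\textbf{Step 2 (Cyclic cover).} I pick an integer $m > 0$ so that $mD$ has integer coefficients and $N := m(L - D)$ is integral and very ample. By Bertini I choose a smooth $H \in |N|$ transversal to the components of $D$; then $mD + H \sim mL$ is an SNC divisor cut out by a section $s \in H^0(\OO_X(mL))$. The associated degree-$m$ cyclic cover $\pi : Y \to X$ obtained by extracting an $m$-th root of $s$ (then normalizing and resolving along the ramification locus) is a smooth projective variety with $\pi$ finite. By design, $\pi^* D$, $\pi^* H$, and $\pi^*(L - D)$ are all integral Cartier divisors on $Y$, and $\pi^*(L - D)$ is ample because $m \pi^*(L-D) = \pi^* H$ is the finite pullback of an ample divisor. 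A standard eigensheaf computation for such covers yields
\[
\pi_* \omega_Y \ = \ \bigoplus_{j=0}^{m-1} \omega_X \otimes \OO_X\bigl(jL - [jD + (j/m)H]\bigr),
\]
and in the $j = 1$ summand the round-down of $D + H/m$ vanishes (each coefficient of $D$ is below $1$ and $1/m < 1$), so $\OO_X(K_X + L)$ appears as a direct summand of $\pi_* \omega_Y$.

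\textbf{Step 3 (Descent).} I apply classical Kodaira vanishing on $Y$ to the integral ample line bundle $\OO_Y(\pi^*(L-D))$ to get $H^i(Y, \omega_Y \otimes \OO_Y(\pi^*(L-D))) = 0$ for $i > 0$. Combining with the projection formula and a $\mu_m$-equivariant decomposition of the twisted pushforward $\pi_*(\omega_Y \otimes \OO_Y(-\pi^*D))$, the vanishing transfers to the eigenfactor isolating $\OO_X(K_X + L)$, giving the desired $H^i(X, \OO_X(K_X + L)) = 0$. The main obstacle is exactly this descent step: the line bundle $\OO_Y(\pi^*(L-D))$ is integral and ample on $Y$ but is not the pullback of a line bundle from $X$ (since the fractional divisor $D$ obstructs pulling back $\OO_X(L-D)$), so I must perform a careful equivariant bookkeeping to compute $\pi_*(\omega_Y(-\pi^* D))$ and match its Kodaira-vanishing summand to the target $j = 1$ eigenfactor of $\pi_*\omega_Y$; this matching is the technical core of the argument.
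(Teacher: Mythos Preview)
The paper does not actually prove Theorem~\ref{Kawamata.Viehweg.Vanishing}: it only remarks that the original proofs proceed via covering constructions and refers the reader to \cite{KM}, \cite{EV}, and \cite[\S4.3.A, \S9.1.C]{PAG}. So your outline is being compared against those standard treatments rather than anything in these notes. Your overall strategy---reduce to the ample case and then use a covering to make $D$ integral so that classical Kodaira applies---is indeed the right one, and your eigensheaf decomposition of $\pi_*\omega_Y$ together with the identification of the $j=1$ summand is correct.

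The main gap is in Step~2: your single cyclic cover branched along $mD+H$ is \emph{not} smooth in general. At a point where two components $D_1, D_2$ of $D$ cross, the local model of the normalized cover is $z^m = x^{c_1}y^{c_2}$ with $0<c_i<m$; already for $m=2,\ c_1=c_2=1$ this is the $A_1$ singularity $z^2=xy$, which is normal but singular. Your parenthetical ``resolving along the ramification locus'' then contradicts the next sentence's claim that $\pi$ is finite, and once $\pi$ is only generically finite the eigensheaf computation and the direct-summand argument no longer go through as written. There are two standard repairs: either replace your cover by a Kawamata--Bloch--Gieseker cover, which is built as an iterated cyclic cover along \emph{smooth} divisors so that the total space stays smooth and $\pi$ remains finite while $\pi^*D$ becomes integral; or keep your cover, accept that $Y$ has toric quotient (hence rational) singularities, and invoke Grauert--Riemenschneider together with the integral nef-and-big case of vanishing on a resolution of $Y$. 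A secondary gap is in Step~1: taking a log resolution of $D+E$ changes $X$, and $K_{X'}+\mu^*L$ differs from $\mu^*(K_X+L)$ by the effective exceptional divisor $K_{X'/X}$, so you must explain why the vanishing upstairs yields the vanishing downstairs. This is fixable (absorb $K_{X'/X}$ into the new $D'$, whose coefficients stay below $1$ for $\varepsilon$ small), but it is not automatic and deserves a sentence.
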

\noi As in the previous Lecture, the integer part (or round-down) $[D]$ of a $\QQ$-divisor $D$ is obtained by taking the integer part of each of its coefficients.

When $D = 0$ and $L$ is ample, this is the classical Kodaira vanishing theorem. Still taking $D = 0$, the Theorem asserts in general that the statement of Kodaira vanishing remains true for divisors that are merely big and nef: this very useful fact -- also due to Kawamata and Viehweg -- completes some earlier results of Ramanujam, Mumford, and Grauert--Riemenschneider. However the real power (and subtlety) of Theorem \ref{Kawamata.Viehweg.Vanishing} lies in the fact that while the positivity hypothesis is tested for a $\QQ$-divisor, the actual vanishing holds for a round of this divisor. As we shall see, this apparently technical improvement vastly increases the power of the result: taking integer parts can significantly change the shape of a divisor, so in favorable circumstances one gets a vanishing for divisors that are far from positive.

The original proofs of the theorem proceeded by using covering constructions to reduce to the case of integral divisors. An account of this approach, taking into account simplifications introduced by Koll\'ar and Mori in \cite{KM}, appears in \cite[\S  4.3.A,  \S 9.1.C]{PAG}. An alternative approach was developed by Esnault and Viehweg \cite{EV}, and the $L^2$ $\overline{\partial}$-machinery gives yet another proof.  In any event, it is nowadays not substantially harder to establish Theorem \ref{Kawamata.Viehweg.Vanishing} than to prove the classical Kodaira vanishing. 

The main difficulty in applying Theorem \ref{Kawamata.Viehweg.Vanishing} is that in practice the normal crossing hypothesis is rarely satisfied directly. Given an arbitrary effective $\QQ$-divisor $D$ on a variety $X$, a natural idea is to apply vanishing on a resolution of singularities and then ``push down" to get a statement on $X$. Multiplier ideals appear inevitably in so doing, and this leads to the basic vanishing theorems for these ideals. 

There are two essential results.
\begin{theorem}[Local vanishing theorem] 
\label{Local.Van.Mult.Ideals}
Let $X$ be a smooth variety, $D$ an effective $\QQ$-divisor on $X$, and \[
\mu : X^\pr \lra X \]
a log resolution of $D$. Then
\[  R^j \mu_* \OO_{X^\pr} \big( K_{X^\pr/X} - [ \mu^* D ] \big) \ = \ 0 \]
for $j > 0$. 
\end{theorem}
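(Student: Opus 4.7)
The plan is to bootstrap this relative assertion to the absolute Kawamata--Viehweg vanishing theorem (Theorem \ref{Kawamata.Viehweg.Vanishing}) by combining compactification with a Leray/Serre argument. Since $R^j\mu_*\OO_{X^\pr}(K_{X^\pr/X} - [\mu^*D])$ is coherent on $X$, its vanishing can be checked on an affine open, and so I would reduce to the case that $X$ is quasi-projective. Applying Hironaka together with principalization, I would then choose a smooth projective compactification $\bar X \supseteq X$ and a projective birational $\bar\mu \colon \bar X^\pr \to \bar X$ extending $\mu$ such that $\bar X^\pr$ is smooth and $\bar\mu^*\bar D + \exc(\bar\mu)$ has SNC support, where $\bar D$ is an extension of $D$ to $\bar X$. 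Set $F = \OO_{\bar X^\pr}(K_{\bar X^\pr/\bar X} - [\bar\mu^*\bar D])$.

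Now fix a very ample $H$ on $\bar X$. By the projection formula, $R^j\bar\mu_*(F \otimes \bar\mu^*\OO_{\bar X}(mH)) = R^j\bar\mu_* F \otimes \OO_{\bar X}(mH)$, and for $m \gg 0$ these sheaves have no higher cohomology by Serre vanishing. The Leray spectral sequence then collapses to give
\[ H^j\big(\bar X^\pr, F \otimes \bar\mu^*\OO_{\bar X}(mH)\big) \ = \ H^0\big(\bar X, R^j\bar\mu_* F \otimes \OO_{\bar X}(mH)\big). \]
If the left-hand side vanishes for every $j > 0$ and every sufficiently large $m$, then $R^j\bar\mu_* F \otimes \OO(mH)$ is a globally generated coherent sheaf with no global sections for $m \gg 0$, forcing $R^j\bar\mu_* F = 0$. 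Restricting back to $X$ via flat base change through the open immersion $X \hookrightarrow \bar X$ then yields the desired $R^j\mu_*\OO_{X^\pr}(K_{X^\pr/X} - [\mu^*D]) = 0$.

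To obtain the required vanishing on $\bar X^\pr$, I would rewrite
\[ K_{\bar X^\pr/\bar X} - [\bar\mu^*\bar D] + m\bar\mu^*H \ = \ K_{\bar X^\pr} + L - [\Delta], \]
with $L = \bar\mu^*(mH - K_{\bar X})$ an honest integral divisor (using that $K_{\bar X}$ is Cartier since $\bar X$ is smooth) and $\Delta = \bar\mu^*\bar D$, which has SNC support by the log resolution hypothesis. Then $L - \Delta = \bar\mu^*(mH - K_{\bar X} - \bar D)$, and for $m \gg 0$ the divisor $mH - K_{\bar X} - \bar D$ is ample on $\bar X$; its pullback under the birational $\bar\mu$ is nef and has top self-intersection $(mH - K_{\bar X} - \bar D)^d > 0$, hence is nef and big by the Lemma preceding Theorem \ref{Kawamata.Viehweg.Vanishing}. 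The hypotheses of Kawamata--Viehweg are thus satisfied, and applying it to the right-hand side produces the required vanishing.

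I expect the main obstacle to be the compactification in the first step: I need a simultaneous smooth projective compactification of $\mu$ such that the SNC property is preserved globally on $\bar X^\pr$, not merely over $X$. This is handled by Hironaka combined with principalization of the ideal of $D$, but care is required to extend $D$ to a $\bar D$ whose total transform, together with the full exceptional divisor of $\bar\mu$, remains simple normal crossing; everything downstream (projection formula, Serre, the KV computation) is then formal.
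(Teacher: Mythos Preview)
Your proposal is correct and follows essentially the same route the paper indicates: the paper does not prove Theorem~\ref{Local.Van.Mult.Ideals} in full but says only that ``one reduces to the case when $X$ is projective, and applies the result of Kawamata and Viehweg in the global setting,'' referring to \cite[\S 9.4.A]{PAG} for details. Your compactification plus the Serre/Leray trick to convert the absolute Kawamata--Viehweg vanishing into the relative statement is exactly this argument, and your identification $L-\Delta = \bar\mu^*(mH-K_{\bar X}-\bar D)$ as a nef and big pullback is the correct application of Theorem~\ref{Kawamata.Viehweg.Vanishing}. The compactification step you flag as the main obstacle is handled just as you describe: compactify $X$ and $X^\prime$ compatibly and then blow up $\bar X^\prime$ further over the boundary $\bar X\setminus X$ to achieve SNC, which does not disturb anything over $X$.
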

\noi The analogous statement holds for higher direct images of the sheaves computing the multiplier ideals $\MI{\fra^c}$. 

\begin{theorem} [Nadel Vanishing Theorem]
\label{Nadel.Vanishing.Theorem}
Let $X$ be a smooth projective variety, and let $L$ and $D$ be respectively an integer divisor and an effective $\QQ$-divisor on $X$. Assume that $L - D$ is nef and big. Then
\[
\HH{i}{X}{\OO_X(K_X + L ) \otimes \MI{D}} \ = \ 0
\]
for $i > 0$. \end{theorem}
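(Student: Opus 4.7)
The plan is to reduce Nadel vanishing to Kawamata--Viehweg on a log resolution, using Local Vanishing to shift the cohomology up to the resolution without change. Let $\mu : X^\pr \lra X$ be a log resolution of $D$, so that $\MI{D} = \mu_*\OO_{X^\pr}(K_{X^\pr/X} - [\mu^*D])$ by Definition \ref{Def.Alg.Mult.Ideals}.

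First I would compute direct images. The projection formula gives
\[
\mu_* \OO_{X^\pr}\big(K_{X^\pr} + \mu^* L - [\mu^*D]\big) \ = \ \OO_X(K_X + L) \otimes \MI{D},
\]
since $K_{X^\pr} - \mu^* K_X = K_{X^\pr/X}$ as divisors. Local Vanishing (Theorem \ref{Local.Van.Mult.Ideals}) says
\[
R^j \mu_* \OO_{X^\pr}\big(K_{X^\pr/X} - [\mu^*D]\big) \ = \ 0 \fall j > 0,
\]
and tensoring with the locally free sheaf $\OO_X(K_X + L)$ and applying the projection formula once more, the higher direct images of $\OO_{X^\pr}(K_{X^\pr} + \mu^*L - [\mu^*D])$ also vanish. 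Therefore the Leray spectral sequence degenerates and yields the identification
\[
\HH{i}{X}{\OO_X(K_X + L) \otimes \MI{D}} \ = \ \HH{i}{X^\pr}{\OO_{X^\pr}(K_{X^\pr} + \mu^*L - [\mu^*D])}
\]
for every $i$.

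Next I would apply Kawamata--Viehweg (Theorem \ref{Kawamata.Viehweg.Vanishing}) on $X^\pr$ with integer divisor $\mu^* L$ and $\QQ$-divisor $\mu^* D$. The support of $\mu^* D$ (together with the exceptional locus) is SNC by definition of a log resolution, so hypothesis (ii) is satisfied. For hypothesis (i), note that
\[
\mu^*L \, - \, \mu^*D \ = \ \mu^*(L - D)
\]
is nef because $L - D$ is nef and nefness is preserved under pullback, and it is big because $\big(\mu^*(L - D)\big)^d = (L - D)^d > 0$ by the projection formula, combined with the Lemma characterizing bigness of nef divisors numerically. Hence Kawamata--Viehweg gives
\[
\HH{i}{X^\pr}{\OO_{X^\pr}(K_{X^\pr} + \mu^*L - [\mu^*D])} \ = \ 0 \fall i > 0,
\]
which via the isomorphism above is exactly the statement of the theorem.

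The conceptual heart of the argument is already contained in the Local Vanishing theorem, which is what permits pushing the problem from $X^\pr$ down to $X$ with no loss of information in cohomology; the remaining difficulty is just the bookkeeping above, plus the verification that $\mu^*(L-D)$ remains nef and big --- a standard but essential check, since $\mu$ is only birational rather than finite.
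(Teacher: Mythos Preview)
Your proof is correct and follows essentially the same route as the paper: pass to a log resolution, apply Kawamata--Viehweg to $\mu^*L$ and $\mu^*D$ on $X^\pr$, and use Local Vanishing (together with the projection formula / Leray) to identify the cohomology on $X^\pr$ with that on $X$. The only cosmetic difference is the order of presentation---the paper applies Kawamata--Viehweg first and then pushes down, whereas you set up the Leray identification first---but the content is identical.
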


\begin{proof}[Proof of Theorem \ref{Nadel.Vanishing.Theorem}  (granting Theorems  \ref{Kawamata.Viehweg.Vanishing} and \ref{Local.Van.Mult.Ideals})]
Let $\mu : X^\pr \lra X$ be a log resolution of $D$, and set
\[  L^\pr \ = \ \mu^* L \ \  , \ \ D^\pr \ = \ \mu^* D. \]
Thus $L^\pr - D^\pr$ is a nef and big $\QQ$-divisor on $X^\pr$, and by construction $D^\pr$ has SNC support. Therefore Kawamata--Viehweg applies on $X^\pr$ to give
\begin{equation} \label{KV.On.Blowup.Vanishing}
\HH{i}{X^\pr}{\OO_{X^\pr}(K_{X^\pr} + L^\pr - [D^\pr])} \ = \ 0
\end{equation} 
for $i > 0$. 
Now note that 
\[
K_{X^\pr} + L^\pr - [D^\pr] \  = \ K_{X^\pr/X} - [\mu^* D] + \mu^*(K_X + L). 
\]
On the other hand, one finds using the projection formula and the definition of $\MI{D}$:
\begin{align*}
\mu_* \OO_{X^\pr}\big(K_{X^\pr/X} - [\mu^* D] + \mu^*(K_X + L)\big) &= \mu_* \OO_{X^\pr}\big(K_{X^\pr/X} - [\mu^* D]\big) \otimes\OO_X(K_X + L)\\
&=   \OO_X(K_X + L) \otimes \MI{D}.
\end{align*}
But thanks to Theorem \ref{Local.Van.Mult.Ideals}, the vanishing \eqref{KV.On.Blowup.Vanishing}
 is equivalent to the vanishing 
 \[
 \HH{i}{X}{\OO_X(K_X + L) \otimes \MI{D}} \ =\ 0
 \]
 of the direct image of the sheaf in question, as required.
 \end{proof}

 The proof of Theorem \ref{Local.Van.Mult.Ideals} is similar: one reduces to the case when $X$ is projective, and applies the result of Kawamata and Viehweg in the global setting. See \cite[\S 9.4.A]{PAG} for an account. 
 
 \subsection*{Singularities of Plane Curves and Projective Hypersurfaces}

As a first illustration, we will apply these theorems to prove some classical results about singularities of plane curves and their extensions to hypersurfaces of higher dimension. Starting with a singular hypersurface, the strategy is to build a $\QQ$-divisor having a non-trivial multiplier ideal. Then the vanishing theorems give information about the postulation of the singularities of the original hypersurface. 

 Consider to begin with  a (reduced) plane curve $C \subseteq \PP^2$ of degree $m$, and let
 \[
 \Sigma \ = \ \text{Sing}(C) ,
 \]
 considered as a reduced finite subset of the plane. Our starting point is the classical
 \begin{proposition} 
\label{Sings.of.Curves}
 The set $\Sigma$ imposes independent conditions on curves of degree $k \ge m-2$, i.e.
 \begin{equation} \label{Vanishing.Sing.Curves.Eqn}
\HH{1}{\PP^2}{\II_\Sigma(k) } \ = \ 0 \ \ \text{ for }\ k \ge m-2. 
 \end{equation}
 \end{proposition}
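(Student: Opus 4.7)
The plan is to construct a $\QQ$-divisor $D$ on $\PP^2$ satisfying (i) $\MI{D}\subseteq\II_\Sigma$ with quotient $\II_\Sigma/\MI{D}$ supported on $\Sigma$ (hence zero-dimensional), and (ii) $L-D$ big and nef when $L=(k+3)H$ and $k\ge m-2$. Since $K_{\PP^2}+L=kH$, Nadel Vanishing (Theorem~\ref{Nadel.Vanishing.Theorem}) will then yield $H^1(\PP^2,\OO_{\PP^2}(k)\otimes\MI{D})=0$, and chasing the short exact sequence
\[
0 \to \MI{D}(k) \to \II_\Sigma(k) \to (\II_\Sigma/\MI{D})(k) \to 0,
\]
using that a coherent sheaf supported on a finite set has vanishing $H^1$, will upgrade this to the desired vanishing $H^1(\PP^2,\II_\Sigma(k))=0$.

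For the construction (assuming $m\ge 3$; the case $m=2$ has $|\Sigma|\le 1$ and is elementary), the partial derivatives $F_x,F_y,F_z$ of the defining form of $C$ vanish on $\Sigma$ and have degree $m-1$, so $H^0(\PP^2,\II_\Sigma(m-1))\ne 0$. Fix a sufficiently general $A\in |\II_\Sigma(m-1)|$ (smooth, and meeting $C$ transversely away from $\Sigma$, by Bertini applied to the base-point-free part of the linear system), and set
\[
D \ = \ \tfrac{m-1}{m}\,C \ + \ \tfrac{2}{m}\,A.
\]
At each $x\in\Sigma$ one has $\mult_x C\ge 2$ and $\mult_x A\ge 1$, so
\[
\mult_x D \ \ge \ \tfrac{2(m-1)}{m}+\tfrac{2}{m} \ = \ 2 \ = \ \dim \PP^2,
\]
whence $\MI{D}_x\subseteq\frakm_x$ by Exercise~\ref{Large.Mult.Divisors}. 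Away from $\Sigma$, for generic $A$ the divisor $D$ has SNC support with both coefficients $\tfrac{m-1}{m},\tfrac{2}{m}\in[0,1)$, so $[D]$ is locally $0$ and $\MI{D}$ is the unit ideal. This yields (i).

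For (ii), the $\QQ$-degree of $L-D$ equals
\[
(k+3)-(m-1)-\tfrac{2(m-1)}{m} \ = \ \tfrac{m(k-m+2)+2}{m} \ \ge \ \tfrac{2}{m} \ > \ 0
\]
whenever $k\ge m-2$; since $\PP^2$ has Picard rank one, positive degree implies $\QQ$-ample, so $L-D$ is in particular big and nef. The main technical obstacle in carrying out this plan is the genericity of $A$: for certain $C$ the linear system $|\II_\Sigma(m-1)|$ may have a large base locus or small dimension, and one must argue carefully that $A$ can be chosen so that $\MI{D}$ is trivial on all of $\PP^2\setminus\Sigma$ — equivalently, that $\II_\Sigma/\MI{D}$ is genuinely supported only on $\Sigma$, with no additional contributions from singularities of $A$ itself or from non-transverse intersections with $C$ away from $\Sigma$.
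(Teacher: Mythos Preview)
Your approach is the same idea as the paper's: construct a $\QQ$-divisor $D$ with $\mult_x D \ge 2$ at each $x \in \Sigma$ and all coefficients strictly less than $1$, then invoke Nadel vanishing. The paper takes $D = (1-\eps)C + 2\eps\,\Gamma$ with $0 < \eps \ll 1$ and $\Gamma$ an \emph{arbitrary} reduced curve of some degree $\ell$ through $\Sigma$ sharing no component with $C$; since $\deg D = (1-\eps)m + 2\eps\ell < m+1$ for small $\eps$, the numerics work for $k \ge m-2$. Your version is the special case $\eps = 1/m$ with $\Gamma = A$ a general member of the Jacobian linear system, of degree $m-1$; this gives the same conclusion with a slightly sharper degree estimate.

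Your ``main technical obstacle'' is largely a red herring. You do not need $\MI{D}$ to be trivial on all of $\PP^2\setminus\Sigma$; it suffices that $\MI{D}$ be cosupported on \emph{some} finite set, since then $\II_\Sigma/\MI{D}$ has zero-dimensional support and the map on $H^1$ is still surjective. For this it is enough that every irreducible component of $D$ appears with coefficient $<1$, i.e.\ that $A$ is reduced and shares no component with $C$. Both hold for general $A$: the base locus of the pencil spanned by $F_x,F_y,F_z$ is exactly $\Sigma$ (as $C$ is reduced), so Bertini makes a general member smooth off $\Sigma$; and a general combination $aF_x+bF_y+cF_z$ cannot vanish on a component $C_j$ of $C$, since that would force all three partials to vanish identically on $C_j$, contradicting $C_j \not\subseteq \Sigma$. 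The paper's device of letting $\eps \to 0$ simply sidesteps this entire discussion by choosing $\Gamma$ first (of any convenient degree) and then shrinking $\eps$.
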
 
 \noi Here $\II_\Sigma$ denotes the ideal sheaf of $\Sigma$. We give a proof using Nadel vanishing momentarily, but first we discuss a less familiar extension due to Zariski.
   
 Specifically,  suppose that $C \subseteq \PP^2$ has a certain number of cusps, defined in local analytic coordinates by an equation $x^3 - y^2 =0$. ($C$ may have other singularities as well.) Let 
 \[  \Xi \ = \ \text{Cusps}(C), \]
 again regarded as a reduced finite subset of $\PP^2$. Zariski \cite{Zar} proved that one gets a stronger result for the postulation of $\Xi$. In fact:
 \begin{proposition}\label{Zariski.Cusps}
 One has
 \[
 \HH{1}{\PP^2}{\II_{\Xi}(k)} \ = \ 0 \ \ \text{for} \ \ k >\tfrac{5}{6}m - 3. 
 \]
 \end{proposition}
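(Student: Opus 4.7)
The plan is to apply Nadel vanishing to the $\QQ$-divisor $D = \tfrac{5}{6} C$ on $\PP^2$, and then pass from $\MI{D}$ to $\II_\Xi$ via a short exact sequence. The coefficient $\tfrac{5}{6}$ is chosen to detect exactly the cusps: at each $x \in \Xi$, the curve $C$ is by definition analytically isomorphic near $x$ to $\{u^3 - v^2 = 0\}$ at the origin, and the exercise following Definition \ref{Def.LCT} gives $\lct_0 = \tfrac{5}{6}$ for that singularity, so $\MI{D}_x \subseteq \frakm_x$ at every cusp. On the other hand, away from $\tn{Sing}(C)$ the divisor $D$ already has SNC support with round-down $0$, so $\MI{D} = \OO_{\PP^2}$ there. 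Hence
\[
\MI{D} \ \subseteq \ \II_\Xi, \qquad \text{with quotient supported on the finite set } \tn{Sing}(C).
\]

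Next I would apply Theorem \ref{Nadel.Vanishing.Theorem} with $L = \OO_{\PP^2}(k+3)$, so that $K_{\PP^2} + L \lin \OO_{\PP^2}(k)$. The $\QQ$-divisor $L - D$ has degree $(k+3) - \tfrac{5m}{6}$; this is strictly positive -- hence ample, and in particular nef and big on $\PP^2$ -- precisely when $k > \tfrac{5}{6}m - 3$. Nadel vanishing then gives
\[
\HH{1}{\PP^2}{\OO_{\PP^2}(k) \otimes \MI{D}} \ = \ 0 \qquad \text{for } k > \tfrac{5}{6}m - 3.
\]

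Finally I would twist the short exact sequence
\[
0 \lra \MI{D} \lra \II_\Xi \lra \II_\Xi / \MI{D} \lra 0
\]
by $\OO_{\PP^2}(k)$. Because $\II_\Xi / \MI{D}$ is supported on a zero-dimensional subscheme, its higher cohomology vanishes after any twist; together with the Nadel vanishing above, the associated long exact sequence forces $\HH{1}{\PP^2}{\II_\Xi(k)} = 0$ in the same range.

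The argument is essentially mechanical once the coefficient $\tfrac{5}{6}$ has been identified; there is no serious obstacle. The only nontrivial input -- and the source of Zariski's characteristic slope $\tfrac{5}{6}m - 3$ in place of the classical $m - 3$ from Proposition \ref{Sings.of.Curves} -- is the log canonical threshold computation at a cusp, which is carried out once and for all in the exercise on $x^3 - y^2 = 0$.
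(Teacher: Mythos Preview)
Your argument is correct and matches the paper's proof essentially line for line: set $D=\tfrac{5}{6}C$, use $\lct(\text{cusp})=\tfrac{5}{6}$ to get $\MI{D}\subseteq\II_\Xi$ with finite-length quotient, apply Nadel vanishing for $k>\tfrac{5}{6}m-3$, and conclude via the long exact sequence. (One tangential slip: the bound in Proposition~\ref{Sings.of.Curves} is $k\ge m-2$, not $m-3$; the $m-3$ bound is for irreducible curves, Exercise~\ref{Postulation.Irred.Curves}.)
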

 \noi Interestingly enough, Zariksi proved this by considering the irregularity of the cyclic cover of $\PP^2$ branched along $C$. One can see the Kawamata--Viehweg--Nadel theorem as a vast generalization of this approach.
 
 \begin{proof} [Proof of Proposition \ref{Zariski.Cusps}] This is a direct consequence of Nadel vanishing. In fact, consider the $\QQ$-divisor $D = \tfrac{5}{6}C$.  Since
the log-canonical threshold of a cusp is $= \tfrac{5}{6}$, one has $\MI{D} \subseteq \II_{\Xi}$.  But as
 $C$ is reduced the multiplier ideal $\MI{D}$ is non-trivial only at finitely many points. Thus $\II_{\Xi}/\MI{D}$ is supported on a finite set,  and therefore the map \[  
 \HH{1}{\PP^2}{\OO_{\PP^2}(k) \otimes \MI{D} } \lra  \HH{1}{\PP^2}{\OO_{\PP^2}(k) \otimes \II_\Xi }
  \] 
  is surjective for all $k$. So it suffices to prove that the group on the left vanishes for $k >\tfrac{5}{6}m - 3$. But this follows immediately from Nadel vanishing upon recalling that $\OO_{\PP^2}(K_{\PP^2}) = \OO_{\PP^2}(-3)$.
 \end{proof}
 
 \begin{proof} [Proof of Proposition \ref{Sings.of.Curves}] Here an additional trick is required in order to produce a $\QQ$-divisor whose multiplier ideal vanishes on finite set including $\Sigma = \text{Sing}(C)$. Specifically, fix $0 < \eps \ll 1$, and let $\Gamma$ be a reduced curve of degree $\ell$, not containing any components of $C$, passing through $\Sigma$.   Consider the $\QQ$-divisor
 \[
 D \ = \ (1-\eps)C \, + \, 2\eps  \Gamma. 
  \]
  This has multiplicity $\ge 2$ at each singular point of $C$, and hence $\MI{D}$ vanishes on $\Sigma$ thanks to Exercise \ref{Large.Mult.Divisors}. Moreover $\MI{D}$ is again cosupported on a finite set since no component of $D$ has coefficient $\ge 1$. 
  Therefore, as in the previous proof, it suffices to show that 
  \begin{equation}
  \HH{1}{\PP^2}{\OO_{\PP^2}(k) \otimes \MI{D}} \ = \ 0 
  \tag{*} \end{equation}
  for $k \ge m-2$. But
  \[ \deg D \ = \ (1-\eps)m + 2 \eps \ell \ < \ m+1\]
  for $\eps \ll 1$, and so (*) again follows from Nadel vanishing. 
   \end{proof}
 
 \begin{example}
 When $C$ is the union of $m$ general lines, the bound  $k \ge m-2$ in \eqref{Vanishing.Sing.Curves.Eqn} is the best possible. However  we will see in Exercise \ref{Postulation.Irred.Curves} that one can take $k \ge m-3$ when $C$ is irreducible. \qed \end{example}
 
 Finally, we present a generalization of Proposition \ref{Sings.of.Curves} to higher dimensional hypersurfaces.
 \begin{proposition} \label{Park.Woo.Thm}
 Let $S \subseteq \PP^r$ be a \tn{(}reduced\tn{)} hypersurface of degree $m\ge 3$ having only isolated singularities, and set
 \[  \Sigma \ = \ \textnormal{Sing}(S). \]
 Then $\Sigma$ imposes independent conditions on hypersurfaces of degree $\ge m(r-1) - (2r - 1)$, i.e.
 \[  \HH{1}{\PP^r}{\II_{\Sigma}(k)} \ = \ 0 \ \ \text{for  } \, k \ge m(r-1) - (2r -1). \] \end{proposition}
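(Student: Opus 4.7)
The plan mirrors the proof of Proposition \ref{Sings.of.Curves}: construct an effective $\QQ$-divisor $D$ on $\PP^r$ whose multiplier ideal $\MI{D}$ is cosupported on a finite set containing $\Sigma$, and then apply Nadel vanishing (Theorem \ref{Nadel.Vanishing.Theorem}). The new twist compared to the plane curve case is that Exercise \ref{Large.Mult.Divisors} demands $\mult_x(D) \ge r$ at each $x \in \Sigma$ in order to force $\MI{D}_x \subseteq \frakm_x$, while $S$ alone contributes only multiplicity $\ge 2$ at its singularities. I must therefore import auxiliary hypersurfaces through $\Sigma$ that supply the missing $r-2$ units of multiplicity at each singular point.

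A natural source is the Jacobian. If $f$ is a defining form for $S$, then the partial derivatives $\partial_0 f, \ldots, \partial_r f$ are degree-$(m-1)$ forms vanishing set-theoretically on $\Sigma$. Because $S$ has only isolated singularities, no partial vanishes identically on a component of $S$; therefore general members of the linear system $V \subseteq H^0(\PP^r, \II_\Sigma(m-1))$ they span are reduced, share no components with $S$ or with one another, and have multiplicity exactly $1$ at every point of $\Sigma$. I would choose $r-1$ such general members $T_1, \ldots, T_{r-1}$ and set
\[
D \;=\; (1-\eps)\,S \;+\; \alpha\,\bigl(T_1 + \cdots + T_{r-1}\bigr),
\]
with $\eps > 0$ small and $\alpha \in (0,1)$ chosen so that $2(1-\eps) + (r-1)\alpha \ge r$. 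Then $\mult_x(D) \ge r$ at each $x \in \Sigma$, which forces $\MI{D}_x \subseteq \frakm_x$ via Exercise \ref{Large.Mult.Divisors}; at the same time no component of $D$ has coefficient $\ge 1$, so $\II_\Sigma/\MI{D}$ is a skyscraper sheaf. Consequently the natural map
\[
H^1\bigl(\PP^r,\, \OO_{\PP^r}(k) \otimes \MI{D}\bigr) \;\twoheadrightarrow\; H^1\bigl(\PP^r,\, \OO_{\PP^r}(k) \otimes \II_\Sigma\bigr)
\]
is surjective for every $k$, so it will suffice to kill the left-hand group.

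The closing move is numerical: compute $\deg D = (1-\eps)m + \alpha(r-1)(m-1)$, substitute the optimal $\alpha$, and verify that for every $k \ge m(r-1) - (2r-1)$ the $\QQ$-divisor $\OO_{\PP^r}(k+r+1) - D$ has positive degree, hence is ample on $\PP^r$. Nadel vanishing then finishes the argument. The main obstacle is the sharp balancing at the threshold: one must use exactly $r-1$ auxiliary hypersurfaces of precisely degree $m-1$ in order to keep every coefficient of $D$ strictly below $1$ while simultaneously meeting the multiplicity bound at $\Sigma$ and the degree inequality $\deg D < k + r + 1$ at the stated critical value of $k$; the number $r+1$ of available partial derivatives is just enough to permit this choice.
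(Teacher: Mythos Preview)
Your overall strategy matches the paper's, but the numerical claim in the final paragraph is wrong, and the argument fails at exactly the critical value $k = m(r-1)-(2r-1)$. With the minimal admissible $\alpha = \tfrac{r-2+2\eps}{r-1}$ one has
\[
\deg D \ = \ (1-\eps)m + (r-2+2\eps)(m-1) \ = \ m(r-1) - (r-2) + \eps(m-2),
\]
whereas at the threshold $k+r+1 = m(r-1)-(r-2)$. Hence $\deg D > k+r+1$ for every $\eps>0$ (since $m\ge 3$), so Nadel vanishing does not apply; your construction only yields the statement for $k \ge m(r-1)-(2r-1)+1$, off by one. Setting $\eps=0$ is not allowed, since then the coefficient of $S$ equals $1$ and $\MI{D}\subseteq\OO_{\PP^r}(-S)$ is no longer cosupported on a finite set. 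Using a different number of general Jacobian divisors does not help either: only the total weight $(r-1)\alpha$ enters both constraints, so the tradeoff between multiplicity and degree is unchanged.

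The missing ingredient --- and the reason for the hypothesis $m\ge 3$ --- is a lemma of Park--Woo: for each $P_i\in\Sigma$ there exists a \emph{special} (non-generic) member $\Gamma_i$ of the Jacobian linear series $\Lambda$ with $\mult_{P_i}(\Gamma_i)\ge 2$. (When $m=2$ the partials are linear forms and this is impossible.) The paper then takes
\[
D \ = \ (1-\eps)S \ + \ \eps\sum_{i=1}^{t}\Gamma_i \ + \ \frac{r-2-\eps(t-1)}{\ell}\sum_{j=1}^{\ell}A_j
\]
with $A_j\in\Lambda$ general and $\ell\gg 0$. The point of the second summand is that it shifts weight $\eps$ from $S$ (degree $m$, multiplicity $\ge 2$ at $P_i$) to $\Gamma_i$ (degree $m-1$, still multiplicity $\ge 2$ at $P_i$): the multiplicity bound $\mult_{P_i}(D)\ge r$ is preserved while the degree drops. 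One computes $\deg D = m(r-1)-(r-2)-\eps < k+r+1$, and Nadel vanishing now goes through at the sharp bound.
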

\noi When $r = 3$ the statement was given by Severi. The general case, as well as the proof that follows, is due to Park and Woo \cite{PW}. 

\begin{proof} [Proof of Proposition \ref{Park.Woo.Thm}] We may suppose that $r \ge 3$, in which case the hypotheses imply that $S$ is irreducible. Write
\[  \Sigma \ = \ \big \{ P_1, \ldots, P_t \big\}, \]
and denote by $\Lambda \subseteq \linser{\OO_{\PP^r}(m-1)}$ the linear series  spanned by the partial derivatives of a defining equation of $S$; observe that every divisor in $\Lambda$ passes through the points of $\Sigma$.  For each $P_i \in \Sigma$, there exists a divisor $\Gamma_i \in \Lambda$ with $\mult_{P_i}(\Gamma_i) \ge 2$.\footnote{This uses that $m \ge 3$: see \cite[Lemma 3.2]{PW}.} Then for $0 < \eps \ll 1$ and $\ell \gg 0$, set
\[
D \ = \ (1-\eps)S \, + \, \eps \cdot \sum_{i = 1}^{t} \Gamma_i \, + \, \frac{\big(r-2-\eps(t - 1)\big)}{\ell}\cdot \sum_{j=1}^{\ell} A_j,
\]
where $A_1, \ldots, A_\ell \in \Lambda$ are general divisors. As $S$ is irreducible, none of the $\Gamma_i$ or $A_j$ occur as components of $S$, and therefore $\MI{D}$ is cosupported on a finite set provided that $\eps \ll 1$ and $t \gg 0$.  One has
\[
\mult_{P_i}(D) \ \ge \ (2-2\eps) \, + \, \eps(t+ 1) \, + \, \big( (r-2) - \eps(t- 1) \big) \ \ge \ r,
\]
which guarantees that $\MI{D} \subseteq \II_\Sigma$. Moreover:
\begin{align*}
\deg(D) \ &= \ m(1 - \eps) \, + \, \eps t(m-1) \, + \, \big( (r-2) - \eps(t - 1)\big)(m-1) \\ &< \ m(r-1) - (r-2),
\end{align*}
and so the required vanishing follows from Theorem \ref{Nadel.Vanishing.Theorem}. 
\end{proof}

\subsection*{Singularities of Theta Divisors}

We next discuss a theorem of Koll\'ar concerning the singularities of theta divisors.

Let $(A, \Theta)$ be a principally polarized abelian variety (PPAV) of dimension $g$. Recall that by definition this means that $A = \CC^g / \Lambda$ is a $g$-dimensional complex torus, and $\Theta \subseteq A$ is an ample divisor with the property that 
\[
\hh{0}{A}{\OO_A(\Theta)} \ = \ 1. 
\]
The motivating example historically is the polarized Jacobian $(JC, \Theta_C)$ of a smooth projective curve of genus $g$.

In their classical work \cite{AM}, Andreotti and Meyer showed that Jacobians are generically characterized among all PPAV's by the condition that $\dim \text{Sing} (\Theta) \ge g - 4$.\footnote{The precise statement is that the Jacobians form an irreducible component of the locus of all $(A, \Theta)$ defined by the stated condition.} In view of this, it is interesting to ask what singularities can occur on theta divisors. Koll\'ar used vanishing for $\QQ$-divisors to prove a very clean statement along these lines.

Koll\'ar's result is the following:
\begin{theorem}
The pair $(A, \Theta)$ is log-canonical. In particular,
\[ \mult_x(\Theta) \ \le \ g \] for every $x \in A$.
\end{theorem}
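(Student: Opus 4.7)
The plan is to argue by contradiction using Nadel vanishing twisted by elements of $\tn{Pic}^0(A)$. Suppose $(A,\Theta)$ is not log-canonical; then by Definition \ref{KLT.LC.Def} the multiplier ideal $\JJ = \MI{(1-\eps)\Theta}$ is a proper ideal sheaf for some $0 < \eps \ll 1$. Two features peculiar to this setting drive the argument. First, $K_A = 0$, so the canonical twist in Nadel vanishing disappears; second, for every $\alpha \in \tn{Pic}^0(A)$ one has $h^0(A,\OO_A(\Theta) \otimes \alpha) = 1$, with the unique section vanishing along a translate of $\Theta$ via the principal polarization isomorphism $\phi_\Theta : A \xrightarrow{\sim} \tn{Pic}^0(A)$.

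For any $\alpha \in \tn{Pic}^0(A)$ the class $\eps\Theta + \alpha$ is ample (numerically trivial twists preserve ampleness), so Theorem \ref{Nadel.Vanishing.Theorem} applied with integer divisor $\Theta + \alpha$ and $\QQ$-divisor $(1-\eps)\Theta$, together with $K_A = 0$, yields
\[
\HH{i}{A}{\JJ \otimes \OO_A(\Theta) \otimes \alpha} \ = \ 0 \fall i > 0.
\]
In particular $h^0$ equals the Euler characteristic $\chi$, and this is constant as $\alpha$ varies in the connected group $\tn{Pic}^0(A)$ since tensoring with a numerically trivial line bundle preserves $\chi$.

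Next I would evaluate $h^0$ for generic $\alpha$. Writing $\alpha = \phi_\Theta(a)$, the unique section of $\OO_A(\Theta) \otimes \alpha$ cuts out the translate $\Theta - a$ of $\Theta$. Fix any point $x$ in the (nonempty) cosupport of $\JJ$: the set of $a \in A$ with $x \in \Theta - a$ is just $\Theta - x$, a proper subvariety of $A$, so for $a$ in a dense open subset the translate $\Theta - a$ avoids $x$. For such $\alpha$ the unique section of $\OO_A(\Theta) \otimes \alpha$ is a unit in $\OO_{A,x}$ and cannot belong to the proper ideal $\JJ_x$. Hence $h^0(A,\JJ \otimes \OO_A(\Theta) \otimes \alpha) = 0$ for generic $\alpha$, and by the rigidity established above this vanishing persists for every $\alpha$, in particular for $\alpha = 0$.

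This contradicts the fact that the defining section $\theta$ of $\OO_A(\Theta)$ itself lies in $H^0(A,\JJ \otimes \OO_A(\Theta))$: if $f$ is a local equation of $\Theta$, then $|f|^2/|f|^{2(1-\eps)} = |f|^{2\eps}$ is continuous and hence locally integrable, so $f \in \MIan{(1-\eps)\Theta} = \JJ$ by Proposition \ref{Analy.vs.Alg.MI}. Thus $(A,\Theta)$ must be log-canonical. The bound $\mult_x(\Theta) \le g$ then follows from Exercise \ref{Large.Mult.Divisors}: if $\mult_x(\Theta) \ge g+1$, then for $\eps$ sufficiently small $\mult_x((1-\eps)\Theta) \ge g = \dim A$, forcing $\JJ$ to be non-trivial at $x$ and contradicting log-canonicity. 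The main technical obstacle is the generic-miss argument in the third paragraph; making it rigorous requires the surjectivity of $\phi_\Theta$ and the identification of sections of $\OO_A(\Theta) \otimes \alpha$ with translates of $\theta$, both standard facts about PPAVs but worth spelling out.
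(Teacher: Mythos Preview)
Your proof is correct and is essentially the paper's argument: both derive a contradiction from Nadel vanishing on $A$ (using $K_A=0$), the inclusion $\OO_A(-\Theta)\subseteq\MI{(1-\eps)\Theta}$, and the fact that a generic translate of $\Theta$ misses a given point of the zero scheme. The only cosmetic difference is that the paper passes to the quotient subscheme $Z$ and invokes semicontinuity of $h^0(\OO_Z(\Theta+a))$ as $a\to 0$, whereas you stay with the ideal $\JJ$ and use constancy of $\chi(\JJ\otimes\OO_A(\Theta)\otimes\alpha)$ across $\tn{Pic}^0(A)$; via the ideal exact sequence these are dual formulations of the same propagation step.
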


\begin{proof}
Suppose to the contrary that $\MI{(1-\eps)\Theta} \ne \OO_A$ for some $\eps > 0$. We will derive a contradiction from Nadel vanishing. To this end, let $Z \subseteq A$ denote the subscheme defined by $\MI{(1-\eps)\Theta}$. Then $Z \subseteq \Theta$: this is clear set-theoretically, but in fact it holds on the level of schemes thanks to the inclusion
\[
\OO_A(-\Theta) \ = \ \MI{  \Theta} \ \subseteq \ \MI{(1-\eps)\Theta}. 
\]
Now consider the short exact sequence
\[
0 \lra \OO_A(\Theta) \otimes \MI{(1-\eps)\Theta} \lra \ \OO_A(\Theta) \lra \OO_Z(\Theta) \lra 0
\]
The $H^1$ of the term on the left vanishes thanks to Theorem \ref{Nadel.Vanishing.Theorem} and the fact that $K_A = 0$. Therefore the map
\[
\HH{0}{A}{\OO_A(\Theta) } \lra \HH{0}{Z}{\OO_Z(\Theta)}
\]
is surjective. On the other hand, the unique section of $\OO_A(\Theta)$ vanishes on $Z$, and so we conclude that
\begin{equation}
\HH{0}{Z}{\OO_Z(\Theta)} \ = \ 0.
\tag{*}
\end{equation}
To complete the proof, it remains only to show that (*) cannot hold. To this end, let $a \in A$ be a general point. Then $\Theta + a$ meets $Z$ properly, and hence $\HH{0}{Z}{\OO_Z(\Theta +a)} \ne 0$. Letting $a \to 0$, if follows by semicontinuity that
\[ \HH{0}{Z}{\OO_Z(\Theta)} \ \ne \ 0,
\] as required. 
\end{proof}

\begin{remark}
In the situation of the theorem, the fact that $(A, \Theta)$ is log-canonical implies more generally that  the locus 
\[
\Sigma_k(\Theta) \ =_{\text{def}} \ \big \{ x \in A\,  \big    | \, \mult_x \Theta \ge k \big \}
\]
 of $k$-fold points of $\Theta$ has codimension $\ge k$ in $A$. Equality is achieved when 
 \[ (A, \Theta) \ = \ (A_1, \Theta_1) \, \times \, \ldots \, \times \, (A_k , \Theta_k) \]
is the product of $k$ smaller PPAV's. It was established in \cite{STIV} that this is the only situation in which $\codim_A \Sigma_k(\Theta) = k$. It was also shown in that paper that if $\Theta$ is irreducible, then $\Theta$ is normal with rational singularities. \qed
\end{remark}

\subsection*{Uniform Global Generation}

As a final application, we prove a useful result to the effect that sheaves of the form $\OO(L) \otimes \MI{D}$, where $D$ is a $\QQ$-divisor numerically equivalent to $L$, become globally generated after twisting by a fixed divisor. This was first observed by Esnault and Viewheg, and later rediscovered independently by Siu. The statement plays an important role in the extension theorems of Siu discussed in Lecture 5.

The theorem for which we are aiming is the following:
\begin{theorem} \label{Unif.Gl.Gen.Thm}
Let $X$ be a smooth projective variety of dimension $d$. There exists a divisor $B$ on $X$ with the following property:
\begin{itemize}
\item For any divisor $L$ on $X$; and
\item For any effective $\QQ$-divisor $D \num L$,
\end{itemize}
the sheaf 
$\OO_X(L + B) \otimes \MI{D}$is globally generated.
\end{theorem}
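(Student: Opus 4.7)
The plan is to combine Nadel vanishing with Castelnuovo--Mumford regularity to exhibit a $B$ that depends only on $X$. First I would fix once and for all a very ample divisor $H$ on $X$ and set
\[ B \ = \ K_X + (d+1)H. \]
By Mumford's regularity criterion, showing that $\FF := \OO_X(L+B) \otimes \MI{D}$ is globally generated reduces to verifying that $\FF$ is $0$-regular with respect to $H$, that is,
\[ \HH{i}{X}{\FF \otimes \OO_X(-iH)} \ = \ 0 \]
for every $i \ge 1$. For $i > d$ these groups vanish on dimension grounds, so the only content is at $i = 1, \ldots, d$.

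For each such $i$, I would rewrite
\[ \FF \otimes \OO_X(-iH) \ = \ \OO_X\bigl(K_X + M_i\bigr) \otimes \MI{D}, \]
where $M_i := L + (d+1-i)H$ is an integral divisor. Since $D \num L$, one has $M_i - D \num (d+1-i)H$, and as the coefficient $(d+1-i)$ is a positive integer for $1 \le i \le d$, the divisor $(d+1-i)H$ is ample; being numerically equivalent to an ample divisor, $M_i - D$ is nef and big. Nadel vanishing (Theorem \ref{Nadel.Vanishing.Theorem}), applied to the integral divisor $M_i$ and the effective $\QQ$-divisor $D$, then yields the required vanishing, and the theorem follows.

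The conceptual content of the argument is that the hypothesis $D \num L$ causes the \emph{a priori} unknown divisor $L$ to cancel against $D$ at the level of numerical classes when one tests the positivity hypothesis in Nadel vanishing, leaving only the uniform ample contribution $(d+1-i)H$ to do the work; this is precisely what permits $B$ to be chosen independently of both $L$ and $D$. The bound $d+1$ on the multiple of $H$ is forced so that every twist $\FF \otimes \OO_X(-iH)$ for $1 \le i \le d$ still retains a strictly positive multiple of $H$ in its Nadel input. There is no serious obstacle once these two ingredients are in place; the argument is really just an organizational one.
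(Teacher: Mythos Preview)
Your proof is correct and follows exactly the approach of the paper: take $B = K_X + (d+1)H$ for a very ample divisor $H$, then apply Nadel vanishing to verify the hypotheses of the Castelnuovo--Mumford lemma. You have simply written out in detail what the paper states in one sentence.
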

\noi Note that the hypothesis implies that $L$ is $\QQ$-effective, i.e. that $\HH{0}{X}{\OO_X(mL)} \ne 0$ for some $m \gg 0$.  The crucial point  is that $B$ is independent of the choice of $L$ and $D$. 

\begin{corollary}
There is a fixed divisor $B$ on any smooth variety $X$ with the property that
\[ \HH{0}{X}{\OO_X(L+B)} \ \ne \ 0 \]
for any big \tn{(}or even $\QQ$-effective\tn{)} divisor $L$ on $X$. \qed\end{corollary}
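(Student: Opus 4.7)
The plan is to deduce the corollary directly from Theorem \ref{Unif.Gl.Gen.Thm} by taking the same divisor $B$ whose existence is asserted there, and then producing, for each $\QQ$-effective $L$, an effective $\QQ$-divisor $D$ numerically equivalent to $L$ to which the theorem can be applied.

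First I would unwind the $\QQ$-effectivity hypothesis: if $L$ is $\QQ$-effective then some positive integer multiple $mL$ has a nonzero section, so one can pick an effective divisor $E \in \Linser{mL}$ and set $D = \tfrac{1}{m}E$. Then $D$ is an effective $\QQ$-divisor with $D \Qlin L$, and in particular $D \num L$, which is exactly the input needed for Theorem \ref{Unif.Gl.Gen.Thm}. Applying that theorem produces the global generation of $\OO_X(L+B)\otimes \MI{D}$.

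The second step is to extract a nonzero section. Since $\MI{D}$ is a nonzero coherent ideal sheaf (it agrees with $\OO_X$ at every point outside the support of $D$ and more generally wherever $D$ is log-smooth), the coherent sheaf $\OO_X(L+B)\otimes \MI{D}$ is nonzero at a generic point. A globally generated coherent sheaf which is nonzero at some point must have a nonzero global section. That section sits inside $\HH{0}{X}{\OO_X(L+B)\otimes \MI{D}} \subseteq \HH{0}{X}{\OO_X(L+B)}$ via the inclusion $\MI{D}\hookrightarrow \OO_X$, giving the desired nonvanishing.

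There really is no hard step here, since the whole content has been packaged into Theorem \ref{Unif.Gl.Gen.Thm}; the only mild point to verify is the generic triviality of $\MI{D}$, which follows because a log resolution of $D$ is an isomorphism over the complement of a proper closed subset and the defining formula $\MI{D} = \mu_*\OO_{X^\pr}(K_{X^\pr/X} - [\mu^*D])$ then reduces to $\OO_X$ there. The point worth emphasizing in the write-up is that $B$ is chosen once and for all, independent of $L$, which is what makes the conclusion uniform.
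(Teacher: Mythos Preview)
Your argument is correct and is exactly the intended one: the paper states the corollary with a bare \qed, treating it as immediate from Theorem~\ref{Unif.Gl.Gen.Thm}, and your unwinding---choose $D = \tfrac{1}{m}E$ for some $E \in \linser{mL}$, apply the theorem, and observe that a globally generated sheaf which is generically nonzero has a nonzero section---is precisely the implicit reasoning.
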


The Theorem is actually an immediate consequence of Nadel vanishing and the elementary  lemma of Castelnuovo--Mumford:
\begin{lemma}[Castelnouvo--Mumford] Let $\FF$ be a coherent sheaf on a projective variety $X$, and let $H$ be a basepoint-free ample divisor on $X$. Assume that
\[  \HH{i}{X}{\FF \otimes \OO_X(-iH)} \ = \ 0 \ \ \text{for } \ i > 0. \]
Then $\FF$ is globally generated.
\end{lemma}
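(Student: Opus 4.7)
The plan is to prove this Castelnuovo--Mumford-style statement by induction on $d = \dim X$. The base case $d = 0$ is immediate, since $X$ is then a finite scheme and any coherent sheaf on it is a finite-dimensional vector space, hence generated by its global sections.

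For the inductive step, I would use a general member of $|H|$ to drop the dimension. Since $|H|$ is basepoint-free, a general $H_0 \in |H|$ contains none of the finitely many associated subvarieties of $\FF$, so its defining section $s \in \HH{0}{X}{\OO_X(H)}$ is a non-zero-divisor on $\FF$, yielding the short exact sequence
\[
0 \lra \FF \otimes \OO_X(-H) \lra \FF \lra \FF|_{H_0} \lra 0.
\]
Twisting by $\OO_X(-iH)$ and passing to cohomology, the vanishings at indices $i$ and $i+1$ sandwich $\HH{i}{H_0}{\FF|_{H_0} \otimes \OO_{H_0}(-iH)}$ between zero groups, so $\FF|_{H_0}$ satisfies the analogous hypothesis on $H_0$ with respect to the basepoint-free ample divisor $H|_{H_0}$. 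By induction, $\FF|_{H_0}$ is globally generated.

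Next, the $i=1$ case of the hypothesis gives $\HH{1}{X}{\FF \otimes \OO_X(-H)} = 0$, so the restriction map $\HH{0}{X}{\FF} \to \HH{0}{H_0}{\FF|_{H_0}}$ is surjective. For any $x \in H_0$, the local equation of $H_0$ lies in $\frakm_x$, so $(\FF|_{H_0})_x \otimes k(x) = \FF_x \otimes k(x)$; combined with global generation of $\FF|_{H_0}$, this shows that global sections of $\FF$ surject onto $\FF_x \otimes k(x)$. Nakayama's lemma then forces $\FF_x$ itself to be generated by these global sections, so $\FF$ is globally generated at every $x \in H_0$.

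To extend this to all of $X$, I would argue that the non-globally-generated locus is closed, and by basepoint-freeness of $|H|$ every point of $X$ lies on some member of $|H|$; varying $H_0$ then delivers global generation everywhere. The main obstacle is the underlying genericity step: for an arbitrary $x \in X$, one needs an $H_0 \in |H|$ through $x$ that still satisfies the non-zero-divisor condition. This reduces to showing that the hyperplane $V_x \subseteq |H|$ of divisors through $x$ is not contained in any of the finitely many proper subloci $V_Y \subseteq |H|$ of divisors containing an associated subvariety $Y$ of $\FF$. Ampleness of $H$ enters essentially here, as it makes the morphism $X \to |H|^\ast$ finite and so prevents a positive-dimensional $Y$ from being collapsed to the image of $x$; the remaining cases amount to a straightforward dimension count in $|H|$.
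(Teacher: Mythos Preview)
The paper itself does not prove this lemma; it simply cites \cite[\S 1.8]{PAG}. Your approach --- induction on $\dim X$ via a general hyperplane section --- is the standard one and is essentially what appears in \cite{PAG}, so the overall strategy is fine.

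There is, however, a genuine gap in your final paragraph. For a zero-dimensional associated subvariety $Y = \{y\}$, the locus $V_Y \subseteq |H|$ is the hyperplane $V_y$ of divisors through $y$, which has the \emph{same} dimension as $V_x$. Thus $V_x \subseteq V_y$ is equivalent to $V_x = V_y$, i.e.\ to $\phi(x) = \phi(y)$ under the finite morphism $\phi : X \to |H|^\ast$. Finiteness of $\phi$ does not prevent this: $\phi$ need not separate points. So for those finitely many $x$ with $\phi(x) = \phi(y_i)$ for some embedded point $y_i$ of $\FF$, \emph{no} member of $|H|$ through $x$ avoids all associated subvarieties, and your ``straightforward dimension count'' fails. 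The argument as written therefore leaves global generation unproved at finitely many points.

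The classical remedy, and the route actually taken in \cite[Theorem 1.8.5]{PAG}, is not to vary $H_0$ at all. Instead one first uses the same hyperplane-section induction to prove that $0$-regularity propagates (so $\FF$ is $m$-regular for all $m \ge 0$) and that the multiplication maps
\[
\HH{0}{X}{\FF} \otimes \HH{0}{X}{\OO_X(kH)} \lra \HH{0}{X}{\FF \otimes \OO_X(kH)}
\]
are surjective for $k \ge 0$. Global generation of $\FF$ then follows by choosing $k \gg 0$ so that $\FF(kH)$ is globally generated (Serre) and untwisting via the surjective multiplication map. This bypasses the need to pass a good divisor through every point of $X$.
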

\noi We refer to \cite[\S 1.8]{PAG} for the proof.

\begin{proof} [Proof of Theorem \ref{Unif.Gl.Gen.Thm}] As above, let $ d = \dim X$. 
It sufficies to take $B = K_X + (d +1)H$ for a very ample divisor $H$, in which case Theorem \ref{Nadel.Vanishing.Theorem} gives the vanishings required for the Castelnuovo--Mumford lemma.
\end{proof}


\section{Local Properties of Multiplier Ideals}

In this Lecture we will discuss some local properties of multiplier ideals. First we take up the restriction theorem:  here we emphasize the use of adjoint ideals, whose importance has lately come into focus. The remaining sections deal with the subadditivity and Skoda theorems. As an application  of the latter, we give a down-to-earth discussion of the recent  results of \cite{LazLee} concerning syzygetic properties of multiplier ideals. 

\subsection*{Adjoint Ideals and the Restriction Theorem}

Let $X$ be a smooth complex variety, let $D$ be an effective $\QQ$-divisor on $X$, and let $S \subseteq X$ be a smooth irreducible divisor, not contained in any component of $D$. Thus the restriction $D_S$ of $D$ to $S$ is a well-defined $\QQ$-divisor on $S$.

There are now two multiplier-type ideals one can form on $S$. First, one can take the multiplier ideal $\MI{X,D}$ of $D$ on $X$, and then restrict this ideal to $S$. On the other hand, one can form the multiplier ideal $\MI{S, D_S}$ on $S$ of the restricted divisor $D_S$. In general, these two ideals are different:
\begin{example}
Let $X = \CC^2$, let $S$ be the $x$-axis, and let $A = \{y - x^2 = 0\}$ be a parabola tangent to $S$. If $D = \tfrac{1}{2}A$, then
\[
\MI{X, D} \ = \ \OO_X \ \ , \ \ \MI{S, D_S} \ = \ \OO_S(-P),
\]
where $P \in S$ denotes the origin. \qed \end{example}

However a very basic fact is that  there is a containment between these two ideal sheaves on $S$. \begin{theorem}[Restriction Theorem] 
\label{Restriction.Theorem}
One has an inclusion
\[ \MI{S, D_S} \ \subseteq \ \MI{X,D} \cdot \OO_S. \]
\end{theorem}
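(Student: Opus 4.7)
My plan is to prove the restriction theorem via the adjoint ideal, which is the natural bridge between the multiplier ideals on $X$ and on $S$. Choose a common log resolution $\mu : X' \to X$ of $(X, S+D)$ such that the proper transform $S' = \mu_*^{-1}S$ is smooth and $\mu|_{S'} : S' \to S$ is itself a log resolution of $(S, D_S)$. I would then introduce the \emph{adjoint ideal} along $S$ by
\[
\tn{Adj}_S(X, D) \ = \ \mu_*\OO_{X'}\bigl(K_{X'/X} - [\mu^*D] - (\mu^*S - S')\bigr).
\]
Since $\mu^*S - S'$ is effective and $\mu$-exceptional, a small calculation shows that $\tn{Adj}_S(X, D) \subseteq \MI{X, D}$ as ideal sheaves in $\OO_X$.

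The heart of the argument is the short exact sequence obtained by tensoring $0 \to \OO_{X'}(-S') \to \OO_{X'} \to \OO_{S'} \to 0$ with $\OO_{X'}(K_{X'/X} - [\mu^*D] - \mu^*S + S')$ and pushing forward by $\mu$. The $R^1\mu_*$ of the leftmost term vanishes by the local vanishing theorem (Theorem \ref{Local.Van.Mult.Ideals}), so the resulting sequence
\[
0 \ \lra \ \MI{X, D}\otimes \OO_X(-S) \ \lra \ \tn{Adj}_S(X, D) \ \lra \ \mu_*\OO_{S'}\bigl((K_{X'/X} - [\mu^*D] + S')|_{S'}\bigr) \ \lra \ 0
\]
is exact, where I have used the projection formula to identify the first term. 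Adjunction on $X' \supseteq S'$ gives $(K_{X'/X} + S' - \mu^*S)|_{S'} = K_{S'/S}$, and since $S$ is not contained in the support of $D$ (so $\mu^*D$ restricts nicely to $S'$ compatibly with round-down thanks to the SNC hypothesis), the third term is precisely $\MI{S, D_S}$.

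To finish, the surjection $\tn{Adj}_S(X, D) \twoheadrightarrow \MI{S, D_S}$ identifies $\MI{S, D_S}$ with the image of $\tn{Adj}_S(X, D)$ in $\OO_S$, i.e.\ $\MI{S, D_S} = \tn{Adj}_S(X, D)\cdot \OO_S$. Combining this with the inclusion $\tn{Adj}_S(X, D) \subseteq \MI{X, D}$ noted above yields $\MI{S, D_S} \subseteq \MI{X, D}\cdot \OO_S$, as desired.

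The main obstacle is the bookkeeping that produces the adjoint exact sequence: one must verify carefully that the restriction $[\mu^*D]|_{S'}$ really equals $[(\mu|_{S'})^*D_S]$ (this uses the SNC assumption and that $S$ shares no component with $D$), and that the adjunction identity $(K_{X'/X} + S' - \mu^*S)|_{S'} = K_{S'/S}$ holds as an equality of actual divisors, not merely a linear equivalence. Everything else is formal: the vanishing of $R^1\mu_*$ and the projection formula. Once these identifications are in place, the restriction theorem falls out immediately from the exact sequence.
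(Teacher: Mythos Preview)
Your proposal is correct and follows essentially the same route as the paper: define the adjoint ideal $\tn{Adj}_S(X,D) = \mu_*\OO_{X'}(K_{X'/X} - [\mu^*D] - R)$ with $R = \mu^*S - S'$, establish the adjoint exact sequence via local vanishing and adjunction, and combine the resulting identification $\tn{Adj}_S(X,D)\cdot\OO_S = \MI{S,D_S}$ with the inclusion $\tn{Adj}_S(X,D) \subseteq \MI{X,D}$. The paper records the slightly sharper inclusion $\tn{Adj}_S(X,D) \subseteq \MI{X, D + (1-\eps)S}$ for all $0 < \eps \le 1$, but your $\eps = 1$ case is exactly what is needed for the Restriction Theorem.
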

\noi This result is  perhaps the most important local property of multiplier ideals. 
In the analytic perspective, it comes from the Osahawa--Takegoshi extension theorem: an element in the ideal on the left is a function on $S$ satisfying an integrability condition, and Osahawa--Takegoshi guarantees that it is the restriction of a function satisfying the analogous integrability condition on $X$. 

We will prove Theorem \ref{Restriction.Theorem}
 by constructing and studying the \textit{adjoint ideal} $\Adj{S}{X, D}$ of $D$ along $S$. This is an ideal sheaf on $X$ that governs the multiplier ideal $\MI{S, D_S}$ of the restriction $D_S$ of $D$ to $S$.
\begin{theorem}\label{Adj.Ideal.Thm}
With hypotheses as above, there exists an ideal sheaf
\[ \Adj{S}{X,D}\ \subseteq \ \OO_X\] sitting in an exact sequence:
\begin{equation} 
\label{Adj.Exact.Seq}
0 \lra \MI{X,D} \otimes \OO_X(-S) \overset{\cdot S}  \lra \Adj{S}{X,D} \lra \MI{S, D_S} \lra 0. \end{equation}
Moreover, for any $0 < \eps \le 1$\tn{:}
\begin{equation} \label{Adj.Ideal.Inclusion}
\Adj{S}{X,D}\ \subseteq \ \MI{X, D + (1-\eps)S}. \end{equation}
\end{theorem}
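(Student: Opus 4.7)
The plan is to construct $\Adj{S}{X,D}$ directly on a log resolution and then read both assertions off from divisor-coefficient bookkeeping. First I would choose a single log resolution $\mu : X' \lra X$ of $(X, D + S)$ whose strict transform $S' = \mu^{-1}_{*}S$ is smooth and meets the other components of $\mu^{*}D + \exc(\mu)$ transversally. Writing $\mu^{*}S = S' + M$, the divisor $M$ is effective and exceptional. Define
\[
\Adj{S}{X, D} \ = \ \mu_{*}\OO_{X'}\bigl(K_{X'/X} - [\mu^{*}D] - M\bigr).
\]
To see this is a subsheaf of $\OO_X$, note that $K_{X'/X} - M = K_{X'} + S' - \mu^{*}(K_X + S)$ is the log-relative canonical for the log-smooth pair $(X, S)$, which is effective and exceptional; hence $\mu_{*}$ of its $\OO_{X'}$ gives an ideal. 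Independence of the resolution can then be verified by the now-standard argument (dominate two resolutions by a third and check invariance under further blow-up), parallel to Lemma 1.6 in the text.

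Next I would extract the exact sequence \eqref{Adj.Exact.Seq} from the tautological sequence
\[
0 \lra \OO_{X'}(-S') \lra \OO_{X'} \lra \OO_{S'} \lra 0
\]
tensored with $\mathcal{L} := \OO_{X'}(K_{X'/X} - [\mu^{*}D] - M)$. On the left we obtain $\OO_{X'}(K_{X'/X} - [\mu^{*}D] - \mu^{*}S)$, whose pushforward is $\MI{X,D} \otimes \OO_X(-S)$ by the projection formula. On the right, adjunction gives $K_{S'/S} = (K_{X'/X} - M)|_{S'}$, and the SNC transversality ensures $[\mu^{*}D]|_{S'} = [(\mu|_{S'})^{*}D_S]$; therefore $\mathcal{L}|_{S'}$ computes $\MI{S, D_S}$ upon pushing down by $\mu|_{S'}$. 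Surjectivity of the right-hand map on $X$ comes from the vanishing $R^{1}\mu_{*}\OO_{X'}(K_{X'/X} - [\mu^{*}D] - \mu^{*}S) = 0$, which is the Local Vanishing Theorem \ref{Local.Van.Mult.Ideals} after pulling out $\OO_X(-S)$ via the projection formula. This identification of the cokernel is the step I expect to be the main obstacle, since it requires both the adjunction computation on $S'$ and the vanishing on $X'$ to mesh cleanly; everything else is essentially formal.

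Finally, for the inclusion \eqref{Adj.Ideal.Inclusion} I would compare the defining divisors on $X'$. Fix $0 < \eps \le 1$ and compute
\[
[\mu^{*}(D + (1-\eps)S)] \ = \ [\mu^{*}D + (1-\eps)S' + (1-\eps)M].
\]
Component-by-component on the SNC support, the coefficient along $S'$ is $1 - \eps < 1$ (since $S \not\subseteq \Supp D$), which rounds to $0$; along each exceptional $E \subseteq M$ with $\mu^{*}D$-coefficient $d_E$ and $M$-coefficient $m_E \in \NN$, we have $[d_E + (1-\eps)m_E] \le [d_E + m_E] = [d_E] + m_E$; other components are untouched. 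It follows that
\[
K_{X'/X} - [\mu^{*}(D + (1-\eps)S)] \ \ge \ K_{X'/X} - [\mu^{*}D] - M,
\]
so applying $\mu_{*}$ gives $\Adj{S}{X,D} \subseteq \MI{X, D + (1-\eps)S}$, as required.
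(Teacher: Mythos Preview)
Your proposal is correct and follows essentially the same route as the paper's proof: the paper defines $\Adj{S}{X,D} = \mu_*\OO_{X'}(K_{X'/X} - [\mu^*D] - R)$ with $\mu^*S = S' + R$ (your $M$), obtains \eqref{Adj.Exact.Seq} by pushing forward the sequence $0 \to \OO_{X'}(B - S') \to \OO_{X'}(B) \to \OO_{S'}(B_{S'}) \to 0$ using adjunction and local vanishing exactly as you describe, and verifies \eqref{Adj.Ideal.Inclusion} by the same coefficient comparison $[\mu^*D] + R \succcurlyeq [\mu^*D + (1-\eps)R + (1-\eps)S']$. Your write-up is in fact slightly more explicit on a couple of points (why the sheaf is an ideal, and the component-by-component check of the inequality) that the paper leaves implicit.
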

 
The sequence \eqref{Adj.Exact.Seq} shows that
\[  \Adj{S}{X,D} \cdot \OO_S \ = \ \MI{S, D_S}. \]
Therefore \eqref{Adj.Ideal.Inclusion} not only yields the Restriction Theorem, it implies that in fact
\[
\MI{S, D_S} \ \subseteq \ \MI{X, D + (1-\eps)S}\cdot \OO_S
\]
for any $0 < \eps \le 1$.

Before proving Theorems \ref{Restriction.Theorem} and \ref {Adj.Ideal.Thm},  we record some consequences.

\begin{corollary} \label{Higher.Codi.Restr.Cor}
Let $Y \subseteq X$ be a smooth subvariety not contained in the support of $D$, so that the restriction $D_Y$ of $D$ to $Y$ is defined. Then
\[  \MI{Y, D_Y} \ \subseteq \ \MI{X,D} \cdot \OO_Y. \]
\end{corollary}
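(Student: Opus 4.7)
The plan is to induct on the codimension $c = \codim_X Y$, where the base case $c = 1$ is precisely the Restriction Theorem (Theorem \ref{Restriction.Theorem}). Granting the statement for all smaller codimensions, I would reduce matters to a single application of Theorem \ref{Restriction.Theorem} at an intermediate smooth hypersurface $S$ with $Y \subseteq S \subseteq X$, after which the inductive hypothesis applied to $Y \subseteq S$ finishes the job.

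Since the assertion is local on $X$, I may work in a Zariski neighborhood of an arbitrary point $y \in Y$. There I would choose local coordinates in which $Y = \{x_1 = \cdots = x_c = 0\}$ and introduce the smooth hypersurface
\[ S \ = \ \{ x_1 + \lambda_2 x_2 + \cdots + \lambda_c x_c = 0 \} \]
for generic $\lambda_i \in \CC$. Genericity ensures that $S$ coincides with none of the finitely many components of $D$, so the restricted divisor $D_S$ is a well-defined $\QQ$-divisor on $S$; moreover $Y \subseteq S$ is smooth of codimension $c - 1$ in $S$, and the hypothesis $Y \not\subseteq \Supp(D)$ forces $Y \not\subseteq \Supp(D_S)$. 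With this setup both hypotheses of the inductive step are in force.

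Putting the pieces together, the Restriction Theorem yields $\MI{S, D_S} \subseteq \MI{X, D} \cdot \OO_S$, while the inductive hypothesis applied to the pair $(Y \subseteq S, D_S)$ yields $\MI{Y, D_Y} = \MI{Y, (D_S)_Y} \subseteq \MI{S, D_S} \cdot \OO_Y$. Composing these inclusions (i.e.\ further restricting the first from $S$ down to $Y$) produces
\[ \MI{Y, D_Y} \ \subseteq \ \MI{S, D_S} \cdot \OO_Y \ \subseteq \ \MI{X, D} \cdot \OO_Y, \]
as required. The principal technical point -- in truth a rather mild one -- is the construction of the auxiliary $S$: one must verify that for generic $\lambda$ the hyperplane $S$ is smooth, contains $Y$, and is not a component of $D$. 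All three properties are evident, the last being the genericity statement that exploits only the finiteness of the component set of $D$; beyond this step the argument is a formal combination of Theorem \ref{Restriction.Theorem} with induction.
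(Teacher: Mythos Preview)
Your argument is correct and is precisely the approach the paper has in mind: the paper's entire proof is the parenthetical remark that the corollary ``follows inductively from the restriction theorem since $Y$ is locally a complete intersection in $X$,'' and your construction of the intermediate hypersurface $S$ makes this explicit. One small simplification: the genericity of $\lambda$ is not actually needed to guarantee that $S$ is not a component of $D$, since any $S$ containing $Y$ automatically avoids the components of $D$ by the hypothesis $Y \not\subseteq \Supp(D)$.
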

\noi (This follows inductively from the restriction theorem since $Y$ is locally a complete intersection in $X$.) \qed

\begin{corollary}
In the situation of the Theorem, assume that $\MI{S,D_S}$ is trivial at a point $x \in S$. Then $\MI{X, D+ (1 - \eps)S}$ \tn{(}and hence also $\MI{X,D}$\tn{)} are trivial at $x$. \qed
\end{corollary}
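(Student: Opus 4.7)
The plan is to extract the conclusion directly from Theorem \ref{Adj.Ideal.Thm}: triviality of $\MI{S,D_S}$ at $x$ should force triviality of the adjoint ideal $\Adj{S}{X,D}$ at $x$, and then the inclusion \eqref{Adj.Ideal.Inclusion} does the rest. No further geometry is needed.

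First I would localize the exact sequence \eqref{Adj.Exact.Seq} at $x \in S$. Let $s \in \OO_{X,x}$ be a local equation of $S$, so that $\OO_X(-S)_x$ is generated by $s$. The stalks fit into
\[
0 \lra s\cdot \MI{X,D}_x \lra \Adj{S}{X,D}_x \lra \MI{S,D_S}_x \lra 0,
\]
where the surjection on the right is reduction modulo $s$. By hypothesis $1 \in \MI{S,D_S}_x$, so there exists $f \in \Adj{S}{X,D}_x$ with $f \equiv 1 \pmod{s}$. Since $s$ lies in the maximal ideal $\frakm_x \subseteq \OO_{X,x}$, any such $f$ is a unit, and hence $\Adj{S}{X,D}_x = \OO_{X,x}$.

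Now I invoke \eqref{Adj.Ideal.Inclusion}: for any $0 < \eps \le 1$,
\[
\Adj{S}{X,D} \ \subseteq \ \MI{X,D + (1-\eps)S}.
\]
Triviality of the left-hand side at $x$ therefore forces triviality of $\MI{X, D+(1-\eps)S}$ at $x$. For the parenthetical claim, note that enlarging the boundary only shrinks the multiplier ideal, so
\[
\MI{X, D + (1-\eps)S} \ \subseteq \ \MI{X,D},
\]
and consequently $\MI{X,D}$ is trivial at $x$ as well.

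There is no real obstacle here: the entire content has been packaged into Theorem \ref{Adj.Ideal.Thm}, and the only thing to verify by hand is that a lift of $1$ through the stalk surjection $\Adj{S}{X,D}_x \twoheadrightarrow \MI{S,D_S}_x$ is automatically a unit in $\OO_{X,x}$, which is immediate because the kernel $s\cdot\MI{X,D}_x$ lies in $\frakm_x$.
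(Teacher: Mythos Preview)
Your argument is correct and is precisely the reasoning the paper intends: the Corollary carries a bare \qed, and your proof simply unpacks Theorem \ref{Adj.Ideal.Thm} together with the displayed inclusion $\MI{S,D_S} \subseteq \MI{X,D+(1-\eps)S}\cdot\OO_S$ that the paper records just before stating the Corollary. The only nontrivial step---that triviality of the restricted ideal forces triviality of the ambient one via a Nakayama-type lift of $1$---is exactly what you make explicit, so there is nothing to correct or add.
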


\begin{exercise}
If $D$ is an effective $\QQ$-divisor on $X$ such that $\mult_x(D) < 1$, then $\MI{X,D}$ is trivial at $x$. (Using the previous corollary, take hyperplane sections to reduce to the case $\dim X = 1$, where the result is clear.) \qed
\end{exercise}

\begin{proof}[Proof of Theorem \ref{Adj.Ideal.Thm}] Let $\mu : X^\pr \lra X$ be a log resolution of $(X, D+S)$, and denote by $ S^\pr \subseteq X^\pr$ the proper transform of $S$, so that in particular $\mu_S : S^\pr \lra S$ is a log resolution of $(S, D_S)$. Write
\[  \mu^* S \ = \ S^\pr + R, \]
and put $B   =   K_{X^\pr / X}- [ \mu^* D]  - R$.
We define:
\[ \Adj{S}{X,D} \ = \ \mu_* \OO_{X^\pr} \big( B). \]
To establish the exact sequence \eqref{Adj.Exact.Seq}, note first that
\[   K_S \ \lin \ \big( K_X + S\big)_{|S}  \ \ \ , \ \ \  K_{S^\pr} \ \lin \ \big( K_{X^\pr} + S^\pr \big)_{|S^\pr} ,
\]
and hence\[   K_{S^\pr/S} \ = \ \big( K_{X^\pr/X} - R \big)_{|S^\pr}. \]
(One can check that this holds on the level of divisors, and not only for linear equivalence classes.) On $X^\pr$, where the relevant divisors have SNC support, rounding commutes with restriction. Therefore
\begin{align*}
\MI{S,D_S} \ &= \ \mu_{S,*} \OO_{S^\pr}(K_{S^\pr/S} - [\mu_S^* D_S]) \\
&= \ \mu_*\OO_{S^\pr}( B_{S^\pr} ).
\end{align*}
Observing that 
\[
B - S^\pr \ = \ K_{X^\pr/X} - [\mu^*D] - \mu^*S,\]
the  adjoint exact sequence \eqref{Adj.Exact.Seq} follows by pushing forward
\[
0 \lra \OO_{X^\pr}(B - S^\pr) \lra \ \OO_{X^\pr}(B) \lra \OO_{S^\pr}(B_{S^\pr}) \lra 0
\]
  since the higher direct images of the term on the left vanish thanks to local vanishing. Finally, note that
  \begin{align*}
  B \ = \ K_{X^\pr/X} - [\mu^* D] - R \ &\preccurlyeq  \ K_{X^\pr/X} -\big[\mu^* D + (1-\eps) R\big]  \\ &= \ K_{X^\pr/X} -\big[\mu^* D + (1-\eps) R+ (1 - \eps)S^\pr\big],
  \end{align*}
  which yields \eqref{Adj.Ideal.Inclusion}. 
\end{proof}

\begin{remark}
We leave it to the reader to show that $\Adj{S}{X,D}$ is independent of the choice of log resolution.  \qed
\end{remark}

 If $\fra \subseteq \OO_X$ is an ideal that does not vanish identically on $S$, then for $c> 0$ one can define in the analogous manner an adjoint ideal \[ \Adj{S}{X, \fra^c} \ \subseteq \ \OO_X\] sitting in exact sequence
\[
0 \lra \MI{X,\fra^c}\otimes \OO_X(-S) \lra \Adj{S}{X, \fra^c} \lra \MI{S, (\fra _S)^c} \lra 0,
\]
where $\fra_S =_{\text{def}} \fra \cdot \OO_S$ is the restriction of $\fra$ to $S$. 
In particular, the analogue of the restriction theorem holds for the multiplier ideals associated to $\fra$.

Finally, Theorem \ref{Adj.Ideal.Thm} works  perfectly well if $S$ is allowed to be singular, as in Remark \ref{Mult.Ideal.Sing.Var.Remark}. Since in any event $S$ is Gorenstein, when in addition it is normal there is no question about the meaning of the multiplier ideals on $S$ appearing in the Theorem.  In this case the statement and proof of  \ref{Adj.Ideal.Thm} remain valid without change. For arbitrary $S$ one can twist by $\OO_X(K_X + S)$ and rewrite \eqref{Adj.Exact.Seq} as
\begin{multline} \label{SIngular.Adjoint.Sequence}
0 \lra \OO_X(K_X) \otimes \MI{X, D} \lra \OO_X(K_X + S) \otimes \Adj{S}{X,D} \\\lra  \mu_* \big( \OO_{S^\pr}(K_{S^\pr}) \otimes \MI{S^\pr , D_S^\pr} \big) \lra 0,
\end{multline}
where $D_{S^\pr} = \mu^*D_S$ denotes the pullback of $D_S$ to $S^\pr$.  
When $D=0$ this is the adjoint exact sequence appearing in \cite{STIV} and \cite[Section 9.3.E]{PAG}.

We conclude with some exercises for the reader.

\begin{exercise}[Irreducible plane curves] \label{Postulation.Irred.Curves}
Let $C \subseteq \PP^2$ be an irreducible (reduced) plane curve of degree $m$, and as in Proposition  \ref{Sings.of.Curves} put $\Sigma = \text{Sing}(C)$. Then the points of $\Sigma$ impose independent conditions on curves of degree $\ge m - 3$. (Let $f : C^\pr \lra C$ be the desingularization of $C$, and use  the adjoint sequence
\[
0 \lra \OO_{\PP^2}(-3) \lra \OO_{\PP^2}(m-3)\otimes \tn{Adj}_C  \lra f_* \OO_{C^\pr}(K_{C^\pr}) \lra 0
\]
coming from \eqref{SIngular.Adjoint.Sequence} with $D = 0$ to show that
\[
\HH{1}{\PP^2}{ \OO_{\PP^2}(k) \otimes \tn{Adj}_C} \ = \ 0
\]
when $k \ge m-3$.) \qed \end{exercise}

\begin{example}[Condition for an embedded point] It is sometimes interesting to know that a multiplier ideal has an embedded prime ideal. As usual, let $X$ be a smooth variety of dimension $d$, and fix a point $x \in X$ with maximal ideal $\frakm$. Consider an effective $\QQ$-divisor $D$ with integral multiplicity $s = \mult_x(D) \ge d$, and denote by $\overline D$ the effective $\QQ$-divisor of degree $s$ on $\PP(T_xX) = \PP^{d-1}$ arising in the natural way as the ``projectivized tangent cone" of $D$ at $x$. The following proposition asserts that if there is a hypersurface of degree $s - d$ on $\PP^{d-1}$ vanishing along the multiplier ideal $\MI{\PP^{d-1}, \overline{D}}$, then $\MI{X, D}$  has an embedded point at  $x$:
\begin{propositionn}
If \[
\HHH{0}{\PP^{d-1}}{\MI{\PP^{d-1}, \overline{D} }\otimes \OO_{\PP^{d-1}}(s - d)}\ \ne \ 0, \]
then $\frakm$ is an associated prime of $\MI{X,D}$.\end{propositionn}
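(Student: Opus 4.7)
The plan is to work on the blow-up $\pi : \tilde X \to X$ of $X$ at the point $x$, whose exceptional divisor $E$ is canonically identified with $\PP(T_xX) = \PP^{d-1}$. Writing $\pi^*D = \tilde D + sE$ with $\tilde D$ the strict transform, one has by construction $\tilde D\big|_E = \overline D$. Using $K_{\tilde X/X} = (d-1)E$ combined with the projection formula applied to any log resolution $\tilde\mu : \tilde X' \to \tilde X$ of $(\tilde X, \tilde D+E)$, I would first establish the clean identity
\[
\MI{X,D} \ = \ \pi_*\bigl(\MI{\tilde X,\tilde D}\otimes\OO_{\tilde X}(-(s-d+1)E)\bigr).
\]
Introducing the enlarged ideal sheaf
\[
\frb \ =_{\text{def}} \ \pi_*\bigl(\MI{\tilde X,\tilde D}\otimes\OO_{\tilde X}(-(s-d)E)\bigr) \ \supseteq \ \MI{X,D},
\]
the containment $\frakm\cdot\frb \subseteq \MI{X,D}$ is immediate from $\frakm\cdot\OO_{\tilde X} = \OO_{\tilde X}(-E)$ together with the tautological inclusion $\frakm\cdot\pi_*F \subseteq \pi_*(\pi^*\frakm\cdot F)$. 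Hence it will suffice to show that $\frb$ properly contains $\MI{X,D}$ in a neighborhood of $x$.

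To compute the quotient, I would tensor the structure sequence $0\to\OO_{\tilde X}(-E)\to\OO_{\tilde X}\to\OO_E\to 0$ with $\MI{\tilde X,\tilde D}\otimes\OO_{\tilde X}(-(s-d)E)$; since $E$ is Cartier and the sheaf is torsion-free, no $\Tor$-term appears, and using $\OO_{\tilde X}(-E)\big|_E = \OO_E(1)$ the resulting short exact sequence on $\tilde X$ becomes
\[
0 \lra \MI{\tilde X,\tilde D}(-(s-d+1)E) \lra \MI{\tilde X,\tilde D}(-(s-d)E) \lra \bigl(\MI{\tilde X,\tilde D}\cdot\OO_E\bigr)\otimes\OO_E(s-d) \lra 0.
\]
The heart of the argument is the vanishing $R^1\pi_*\bigl(\MI{\tilde X,\tilde D}(-(s-d+1)E)\bigr) = 0$. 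To see this, pulling up via $\tilde\mu$ and applying the projection formula turns the leftmost sheaf into $\OO_{\tilde X'}(K_{\tilde X'/X}-[\mu^*D])$ where $\mu = \pi\circ\tilde\mu$ is a log resolution of $D$; two applications of Theorem \ref{Local.Van.Mult.Ideals} --- once for $\tilde\mu$ as a log resolution of $\tilde D$, and once for $\mu$ as a log resolution of $D$ --- combined via the Leray spectral sequence, force the required $R^1$ to vanish. Pushing down then yields the exact sequence
\[
0 \lra \MI{X,D} \lra \frb \lra \pi_*\bigl((\MI{\tilde X,\tilde D}\cdot\OO_E)\otimes\OO_E(s-d)\bigr) \lra 0,
\]
in which the right-hand term is a skyscraper at $x$ with stalk $H^0\bigl(E,(\MI{\tilde X,\tilde D}\cdot\OO_E)(s-d)\bigr)$.

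Finally, the Restriction Theorem \ref{Restriction.Theorem} applied to the smooth divisor $E\subset\tilde X$ gives $\MI{E,\overline D}\subseteq \MI{\tilde X,\tilde D}\cdot\OO_E$, so the hypothesis $H^0(\PP^{d-1},\MI{\PP^{d-1},\overline D}(s-d))\ne 0$ produces a nonzero element in the skyscraper stalk; hence $(\frb/\MI{X,D})_x\ne 0$, and being annihilated by $\frakm$ it exhibits $\frakm$ as an associated prime of $\MI{X,D}$. The main technical obstacle is the $R^1$-vanishing step: one has to dominate the blow-up by a log resolution of $(\tilde X,\tilde D+E)$ and then cleanly combine the projection formula with local vanishing both on $\tilde X$ and on $X$, exploiting that the twist by $-(s-d+1)E$ is the pullback of a line bundle and so commutes with higher direct images.
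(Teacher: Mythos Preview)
Your argument is correct and follows the same overall strategy as the paper: blow up at $x$, identify $\MI{X,D}$ as the push-forward of a suitable twist on the blow-up, and produce a strictly larger ideal whose quotient is a nonzero skyscraper at $x$. The only real difference is in how the skyscraper is manufactured. The paper uses the \emph{adjoint} ideal $\Adj{E}{X^\prime,D^\prime}$ along the exceptional divisor: twisting the adjoint exact sequence \eqref{Adj.Exact.Seq} by $\OO_{X^\prime}((d-s)E)$ and pushing down gives a sequence whose right-hand term is \emph{exactly} $H^0\big(E,\MI{E,\overline D}(s-d)\big)$, so the hypothesis is used directly. You instead work with the ordinary multiplier ideal $\MI{\tilde X,\tilde D}$, obtaining the larger restricted ideal $\MI{\tilde X,\tilde D}\cdot\OO_E$ on the right, and then invoke the Restriction Theorem to embed $\MI{E,\overline D}$ into it. In effect you have unpacked by hand what the adjoint construction packages: both the $R^1$-vanishing (which in the paper is built into the proof of Theorem~\ref{Adj.Ideal.Thm}) and the comparison with $\MI{E,\overline D}$. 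Your route is slightly more elementary in that it avoids introducing $\Adj{E}{}$, while the paper's is cleaner and illustrates why adjoint ideals are convenient.
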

\noi Observe that the statement is interesting only when $\MI{X,D}$ is not itself cosupported at $x$.

For the proof, let $\pi : X^\pr \lra X$ be the blowing up of $x$, with exceptional divisor $E = \PP^{d-1}$, so that $K_{X^\pr/X} = (d-1)E$.  Write $D^\pr$ for the proper transform of $D$, so that $D^\pr \num \pi^* D - sE$, and note that $D_E = \overline D$. Now consider the adjoint sequence for $D^\pr$:
\[
0 \lra \MI{X^\pr, D^\pr}\otimes \OO_{X^\pr}(-E) \lra \Adj{E}{X^\pr, D^\pr}\lra \MI{\PP^{d-1}, \overline{D}}\lra 0, \]
and twist through by $\OO_{X^\pr}\big((d-s)E\big)$. The resulting term on the left pushes down with vanishing higher direct images to $\MI{X,D}$ thanks to Lemma \ref {Birat.Transf.Rule}. One finds an exact sequence having the shape
\[
0 \lra \MI{X, D} \lra \mathcal{A} \lra  \HHH{0}{E}{\MI{E, \overline D} \otimes \OO_E((d-s)E)} \lra 0,\]
where $\mathcal{A}$ is an ideal on $X$, and the vector space on the right is viewed as a sky-scraper sheaf supported at $x$. But the hypothesis of the Proposition is exactly that this vector space is non-zero, and the assertion follows. \qed
\end{example}

\subsection*{The Subadditivity Theorem}

The Restriction Theorem was applied in \cite{DEL} to prove a result asserting that the multiplier ideal of a product of two ideals must be at least as deep as the product of the corresponding multiplier ideals. This will be useful at a couple of points in the sequel. 

We start by defining ``mixed" multiplier ideals:
\begin{definition} \label{Def.Mixed.Mult.Ideals}
Let $\fra, \frb\subset \OO_X$ be two ideal sheaves. Given $c, e \ge 0$, the multiplier ideal
\[  \MI{\fra^c \cdot \frb^e} \ \subseteq \ \OO_X \]
is defined by taking a common log resolution $\mu : X^\pr \lra X$ of $\fra$ and $\frb$, with
\[
\fra \cdot \OO_{X^\pr}  \ = \ \OO_{X^\pr}(-A) \ \ , \ \ \frb \cdot \OO_{X^\pr}  \ = \ \OO_{X^\pr}(-B)
\]for divisors $A, B$ with SNC support, and setting
\[ 
\MI{\fra^c\cdot \frb^e} \ = \ \mu_* \OO_{X^\pr}\big(K_{X^\pr/X} - [ cA + eB]\big). \ \  \qed
\]
\end{definition}
The subadditivity theorem compares these mixed ideals to the multiplier ideals of the two factors.
\begin{theorem}[Subadditivity Theorem]
\label{Subadditivity.Theorem}
One has an inclusion
\[ 
\MI{\fra^c \cdot \frb^e} \ \subseteq \ \MI{\fra^c} \cdot \MI{\frb^e}.
\] 
\end{theorem}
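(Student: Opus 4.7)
The plan is to deduce subadditivity from the Restriction Theorem by a standard trick: move to the product $X \times X$, form multiplier ideals that factor as a ``box product'' of the two factors, and then restrict to the diagonal $\Delta \subseteq X \times X$, on which the external product $p_1^{-1}\fra \cdot p_2^{-1}\frb$ restricts to $\fra \cdot \frb$.

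More precisely, let $p_1, p_2 : X \times X \lra X$ be the two projections, let $\Delta \subseteq X\times X$ be the diagonal (smooth of codimension $d = \dim X$), and set
\[
\fra' \ = \ p_1^{-1}\fra \cdot \OO_{X\times X} \ \ , \ \ \frb' \ = \ p_2^{-1}\frb \cdot \OO_{X\times X}.
\]
Since $\Delta \cong X$ meets the zero loci of $\fra'$ and $\frb'$ properly, one has $\fra' \cdot \OO_\Delta = \fra$ and $\frb' \cdot \OO_\Delta = \frb$. By the higher-codimension restriction theorem (Corollary \ref{Higher.Codi.Restr.Cor}), applied to the ideal-sheaf version of Theorem \ref{Restriction.Theorem} noted at the end of the adjoint ideal section, one obtains
\[
\MI{X , \fra^c \cdot \frb^e} \ = \ \MI{\Delta, (\fra' \cdot \frb')^c \ldots}|_\Delta \ \subseteq \ \MI{X\times X, (\fra')^c \cdot (\frb')^e} \cdot \OO_\Delta.
\]

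The crucial calculation is a K\"unneth-type product formula
\[
\MI{X\times X, (\fra')^c \cdot (\frb')^e} \ = \ \MI{X,\fra^c} \boxtimes \MI{X,\frb^e},
\]
where the right-hand side denotes the external product ideal $p_1^{-1}\MI{\fra^c} \cdot p_2^{-1}\MI{\frb^e}$. To prove this, take separate log resolutions $\mu : X' \lra X$ of $\fra$ and $\nu : X'' \lra X$ of $\frb$, with $\fra \cdot \OO_{X'} = \OO_{X'}(-A)$ and $\frb \cdot \OO_{X''} = \OO_{X''}(-B)$. Then $\mu \times \nu : X' \times X'' \lra X\times X$ is a log resolution of $\fra' \cdot \frb'$, and the pullbacks of $A$ and $B$ to $X'\times X''$ have SNC support meeting in the product pattern of two transverse families of divisors. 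Consequently, the round-down commutes with the box-product decomposition,
\[
[c \cdot p_1^* A + e \cdot p_2^* B] \ = \ p_1^*[cA] + p_2^*[eB],
\]
and $K_{X'\times X''/X\times X} = p_1^* K_{X'/X} + p_2^* K_{X''/X}$. The product formula then follows by applying $(\mu \times \nu)_*$ and invoking the K\"unneth formula for coherent sheaves together with Definition \ref{Def.Alg.Mult.Ideals}.

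Restricting this product formula to $\Delta \cong X$ gives $\MI{\fra^c} \cdot \MI{\frb^e}$, which combined with the restriction inclusion above yields the theorem. The main obstacle is the product formula: what must be checked carefully is that $\mu \times \nu$ is indeed a log resolution of the combined data and that the integer-part operation commutes with the box-product decomposition of the pulled-back divisors; both points rest on the SNC structure of $A$ and $B$ and the fact that $p_1^*A$ and $p_2^*B$ share no components on $X' \times X''$.
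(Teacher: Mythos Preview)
Your argument is correct and follows essentially the same route as the paper's own proof: pass to $X\times X$, establish the K\"unneth-type product formula for the mixed multiplier ideal of $(p_1^{-1}\fra)^c\cdot(p_2^{-1}\frb)^e$, and then apply the higher-codimension restriction theorem to the diagonal. You in fact supply more detail than the paper's sketch (the product log resolution $\mu\times\nu$ and the observation that $[c\,p_1^*A + e\,p_2^*B] = p_1^*[cA]+p_2^*[eB]$ since $p_1^*A$ and $p_2^*B$ share no components), so aside from a garbled display in the restriction inclusion your write-up is fine.
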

\noi Similarly,
$\MI{X, D_1 + D_2} \subseteq \MI{X, D_1} \cdot \MI{X, D_2} $
for any two effective $\QQ$-divisors $D_1, D_2$ on $X$

\begin{proof}[Sketch of Proof of Theorem \ref{Subadditivity.Theorem}]
Consider the product $X \times X$ with projections
\[ p_1\, , \, p_2 \, : \, X \times X \lra X. \]
The first step is to show via the K\"unneth formula that
\begin{equation}
\MI{X \times X, (p_1^* \fra)^c\cdot (p_2^* \frb)^e} \ = \ p_1^* \MI{X, \fra^c} \ \cdot \ p_2^* \MI{X, \frb^e}.
\tag{*}
\end{equation}
The one simply restricts to the diagonal $\Delta = X \subseteq X \times X$ using Corollary \ref{Higher.Codi.Restr.Cor}. Specifically, 
\begin{align*}
\MI{X , \fra^c \cdot \frb^e } \ &= \ \MI{\Delta,((p_1^* \fra)^c\cdot (p_2^* \frb)^e)_{|\Delta}} \\
&\subseteq \  \MI{X \times X, (p_1^* \fra)^c\cdot (p_2^* \frb)^e}_{| \Delta} \\
& = \  \MI{X, \fra^c} \cdot \MI{X, \frb^e},
\end{align*}
the last equality coming from (*).
\end{proof}

\begin{exercise}
Let $f : Y \lra X$ be a morphism, and let $D$ be a $\QQ$-divisor on $X$ whose support does not contain the image of $Y$. Then
\[  \MI{Y, f^*D}  \ \subseteq \ f^{-1} \MI{X, D}. \ \qed \]
\end{exercise}

\subsection*{Skoda's Theorem}

We now discuss Skoda's theorem, which computes the multiplier ideals associated to powers of an ideal. 

Let $X$ be a smooth variety of dimension $d$, and let $\fra, \frb \subseteq \OO_X$ be ideal sheaves. In its simplest form, Skoda's theorem is the following:
\begin{theorem} [Skoda's Theorem] 
\label{Skoda.Thm.Statement}
Assume that  $m \ge d$. Then
\[  \MI{\fra^m \cdot \frb^c} \ = \ \fra \cdot \MI{\fra^{m-1} \cdot \frb^c}\]
for any $c \ge 0$. 
\end{theorem}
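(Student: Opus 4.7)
The containment $\fra\cdot \MI{\fra^{m-1}\frb^c}\subseteq \MI{\fra^m\frb^c}$ is immediate from the algebraic definition. On a common log resolution $\mu:X'\to X$ of $\fra\cdot\frb$ with $\fra\cdot\OO_{X'}=\OO_{X'}(-A)$ and $\frb\cdot\OO_{X'}=\OO_{X'}(-B)$, local sections $f\in\fra$ and $h\in\MI{\fra^{m-1}\frb^c}$ have $\mu^*(fh)$ lying in $\OO_{X'}(K_{X'/X}-A-[(m-1)A+cB])$; because $A$ is integral, this sheaf equals $\OO_{X'}(K_{X'/X}-[mA+cB])$, so pushing down places $fh$ in $\MI{\fra^m\frb^c}$.

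The substantive direction $\MI{\fra^m\frb^c}\subseteq \fra\cdot \MI{\fra^{m-1}\frb^c}$ will be obtained by constructing a Koszul-type (``Skoda'') complex on a log resolution whose push-forward is exact and ends in a surjection $V\otimes \MI{\fra^{m-1}\frb^c}\twoheadrightarrow \MI{\fra^m\frb^c}$ given by multiplication with generators of $\fra$. Working locally, first reduce to the case where $\fra$ is generated by $d=\dim X$ elements $g_1,\dots,g_d$: the analytic spread of any ideal on a smooth $d$-fold is at most $d$, so $\fra$ admits a reduction $\fra_0\subseteq\fra$ generated by $d$ elements with $\overline{\fra_0}=\overline{\fra}$, and multiplier ideals depend only on the integral closure.

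On a log resolution $\mu:X'\to X$ of $\fra\cdot\frb$, the pullbacks $\mu^*g_i$ then define a surjection $\phi:V\otimes \OO_{X'}\twoheadrightarrow \OO_{X'}(-A)$ with $V=\CC^d$, equivalently a nowhere-vanishing section of the rank-$d$ bundle $V^*\otimes\OO_{X'}(-A)$. The associated Koszul complex
\[
0\to \wedge^d V\otimes \OO_{X'}(dA)\to \wedge^{d-1}V\otimes \OO_{X'}((d-1)A)\to \cdots \to V\otimes \OO_{X'}(A)\to \OO_{X'}\to 0
\]
is therefore exact on $X'$. Twist by $\OO_{X'}(K_{X'/X}-mA-[cB])$: because $A$ is integral, the $j$-th term from the right becomes $\wedge^j V\otimes\OO_{X'}(K_{X'/X}-[(m-j)A+cB])$ for $j=0,\dots,d$. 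Since $m\ge d\ge j$, the divisor $(m-j)A+cB$ is effective with SNC support, so local vanishing (Theorem \ref{Local.Van.Mult.Ideals}) kills $R^{>0}\mu_*$ of every term. Pushing the exact complex forward therefore preserves exactness and yields
\[
\wedge^d V\otimes \MI{\fra^{m-d}\frb^c}\to \cdots\to V\otimes \MI{\fra^{m-1}\frb^c}\xrightarrow{(a_i)\mapsto\sum a_ig_i}\MI{\fra^m\frb^c}\to 0,
\]
and surjectivity of the last arrow is precisely the required inclusion.

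The critical step is the invocation of local vanishing in the push-forward: the hypothesis $m\ge d$ is exactly what keeps each $(m-j)A$ effective as $j$ ranges through the complex, so that Theorem \ref{Local.Van.Mult.Ideals} applies uniformly to every wedge level. Without this bound some intermediate $R^1\mu_*$ could be non-zero and right-exactness at the last spot would fail. The reduction to $d$ generators, while standard, plays the complementary role of tailoring the length of the Koszul complex to match precisely the numerical threshold provided by local vanishing.
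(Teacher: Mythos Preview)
Your argument is correct and follows essentially the same route as the paper: build the Koszul complex on a log resolution from $d$ elements whose pullbacks generate $\OO_{X'}(-A)$, twist, apply local vanishing to kill higher direct images, and push forward to obtain the exact Skoda complex whose terminal surjection gives the nontrivial inclusion. The only cosmetic difference is that the paper produces the $d$ elements by choosing them generically in $\fra$ and invoking a dimension count (after shrinking $X$) to ensure they generate $\OO_{X'}(-A)$, whereas you appeal to reduction theory; the paper's subsequent Remark makes explicit that these are the same thing.
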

\noi Note that it follows that $\MI{\fra^m \frb^c} = \fra^{m+1-d}\MI{\fra^{d-1} \frb^c}$.  

The algebraic proof of the theorem -- as in \cite{ELNull} or \cite[\S 10.6]{PAG} -- actually yields a more general statement that in turn has some interesting consequences. 
Before stating this, we record an elementary observation, whose proof we leave to the reader:
\begin{lemma} \label{Inclusion.Lemma}
For any $\ell, k \ge 0$ there is an inclusion
\[
\fra^k \cdot \MI{\fra^\ell \cdot \frb^c} \ \subseteq \ \MI{\fra^{k + \ell} \cdot \frb^c}. \qed
\]
\end{lemma}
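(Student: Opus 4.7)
The plan is to verify the inclusion on the log resolution and then push down. Fix a common log resolution $\mu : X^{\pr} \lra X$ of $\fra$ and $\frb$, writing
\[
\fra\cdot\OO_{X^{\pr}} \,=\, \OO_{X^{\pr}}(-A)\,,\quad \frb\cdot\OO_{X^{\pr}} \,=\, \OO_{X^{\pr}}(-B),
\]
with $A+B+\text{Exc}(\mu)$ of SNC support. Then $A$ and $B$ are \emph{integral} effective divisors, and $\fra^{k}\cdot\OO_{X^{\pr}} = \OO_{X^{\pr}}(-kA)$, so in particular $\fra^{k} \subseteq \mu_{*}\OO_{X^{\pr}}(-kA)$.

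The key arithmetic observation is that since $kA$ has integer coefficients, rounding is additive with respect to it: for each prime divisor $E_{i}$ on $X^{\pr}$,
\[
k\cdot\ord_{E_{i}}(A) \,+\, \bigl[\ell\cdot\ord_{E_{i}}(A) + c\cdot\ord_{E_{i}}(B)\bigr] \;=\; \bigl[(k+\ell)\cdot\ord_{E_{i}}(A) + c\cdot\ord_{E_{i}}(B)\bigr].
\]
Equivalently, as divisors, $kA + [\ell A + cB] = [(k+\ell)A + cB]$.

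Now combine these two inputs. Multiplying the defining expression
\[
\MI{\fra^{\ell}\cdot\frb^{c}} \;=\; \mu_{*}\OO_{X^{\pr}}\bigl(K_{X^{\pr}/X} - [\ell A + cB]\bigr)
\]
by $\fra^{k}$ and using the projection formula (together with $\fra^{k}\cdot\OO_{X^{\pr}} = \OO_{X^{\pr}}(-kA)$), one gets
\[
\fra^{k}\cdot \MI{\fra^{\ell}\cdot\frb^{c}} \;\subseteq\; \mu_{*}\OO_{X^{\pr}}\bigl(K_{X^{\pr}/X} - kA - [\ell A + cB]\bigr).
\]
By the arithmetic observation, the right-hand side equals
\[
\mu_{*}\OO_{X^{\pr}}\bigl(K_{X^{\pr}/X} - [(k+\ell)A + cB]\bigr) \;=\; \MI{\fra^{k+\ell}\cdot\frb^{c}},
\]
which is exactly what is claimed.

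There is not really a ``hard step'' here; the only point requiring care is ensuring that the integrality of $kA$ lets the integer part be split off as in the arithmetic observation above (and, tacitly, that multiplying by the locally principal ideal sheaf $\OO_{X^{\pr}}(-kA)$ commutes with $\mu_{*}$ in the manner we used). Alternatively, and perhaps more concretely, one can argue valuatively: for $f\in\fra^{k}$ and $g\in\MI{\fra^{\ell}\cdot\frb^{c}}$, the valuations $\ord_{E_{i}}(\mu^{*}f)\ge k\cdot\ord_{E_{i}}(A)$ and $\ord_{E_{i}}(\mu^{*}g)\ge [\ell\cdot\ord_{E_{i}}(A)+c\cdot\ord_{E_{i}}(B)] - \ord_{E_{i}}(K_{X^{\pr}/X})$ add, and the same integrality observation shows $fg \in \MI{\fra^{k+\ell}\cdot\frb^{c}}$.
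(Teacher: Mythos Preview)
Your proof is correct. The paper does not actually give a proof of this lemma: it is presented as ``an elementary observation, whose proof we leave to the reader,'' and your argument is precisely the elementary verification one is expected to supply. The essential point---that $A$ is an \emph{integral} divisor, so that $kA + [\ell A + cB] = [(k+\ell)A + cB]$---is exactly right, and your valuative reformulation at the end is perhaps the cleanest way to say it.
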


Theorem \ref{Skoda.Thm.Statement}
 will follow from the exactness of certain ``Skoda complexes." Specifically, assume that $X$ is affine, fix any point $x \in X$, and choose $d$ general elements
 \[  f_1, \ldots, f_d \ \in \ \fra.\] 
\begin{theorem} \label{Skoda.Complex.Exact}
Still supposing that $m \ge d = \dim X$, the $f_i$  determine a Koszul-type complex
\begin{multline*}
0 \lra \Lambda^d \OO^d \otimes \MI{\fra^{m-d}\frb^c} \lra \Lambda^{d-1} \OO^d \otimes \MI{\fra^{m+1-d} \frb^c}  \lra \ldots \\ 
\ldots \lra
\Lambda^2  \OO^d \otimes \MI{\fra^{m-2} \frb^c} \lra \OO^d \otimes \MI{\fra^{m-1} \frb^c}  \lra \MI{\fra^{m} \frb^c} \lra 0
\end{multline*}
that is exact in a neighborhood of $x$. 
\end{theorem}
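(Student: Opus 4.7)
The approach is to build a Koszul complex on a log resolution, verify its exactness there via a general position argument, and then push it down, using local vanishing to preserve exactness and recover the Skoda complex on $X$.

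First I would pass to a common log resolution $\mu \colon X^\pr \lra X$ of $\fra$ and $\frb$, writing
\[ \fra \cdot \OO_{X^\pr} \ = \ \OO_{X^\pr}(-A) \ \ , \ \ \frb \cdot \OO_{X^\pr} \ = \ \OO_{X^\pr}(-B) \]
with $A + B + \exc(\mu)$ of SNC support. The pullbacks $\mu^* f_1, \ldots, \mu^* f_d$ then give $d$ sections of the line bundle $\OO_{X^\pr}(-A)$. The crucial general-position input is that these sections have empty common zero locus on some neighborhood of $\mu^{-1}(x)$: equivalently, the subideal $(f_1, \ldots, f_d) \subseteq \fra$ is a local reduction of $\fra$ at $x$, so that $(f_1, \ldots, f_d) \cdot \OO_{X^\pr}$ coincides with $\fra \cdot \OO_{X^\pr} = \OO_{X^\pr}(-A)$ over such a neighborhood. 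This reduction claim follows from classical Northcott--Rees theory, since the analytic spread of $\fra$ at $x$ is bounded above by $d = \dim X$.

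With the generation secured, the Koszul complex of the $\mu^* f_i$, viewed as $d$ sections of the line bundle $\OO_{X^\pr}(-A)$, takes the form
\[ 0 \lra \Lambda^d \OO^d \otimes \OO_{X^\pr}(dA) \lra \cdots \lra \OO^d \otimes \OO_{X^\pr}(A) \lra \OO_{X^\pr} \lra 0 \]
and is exact near $\mu^{-1}(x)$. Twisting through by $\OO_{X^\pr}\big(K_{X^\pr/X} - [mA + cB]\big)$ and using that $A$ is integral, the $p$-th term simplifies to
\[ \Lambda^p \OO^d \otimes \OO_{X^\pr}\big(K_{X^\pr/X} - [(m-p)A + cB]\big) \qquad (0 \le p \le d), \]
with the hypothesis $m \ge d$ ensuring that each bracket involves a non-negative multiple of $A$. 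The twisted complex remains exact near $\mu^{-1}(x)$.

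The final step is to push forward along $\mu$. By Definition~\ref{Def.Mixed.Mult.Ideals} the $p$-th term pushes down to $\Lambda^p \OO^d \otimes \MI{\fra^{m-p} \frb^c}$, which is exactly the $p$-th term of the Skoda complex. To preserve exactness I would break the upstairs complex into short exact sequences $0 \to Z_p \to K_p \to Z_{p-1} \to 0$ and apply $\mu_*$ inductively, invoking the local vanishing theorem (Theorem~\ref{Local.Van.Mult.Ideals}) in its mixed form,
\[ R^j \mu_* \OO_{X^\pr}\big(K_{X^\pr/X} - [(m-p)A + cB]\big) \ = \ 0 \fall j > 0, \]
to kill the obstructions at each stage. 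The main obstacle in this plan is the generation claim in the second paragraph, which rests on the non-trivial general-position / reduction-theoretic fact that $d = \dim X$ sufficiently general elements of $\fra$ generate $\fra \cdot \OO_{X^\pr}$ on a log resolution near $\mu^{-1}(x)$; once this is granted, the Koszul machinery and local vanishing combine essentially mechanically.
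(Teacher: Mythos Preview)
Your argument is correct and follows essentially the same route as the paper: pass to a log resolution, use that $d$ general elements of $\fra$ generate $\OO_{X^\pr}(-A)$ near $\mu^{-1}(x)$, form the resulting exact Koszul complex of line bundles, twist, and push down via local vanishing. The only cosmetic difference is that the paper justifies the generation step by an elementary dimension count (referring to \cite[9.6.19]{PAG}) rather than by invoking Northcott--Rees reduction theory, though it notes the reduction interpretation in a remark immediately after the proof.
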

\noi The homomorphism $ \OO^d \otimes \MI{\fra^{d-1} \frb^c}  \lra \MI{\fra^{d} \frb^c}$ on the right is given by multiplication by the vector $(f_1, \ldots, f_d)$. The surjectivity of this map implies that
\[
\MI{ \fra^m \frb^c} \ = \ (f_1, \ldots, f_d) \cdot \MI{\fra^{m-1} \frb^c}.
\]
But thanks to the Lemma one has
\[
(f_1, \ldots, f_d) \cdot \MI{\fra^{m-1} \frb^c} \ \subseteq \ \fra \cdot \MI{\fra^{m-1} \frb^c} \ \subseteq \ \MI{\fra^m \frb^c}, \]
so Skoda's theorem follows.

\begin{proof} [Proof of Theorem \ref{Skoda.Complex.Exact}]
Let $\mu : X^\pr \lra X$ be a log resolution of $\fra$ and $\frb$ as in Definition \ref{Def.Mixed.Mult.Ideals}. We keep the notation of that definition, so that in particular $\fra \cdot \OO_{X^\pr} = \OO_{X^\pr}(-A)$. The $d$ general elements $f_1, \ldots. f_d \in \fra$ determine sections 
\[ f_i^\pr \ \in \ \HH{0}{X^\pr}{\OO_{X^\pr}(-A)}, \]
and an elementary dimension count shows that after possibly shrinking $X$ one can suppose that these sections actually generate $\OO_{X^\pr}(-A)$ (cf. \cite[9.6.19]{PAG}). The $f_i^\pr$ then determine an exact Koszul complex
\begin{multline} \label{Koszul.Cx.For.Skoda}
0 \lra \Lambda^d \OO^d \otimes \OO_{X^\pr}(-(m-d)A)  \lra \Lambda^{d-1} \OO^d \otimes\OO_{X^\pr}(-(m-d-1)A)  \lra \ldots \\ 
\ldots \lra
\Lambda^2  \OO^d \otimes \OO_{X^\pr}(-(m-2)A) \lra \OO^d \otimes \OO_{X^\pr}(-(m-1)A) \lra \OO_{X^\pr}(-mA)\lra 0
\end{multline}
of vector bundles on $X^\pr$. 
Now twist \eqref{Koszul.Cx.For.Skoda} through by $\OO_{X^\pr}\big(K_{X^\pr/X} - [cB]\big)$. The higher direct images of all the terms of the resulting complex vanish thanks to Theorem \ref{Local.Van.Mult.Ideals}. This implies the exactness of the complex on $X$ obtained by taking direct images of the indicated twist of  \eqref{Koszul.Cx.For.Skoda}, which is exactly the assertion of the Theorem. 
\end{proof}

\begin{remark}
The functions $f_1, \ldots, f_r \in \fra$ occuring in Theorem \ref{Skoda.Complex.Exact} do not necessarily generate $\fra$. Rather (after shrinking $X$) they generate an ideal $\mathfrak{r} \subseteq \fra$ with the property that 
\[
\mathfrak{r} \cdot \OO_{X^\pr} \ = \ \fra \cdot \OO_{X^\pr},\]
which is equivalent to saying the $\frr$ and $\fra$ have the same integral closure. Such an ideal $\frr$ is called a ``reduction" of $\fra$. See \cite[\S 10.6.A]{PAG} for more details. \qed
\end{remark}

Following \cite{LazLee}, we use Theorem \ref{Skoda.Complex.Exact}
 to show that multiplier ideals satisfy some unexpected syzygetic conditions. By way of background, it is natural to ask which ideals $\frd \subseteq \OO_X$ can be realized as a multiplier ideal 
$\frd = \MI{\frb^c}$ for some $\frb$ and $c \ge 0$. It follows from the definition that multiplier ideals are integrally closed, meaning that membership in a multiplier ideal is tested by order of vanishing along some divisors over $X$. However until recently, multiplier ideals were not known to satisfy any other local properties. In fact, Favre--Jonsson \cite{FJ} and Lipman--Watanabe \cite{LW} showed that in dimension $d = 2$, any integrally closed ideal is locally a multiplier ideal. 

The next theorem implies that the corresponding statement is far from true in dimensions $d \ge 3$. We  work in the local ring $(\OO, \frakm)$ of $X$ at a point $x \in X$. 
\begin{theorem} \label{LSMI.Theorem}
Let \
\[  \frj \ = \  \MI{\frb^c}_x \ \subseteq \ \OO \] be the germ at $x$ of some multiplier ideal, and choose minimal generators
\[  h_1, \ldots, h_r \ \in \ \frj \]
of $\frj$. Let $b_1, \ldots, b_r \in \frakm$ be functions giving a minimal syzygy 
\[
\sum b_i h_i \ = \ 0
\]
among the $h_i$. Then there is at least one index $i$ such that
\[
\ord_x(b_i) \ \le \ d-1. 
\]
\end{theorem}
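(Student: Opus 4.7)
The plan is to argue by contradiction using the exactness of the Skoda complex (Theorem \ref{Skoda.Complex.Exact}). Assuming $\ord_x(b_i)\ge d$ for every $i$, I will show that $(b_1,\ldots,b_r)$ lies in $\frakm$ times the syzygy module of $(h_1,\ldots,h_r)$, which contradicts the minimality hypothesis.

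Since $\OO_{X,x}$ is regular of dimension $d$, a generic choice $f_1,\ldots,f_d\in\frakm$ forms a regular system of parameters, and these serve as the ``$d$ general elements'' needed to invoke Theorem \ref{Skoda.Complex.Exact} with $\fra=\frakm$ and $m=d$. Under the contradictory hypothesis each $b_i$ lies in $\frakm^d=(f_1,\ldots,f_d)\cdot\frakm^{d-1}$, so I may write $b_i=\sum_j f_j b_{ij}$ with $b_{ij}\in\frakm^{d-1}$. Set $g_j:=\sum_i b_{ij}h_i$; by Lemma \ref{Inclusion.Lemma}, $g_j\in\frakm^{d-1}\cdot\frj\subseteq\MI{\frakm^{d-1}\frb^c}$, and by construction $\sum_j f_j g_j=\sum_i b_i h_i=0$.

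Thus $(g_1,\ldots,g_d)$ is a syzygy of $(f_1,\ldots,f_d)$ with values in $\MI{\frakm^{d-1}\frb^c}$, i.e.\ an element of the kernel of the rightmost map of the Skoda complex. Exactness one step further to the left produces elements $\lambda_{jk}\in\MI{\frakm^{d-2}\frb^c}$ (for $j<k$) whose Koszul image reproduces $(g_1,\ldots,g_d)$. Since $\MI{\frakm^{d-2}\frb^c}\subseteq\MI{\frb^c}=\frj$ (the case $d=1$ is vacuous, as $\OO_{X,x}$ is then a DVR and so $r=1$), each $\lambda_{jk}$ expands as $\sum_i c^{jk}_i h_i$ in the generators. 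Substituting this expansion into $g_j=\sum_i b_{ij}h_i$ produces, for each $j$, an explicit syzygy $\tilde b_{\bullet j}=\bigl(b_{ij}-\sum_{k\ne j}\varepsilon_{jk}f_k c^{jk}_i\bigr)_i$ of the original generators $h_1,\ldots,h_r$.

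The one remaining step, which I expect to be the only real piece of bookkeeping, is the identity $b_i=\sum_j f_j\tilde b_{ij}$; unpacking the definitions, this amounts to cancellation of the $f_j f_k$ cross-terms, and that cancellation is forced by the antisymmetry of the Koszul signs. Granting this identity, $(b_1,\ldots,b_r)$ is displayed as an $\frakm$-linear combination of honest syzygies on $(h_1,\ldots,h_r)$, hence lies in $\frakm$ times the syzygy module, the desired contradiction. Everything else is a direct application of Theorem \ref{Skoda.Complex.Exact} together with the inclusion chain $\frakm^{d-1}\frj\subseteq\MI{\frakm^{d-1}\frb^c}$ and $\MI{\frakm^{d-2}\frb^c}\subseteq\frj$.
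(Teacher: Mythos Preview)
Your argument is correct and follows the same route as the paper's sketch: set $\fra=\frakm$, build the cycle $(g_1,\ldots,g_d)$ in the Skoda complex from the hypothetical high-order syzygy, and play this off against exactness. The only organizational difference is that the paper phrases the endgame as ``minimality of the $b_i$ forces the cycle to be cohomologically non-trivial, contradicting exactness,'' whereas you run the contrapositive, using exactness to write the cycle as a boundary and then unwinding to exhibit $(b_1,\ldots,b_r)$ as an $\frakm$-combination of syzygies. Your version in fact supplies the explicit bookkeeping (the antisymmetry cancellation giving $b_i=\sum_j f_j\tilde b_{ij}$) that the paper leaves to the phrase ``some Koszul cohomology arguments'' and the reference to \cite{LazLee}; this is a virtue, not a deviation.
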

\noi To say that the $h_i$ are minimal generators means by definition that they determine a basis of the $\OO/\frakm = \CC$-vector space $\frj / \frakm \cdot \frj$,
the hypothesis on the $b_i$ being similar. Note that there are no restrictions on the order of vanishing of \textit{generators} of a multiplier ideal, since for instance $\frakm^\ell = \MI{\frakm^{\ell + d - 1}}$ for any $\ell \ge 1$. On the other hand, the Theorem extends to  statements for the higher syzygies of $\frj$, for which we refer to \cite{LazLee}.

\begin{example}
Assume that $d = \dim X \ge 3$, choose two general functions $h_1, h_2 \in\frakm^p$ vanishing to order $p \ge d$ at $x$, and consider the complete intersection ideal \[ \frd \ =\  (h_1, h_2)\ \subseteq \ \OO. \]
Since $d \ge 3$, the zeroes of $\frd$ will be a reduced algebraic set of codimension $2$, and therefore $\frd$ is integrally closed. On the other hand, the only minimal syzygy among the $h_i$ is the Koszul relation
\[  (h_2) \cdot  h_1 \, + \, (-h_1) \cdot h_2 \ = \ 0. \]
In particular, the conclusion of the Theorem does not hold, and so $\frd$ is not a multiplier ideal. (When $d =2$, the ideal $\frd$ is not integrally closed.)  \qed
\end{example}

\begin{proof}[Idea of Proof of Theorem \ref{LSMI.Theorem}] The plan is to apply Theorem \ref{Skoda.Complex.Exact} with $\fra = \frakm$. Specifically, assume for a contradiction that each of the $b_i$ vanishes to order $\ge d$, and choose local coordinates  $z_1, \ldots , z_d$ at $x$, so that $\frakm = (z_1, \ldots, z_s)$. We can write
\[
b_1 \ = \ z_1 b_{11} + \ldots  + z_d b_{1d} \ , \ \ \cdots \ \ , \ b_r \ = \ z_1 b_{r1} + \ldots + z_d b_{rd},
\]
for some functions $b_{ij}$ vanishing to order $\ge d-1$ at $x$. Now put
\[
G_1 \ = \ b_{11} h_1 + \ldots + b_{r1}h_r \ , \ \ \cdots \ \ , \ G_d = b_{1d}h_1 + \ldots + b_{rd}h_d. 
\]
Then $G_j \in \MI{\frakm^{d-1}\frb^c}$ thanks to Lemma \ref{Inclusion.Lemma}, and
\begin{equation}
z_1 G_1 + \ldots + z_d G_d \ = \ 0 \tag{*}
\end{equation}
 by construction. The relation  (*) means that $(G_1, \ldots, G_d)$ is a cycle for the Skoda complex
 \begin{equation}
 \OO^{\binom{n}{2}} \otimes \MI{\frakm^{d-2}\frb^c} \lra \OO^d \otimes \MI{\frakm^{d-1}\frb^c} \lra \MI{\frakm^d \frb^c}  \tag{**}
 \end{equation}
 and using the fact that the $b_i$ are minimal one can show via some Koszul cohomology arguments that  it gives rise to a non-trivial cohomology class in (**). But this contradicts the exactness of (**).
 \end{proof}

\begin{example} [Skoda's theorem and the effective Nullstellensatz]
\label{Skoda.Nullstellensatz}
One can combine Skoda's theorem with jumping numbers to give statements in the direction of the effective Nullstellensatz. Specifically, let  $\fra \subseteq \OO_X$ be an ideal. Then there is an integer $s > 0$ such that \[(\sqrt{\fra})^s \ \subseteq \ \fra, \] and it is interesting to ask for effective bounds for $s$: see for instance \cite{Kollar.Null} and \cite{ELNull}, or \cite[\S 10.5]{PAG} for a survey and references.  Now fix a point point $x \in X$, and consider the jumping numbers $\xi_i = \xi_i(\fra;x)$ of $\fra$ at $x$ (Proposition/Definition \ref{Propdef.Jumping.Nos}). Let $\sigma = \sigma(\fra;x)$ be the least index such that 
$\xi_\sigma \ge d$.  Then
\[  \MI{\fra^{\xi_{\sigma}} }_x \ \subseteq \ \MI{\fra^d}_x \ \subseteq \ (\fra)_x , \]
thanks to Skoda's theorem, so it follows from Execise \ref{Mustata.Observation}
that 
\[ ( \sqrt{\fra})^{\sigma(\fra;x)} \ \subseteq \ \fra
\]
in a neighborhood of $x$. An analogous global statement holds using non-localized jumping numbers. It would be interesting to know whether one can recover or improve the results of \cite{Kollar.Null} or \cite{ELNull} by using global arguments to bound $\sigma$. (The arguments in \cite{ELNull} also revolve around Skoda's theorem, but from a somewhat different perspective.)  \qed
\end{example}


\section{Asymptotic Constructions}

In this lecture we will study asymptotic constructions that can be made with multiplier ideals. It is important in many geometric problems to be able to analyze for example the linear systems $\linser{mL}$ associated to arbitrarily large multiples of a given divisor. Unfortunately one cannot in general find one birational model of $X$ on which these are all well-behaved. By contrast, it turns out that there is some finiteness built into multiplier ideals, and the constructions  discussed here  are designed to exploit this. 

\subsection*{Aymptotic Multiplier Ideals}
In this section we construct the asymptotic multiplier ideals associated to a big divisor. For the purposes of motivation, we start by defining their non-asymptotic parents, which we have not needed up to now. 

Let $X$ be a smooth projective variety, and $L$ a divisor on $X$ such that the complete linear series $\linser{L}$ is non-trivial.
Given $c > 0$ we construct a multiplier ideal
\[ \MI{c \cdot \linser{L}}\  \subseteq \ \OO_X\] as follows. Take $\mu : X^\pr \lra X$ to be a log resolution of $\linser{L}$: this means that $\mu$ is a projective birational morphism, with 
\[  \mu^* \linser{L} \ = \ \linser{M} \, + \, F, \]
where $\linser{M}$ is a basepoint-free linear series, and $F + \exc({\mu})$ has SNC support. 
(This is the same thing as a log-resolution of the base-ideal $\bs{L} \subseteq \OO_X$ of $\linser{L}$.)
One then defines
\[  \MI{c \cdot \linser{L} } \ = \ \mu_* \OO_{X^\pr}\big( K_{X^\pr/X} - [cF] \big). \]
One can think of these ideals as measuring the singularities of the general divisor $A \in \linser{L}$. 
 
The multiplier ideals attached to a linear series enjoy a Nadel-type vanishing:
\begin{theorem} \label{Van.For.MI.Lin.Ser}
Assume that $B$ is  a nef and big divisor on $X$. Then 
\[
\HH{i}{X}{\OO_X(K_X + L + B) \otimes \MI{\linser{L}}} \ = \ 0
\]
for $i > 0$. \end{theorem}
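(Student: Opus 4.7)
The plan is to mimic the proof of the Nadel Vanishing Theorem \ref{Nadel.Vanishing.Theorem}: pass to the log resolution, apply Kawamata--Viehweg vanishing there, and descend via Local Vanishing. Concretely, I would let $\mu : X^\pr \lra X$ be the log resolution of $\linser{L}$ used to define $\MI{\linser{L}}$, so that $\mu^* \linser{L} = \linser{M} + F$ with $M$ basepoint-free (hence nef) and $F$ an integral effective divisor such that $F + \exc(\mu)$ has SNC support. The pullback $\mu^* B$ is again nef, and it is big because $\mu$ is birational (so $(\mu^* B)^d = (B^d) > 0$ by the lemma preceding Theorem \ref{Kawamata.Viehweg.Vanishing}). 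Hence the integral divisor $N := M + \mu^* B$ is nef, and
\[
(N^d) \ = \ \sum_{k=0}^{d}\binom{d}{k}\big(M^k \cdot (\mu^* B)^{d-k}\big) \ \ge \ (\mu^* B)^d \ > \ 0,
\]
each mixed intersection being non-negative since $M$ and $\mu^* B$ are both nef. Thus $N$ is nef and big.

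Applying Theorem \ref{Kawamata.Viehweg.Vanishing} on $X^\pr$ with $L' = N$ and $D' = 0$ (no rounding is needed, as $N$ is already integral) I would obtain
\[
\HH{i}{X^\pr}{\OO_{X^\pr}(K_{X^\pr} + M + \mu^* B)} \ = \ 0 \forr i > 0.
\]
Since $M \lin \mu^* L - F$, the sheaf on the left is isomorphic to $\OO_{X^\pr}(K_{X^\pr/X} - F) \otimes \mu^* \OO_X(K_X + L + B)$, so by the projection formula
\[
R^q \mu_* \OO_{X^\pr}(K_{X^\pr} + M + \mu^* B) \ = \ R^q \mu_* \OO_{X^\pr}(K_{X^\pr/X} - F) \otimes \OO_X(K_X + L + B).
\]
By Local Vanishing (Theorem \ref{Local.Van.Mult.Ideals}, in the version for the base ideal $\bs{L}$ with $c = 1$), the higher direct images on the right vanish, while the $q = 0$ term is precisely $\OO_X(K_X + L + B) \otimes \MI{\linser{L}}$. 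The Leray spectral sequence then collapses and identifies the vanishing on $X^\pr$ with the desired vanishing on $X$.

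I do not expect any serious obstacle: the argument is a direct transcription of the proof of Nadel vanishing, with the basepoint-free (nef) part $M$ of $\mu^* \linser{L}$ playing the role of the ``positive'' piece $\mu^* L - [\mu^* D]$ that appeared in the $\QQ$-divisor setting. The only step warranting explicit verification is that $M + \mu^* B$ is nef and big, as recorded above.
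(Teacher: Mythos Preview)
Your argument is correct. You pass to the log resolution, observe that $M + \mu^*B$ is nef and big, apply Kawamata--Viehweg there, and descend via local vanishing; every step checks out, including the bigness computation and the identification of the direct image with $\OO_X(K_X+L+B)\otimes\MI{\linser{L}}$.

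The paper takes a slightly different route. Rather than re-running the proof of Nadel vanishing in the linear-series setting, it first identifies $\MI{\linser{L}}$ with an ordinary $\QQ$-divisor multiplier ideal: one chooses $k>1$ general divisors $A_1,\ldots,A_k\in\linser{L}$, sets $D=\tfrac{1}{k}(A_1+\ldots+A_k)$, and notes (Exercise \ref{MI.Lin.Series.Ex.1}) that $\MI{\linser{L}}=\MI{D}$. Since $D\num L$, one has $(L+B)-D\num B$ nef and big, and the result follows immediately from Theorem \ref{Nadel.Vanishing.Theorem}. In effect the paper \emph{reduces} to Nadel, whereas you \emph{unwind} its proof in this context. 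Your approach has the minor advantage of not needing the general-divisor trick; the paper's has the advantage of being a one-line corollary once the identification $\MI{\linser{L}}=\MI{D}$ is known. Both are perfectly fine.
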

\noi A proof is sketched in the following Exercise.

\begin{exercise} \label{MI.Lin.Series.Ex.1}
Choose $k >c$ general divisors
$A_1, \ldots, A_k \in \linser{L}$, and let 
\[  D \ = \ \frac{1}{k} \cdot \big( A_1 + \ldots + A_k). \]
Then
\[
\MI{c \cdot \linser{L}} \ = \ \MI{cD}. \]
In particular, Theorem \ref{Van.For.MI.Lin.Ser} is a consequence of Theorem \ref{Nadel.Vanishing.Theorem}.  \qed
\end{exercise}

\begin{exercise} \label{MI.Lin.Series.Ex.2}Let $\frb = \bs{L} \subseteq \OO_X$ be the base-ideal of $\linser{L}$. Then
\[  \MI{c \cdot \linser{L} } \ = \ \MI{\frb^c}.  \ \qed \]
\end{exercise}

\begin{remark} [Incomplete linear series] Starting with a non-trivial linear series 
$\linser{V} \subseteq \linser{L}$, one constructs in the similar manner a multiplier ideal $\MI{c \cdot \linser{V}}$. The analogue of Theorem \ref{Van.For.MI.Lin.Ser} holds for these, as do the natural extensions of Exercises \ref{MI.Lin.Series.Ex.1} and \ref{MI.Lin.Series.Ex.2}. The reader may consult \cite[\S 9.2]{PAG} for details. \qed \end{remark}

\begin{exercise} (Adjoint ideals for linear series).
\label{Adj.Ideals.Lin.Series}
With $X$ as above, suppose that $S \subseteq X$ is a smooth irreducible divisor not contained in the base locus of $\linser{L}$. Then, as in the previous Lecture,  one constructs for $c > 0$ an adjoint ideal \[ \Adj{S}{X, c \cdot \linser{L}}\  \subseteq \  \OO_X. \]
This  sits in an exact sequence
\[
0 \lra \MI{X, c\cdot \linser{L} } \otimes \OO_X(-S) \lra \Adj{S}{X, c \cdot \linser{L} } \lra \MI{S, c \cdot \linser{L}_S } \lra 0,
\]
where the term $\linser{L}_S$ on the right involves the (possibly incomplete) linear series on $S$ obtained as the restriction of  $\linser{L}$ from $X$ to $S$. (One can use Exercise \ref{MI.Lin.Series.Ex.1} to reduce to the case of divisors.) \qed  \end{exercise}

The ideals $\MI{c \cdot \linser{L}}$ reflect the geometry of the linear series $\linser{L}$. But a variant of this construction gives rise to ideals that involve the asymptotic geometry of $\linser{mL}$ for all $m \gg 0$. 
\begin{propdef} \label{Prop.Def.Defining.AMI}
Assume that $L$ is big, and fix $c > 0$. Then for $p \gg 0$ the multiplier ideals
\[  \MI{X, \tfrac{c}{p} \cdot \linser{pL}} \]
all coincide. The resulting ideal, written $\MI{X, c \cdot \alinser{L}}$, is the \tn{asymptotic multiplier ideal} of $\linser{L}$ with coefficient $c$.  
\end{propdef}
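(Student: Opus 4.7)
The plan is to exploit the directed structure of the family $\{\MI{\tfrac{c}{p}\cdot\linser{pL}}\}_p$ together with Noetherianity to produce a maximal ideal $\frj$, and then to use bigness of $L$ to show that $\MI{\tfrac{c}{p}\cdot\linser{pL}} = \frj$ for every $p \gg 0$.

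Via Exercise \ref{MI.Lin.Series.Ex.2}, one has $\MI{\tfrac{c}{p}\cdot\linser{pL}} = \MI{\frb_p^{c/p}}$, where $\frb_p := \bs{pL}$ denotes the base-ideal of $\linser{pL}$. Multiplication of sections $H^0(pL)^{\otimes k} \to H^0(pkL)$ gives the containment $\frb_p^k \subseteq \frb_{pk}$ of ideals, and since trivially $\MI{(\frb_p^k)^{c/(pk)}} = \MI{\frb_p^{c/p}}$, one obtains the fundamental inclusion
\[
\MI{\frb_p^{c/p}} \ \subseteq \ \MI{\frb_{pk}^{c/(pk)}} \qquad\text{for all } k \ge 1.
\]
In particular, any two members $\MI{\frb_{p_1}^{c/p_1}}$ and $\MI{\frb_{p_2}^{c/p_2}}$ of the family are dominated by $\MI{\frb_{p_1 p_2}^{c/(p_1 p_2)}}$, so by Noetherianity the directed union $\bigcup_p \MI{\frb_p^{c/p}}$ stabilizes at some $\frj := \MI{\frb_{p_0}^{c/p_0}}$, and then $\MI{\frb_p^{c/p}} \subseteq \frj$ for every $p$.

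It remains to establish the reverse inclusion $\MI{\frb_p^{c/p}} \supseteq \frj$ for $p \gg 0$; this is where bigness of $L$ enters. Bigness guarantees $\linser{sL} \ne \emptyset$, equivalently $\frb_s \ne 0$, for all $s \ge m_0$ with some fixed $m_0$. For $p \gg 0$, write $p = k p_0 + s$ with $s \in \{m_0, \ldots, m_0 + p_0 - 1\}$ (finitely many possibilities). The inclusion $\frb_{p_0}^k\cdot \frb_s \subseteq \frb_p$ then yields
\[
\MI{\frb_p^{c/p}} \ \supseteq \ \MI{\frb_{p_0}^{ck/p}\cdot\frb_s^{c/p}}.
\]
On a common log resolution $\mu: X^\pr \to X$ with $\frb_{p_0}\cdot\OO_{X^\pr} = \OO_{X^\pr}(-A)$ and $\frb_s\cdot\OO_{X^\pr} = \OO_{X^\pr}(-B_s)$, the key observation is that $ck/p \uparrow c/p_0$ from below as $p\to\infty$, while $c/p \downarrow 0$ and the $B_s$ range over a finite set. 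A component-wise analysis of rounding then shows that for $p$ sufficiently large, uniformly in $s$,
\[
\bigl[(ck/p)A + (c/p)B_s\bigr] \ \le \ \bigl[(c/p_0)A\bigr],
\]
which gives $\MI{\frb_{p_0}^{ck/p}\cdot\frb_s^{c/p}} \supseteq \MI{\frb_{p_0}^{c/p_0}} = \frj$ and hence the desired equality.

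The main obstacle is the round-down estimate in the last step. Because the coefficients $(c/p_0) a_i$ of $(c/p_0)A$ can be integral, a naive convergence argument does not suffice: one must examine whether the tiny perturbation $(c/p)(b_i^{(s)} - s a_i/p_0)$ added to each $(c/p_0) a_i$ can push its integer part upward, and verify separately in the integral and non-integral cases. Bigness, which supplies the nonzero ``filler'' factor $\frb_s$, together with the boundedness of $s$, is exactly what permits uniform control of these corrections across all large $p$.
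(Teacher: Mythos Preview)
Your argument is correct and in fact more complete than the paper's own sketch. The first half --- the directed-family inclusion $\MI{\frb_p^{c/p}} \subseteq \MI{\frb_{pq}^{c/(pq)}}$ combined with Noetherianity to produce a unique maximal element $\frj$ achieved for all sufficiently large and \emph{divisible} $p$ --- is exactly what the paper does. The paper then stops and refers to \cite[Proposition 11.1.18]{PAG} for the passage from ``sufficiently large and divisible'' to ``all sufficiently large $p$''; you instead supply that argument directly, using bigness to find a nonzero filler $\frb_s$ and the decomposition $p = kp_0 + s$ with $s$ ranging over a finite set. Your identification of the delicate point (integral coefficients of $(c/p_0)A$) is accurate, and the case analysis you outline does go through: writing the $i$-th coefficient of $(ck/p)A + (c/p)B_s$ as $\alpha_i + (c/p)\bigl(b_i^{(s)} - (s/p_0)a_i\bigr)$ with $\alpha_i = (c/p_0)a_i$, one checks that for $p$ large the perturbation has absolute value $<1$ and, when $\alpha_i \in \ZZ$, cannot raise the floor since $[\alpha_i + \eps] = \alpha_i$ for $0 \le \eps < 1$ and $[\alpha_i + \eps] = \alpha_i - 1$ for $-1 < \eps < 0$.
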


\begin{proof}[Idea of Proof] It follows from the Noetherian property that as $p$ varies over all positive integers, the family of ideals $\MI{\tfrac{c}{p}\cdot \linser{pL}}$ has a maximal element. But one checks  that 
\[
\MI{ \tfrac{c}{p}\cdot \linser{pL}}  \ \subseteq \ \MI{\tfrac{c}{pq}\cdot\linser{pqL}} \]
for all $q \ge 1$, and therefore the family in question has a \textit{unique} maximal element $\MI{c \cdot \alinser{L}}$, which coincides with $\MI{\tfrac{c}{p}\cdot \linser{pL}}$ for all sufficiently large and divisible $p$. For the fact that these agree with $\MI{\tfrac{c}{p}\cdot\linser{pL}}$ for any sufficiently large $p$ (depending on $c$), see \cite[Proposition 11.1.18]{PAG}.
\end{proof}

The asymptotic multiplier ideals associated to a big divisor $L$ are the algebro-geometric analogues of the multiplier ideals associated to metrics of minimal singularities in the analytic theory. It is conjectured -- but not known -- that the two sorts of multiplier ideals actually coincide provided that $L$ is big. (See \cite{DEL}).

The following theorem summarizes the most important properties of asymptotic multiplier ideals.
\begin{theorem}[Properties of asymptotic multiplier ideals]
\label{Properties.AMI.Thm}
Assume that $L$ is a big divisor on the smooth projective variety $X$.
\begin{enumerate}
\item[(i).] Every section of $\OO_X(L)$ vanishes along $\MI{\alinser{L}}$, i.e. the natural inclusion
\[  \HH{0}{X}{\OO_X(L) \otimes \MI{\alinser{L}}} \lra \HH{0}{X}{\OO_X(L)} \]
is an isomorphism. Equivalently, $\bs{L} \subseteq \MI{X, \alinser{L}}$. 
\sbl
\item[(ii).] If $B$ is nef, then
\[
\HH{i}{X}{\OO_X(K_X + L + B) \otimes \MI{\alinser{L}}} \ = \ 0
\]
for $i > 0$.
\lbl
\item[(iii).] $\MI{\alinser{mL}} = \MI{m \cdot \alinser{L}}$ for every positive integer $m$. 
\lbl
\item[(iv).] \tn{(}Subadditivity.\tn{)}  $\MI{\alinser{mL}} \subseteq \ \MI{\alinser{L}}^m$ for every $m \ge 0$. 
\lbl
\item[(v).] If $f : Y \lra X$ is an \'etale covering, then 
\[
\MI{Y, \alinser{f^*L}} \ = \ f^* \MI{X, \alinser{L}}.
\]
\end{enumerate}
\end{theorem}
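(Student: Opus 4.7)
The plan is to reduce each property to a statement about the finite-level multiplier ideal $\MI{\tfrac{1}{p}\linser{pL}}$ for $p$ sufficiently large (and sufficiently divisible, when needed), using Proposition/Definition \ref{Prop.Def.Defining.AMI}. Throughout I would work on a common log resolution $\mu \colon X^\pr \to X$ of $\linser{pL}$, writing $\mu^*\linser{pL} = \linser{M_p} + F_p$ with $M_p$ basepoint-free.

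For (i), any section $s \in \HH{0}{X}{\OO_X(L)}$ yields $\mu^* s^p$ as an element of $\mu^*\linser{pL}$, so its divisor dominates $F_p$ and hence $\divisor(\mu^*s) \ge [\tfrac{1}{p}F_p]$ as integer divisors. Since $K_{X^\pr/X}$ is effective, $\mu^*s$ is then a section of $\OO_{X^\pr}(\mu^*L + K_{X^\pr/X} - [\tfrac{1}{p}F_p])$, which the projection formula pushes down to a section of $\OO_X(L) \otimes \MI{\alinser{L}}$. Claim (iii) falls out of the definition: for $p$ large and divisible, both sides coincide with $\MI{\tfrac{1}{p}\linser{pmL}}$.

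For (ii), I would work upstairs. The numerical identity $\mu^*L - \tfrac{1}{p}F_p \equiv \tfrac{1}{p}M_p$ gives $\mu^*(L+B) - \tfrac{1}{p}F_p \equiv \tfrac{1}{p}M_p + \mu^*B$. Since $L$ is big, for $p$ large the mobile part $M_p$ is nef and big: writing $L \sim_{\QQ} A + E$ with $A$ ample and $E$ effective via Kodaira's lemma, a basepoint-free subseries of $\linser{pA}$ eventually sits inside $\linser{M_p}$. Hence $\tfrac{1}{p}M_p + \mu^*B$ is the sum of a nef-and-big class with a nef class, so is itself nef and big. Kawamata--Viehweg (Theorem \ref{Kawamata.Viehweg.Vanishing}) applied to $K_{X^\pr} + \mu^*(L+B) - [\tfrac{1}{p}F_p] = K_{X^\pr/X} - [\tfrac{1}{p}F_p] + \mu^*(K_X + L + B)$, together with local vanishing (Theorem \ref{Local.Van.Mult.Ideals}) and the projection formula, would give the desired vanishing downstairs.

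For (iv), I would combine (iii) with the Subadditivity Theorem (Theorem \ref{Subadditivity.Theorem}). By (iii) and Exercise \ref{MI.Lin.Series.Ex.1}, $\MI{\alinser{mL}} = \MI{\tfrac{m}{p}\linser{pL}} = \MI{mD_p}$ where $D_p = \tfrac{1}{pk}\sum_j A_j \sim_{\QQ} L$ for general $A_j \in \linser{pL}$; iterating subadditivity for $m$ copies of $D_p$ gives $\MI{mD_p} \subseteq \MI{D_p}^m = \MI{\alinser{L}}^m$. For (v), étaleness guarantees $K_Y = f^*K_X$ and that base-change of $\mu$ along $f$ yields a log resolution suitable for computing $\MI{Y, \alinser{f^*L}}$; the inclusion $f^*\MI{X, \alinser{L}} \subseteq \MI{Y, \alinser{f^*L}}$ follows from $f^*\bs{pL} \subseteq \bs{f^*(pL)}$. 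The reverse inclusion is more delicate: it rests on a norm-type argument, since any section of $\OO_Y(f^*pL)$ has norm lying in $\HH{0}{X}{\OO_X(\deg(f)\cdot pL)}$, which asymptotically equalizes the two base ideals inside the multiplier ideal.

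The main obstacle is (ii): the essential geometric input is that the mobile part $M_p$ becomes big as $p \to \infty$ when $L$ is big, which is precisely what allows one to upgrade the merely nef hypothesis on $B$ to the nef-and-big input required by Kawamata--Viehweg. The secondary difficulty is the reverse inclusion in (v), where the naive containment of base ideals is strict in general and must be absorbed asymptotically.
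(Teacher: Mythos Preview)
Your proposal is correct and, for parts (ii) and (iii)---the only ones the paper actually argues---matches the paper's approach exactly: work on a log resolution of $\linser{pL}$, use bigness of $L$ to ensure $M_p$ is big so that $\tfrac{1}{p}M_p + \mu^*B$ is nef and big, then apply Kawamata--Viehweg and local vanishing; and derive (iii) formally from the definition by reindexing. Your sketches for (i), (iv), and (v) are also sound and in line with the standard arguments the paper defers to \cite{PAG}; in particular your identification of the norm argument as the nontrivial input for the reverse inclusion in (v) is accurate.
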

\noi Concerning the vanishing in (ii), note that it is not required that $B$ be big. In particular, one can take $B = 0$. 
\begin{exercise}
Give counterexamples to the analogues of properties (ii) -- (v) if one replaces the asymptotic ideals by the multiplier ideals $\MI{\linser{L}}$ attached to a single linear series. (For example, suppose that $L$ is ample, but that $\linser{L}$ is not free. Then $\MI{m \cdot \linser{L}} \ne \OO_X$ for $m \gg 0$, whereas $\MI{\linser{mL}} = \OO_X$ for large $m$.) \qed \end{exercise}

\begin{exercise}
In the situation of the Theorem, one has
\[ \MI{c \cdot \alinser{L}} \ \subseteq  \ \MI{d \cdot \alinser{L}}\]
whenever $c \ge d$. \qed \end{exercise}

\begin{proof}[Indications of Proof of Theorem \ref{Properties.AMI.Thm}]
We prove (ii) and (iii) in order to give the flavor. For (ii), choose $p \gg 0$ such that $\MI{\alinser{L}} = \MI{\frac{1}{p}\linser{L}}$, and let 
$\mu : X^\pr \lra X$ be a log-resolution of $\linser{pL}$, with
\[  \mu^* \linser{pL} \ = \ \linser{M_p} \, + \, F_p, \]
where $\linser{M_p}$ is free. We can suppose that $M_p$ is big (since $L$ is). Arguing as in the proof of Theorem \ref{Nadel.Vanishing.Theorem}, one has
\[
\HH{i}{X}{\OO_X(K_X + L+B) \otimes \MI{\alinser{L}}} \ = \ \HH{i}{X^\pr}{\OO_{X^\pr}(K_{X^\pr} + \mu^*(L + B) - [\tfrac{1}{p}F_p])}.
\]
But \[ \mu^*(L + B)-\tfrac{1}{p}F_p \ \num \ \mu^*(B) + \tfrac{1}{p}M_p\] is nef and big, so the required vanishing follows from the theorem of Kawamata and Viehweg. For (iii), the argument is purely formal. Specifically, for $p \gg 0$ one has thanks to Proposition/Definition \ref{Prop.Def.Defining.AMI}
\begin{align*}
\MI{\alinser{mL}} \ &= \ \MI{\tfrac{1}{p}\cdot \linser{pmL}} \\
&= \ \MI{\tfrac{m}{mp}\cdot \linser{pmL}} \\
&= \ \MI{m \cdot \alinser{L}},
\end{align*}
where in the last equality we are using $mp$ in place of $p$ for the large index computing the asymptotic multiplier ideal in question. 
For the remaining statements we refer to \cite[11.1, 11.2]{PAG}. 
\end{proof}

\begin{exercise} [Uniform global generation] 
\label{Asympt.Univ.Gl.Gen.Ex}
Theorem \ref{Unif.Gl.Gen.Thm} extends to the asymptotic setting. Specifically, there exists a divisor $B$ on $X$ with the property that
$\OO_X(L + B) \otimes \MI{\alinser{L}}$ is globally generated for any big divisor $L$ on $X$. \qed
\end{exercise}

\begin{exercise}[Characterization of nef and big divisors] Let $L$ be a big divisor on $X$. Then $L$ is nef if and only if
\begin{equation}
\MI{\alinser{mL}} \ = \ \OO_X \ \ \text{ for all } m > 0. \tag{*}
\end{equation}
(Suppose (*) holds. Then it follows from the previous exercise that $\OO_X(mL + B)$
is globally generated for all $m > 0$ and some fixed  $B$. This implies that $\big ( (mL +B) \cdot C \big) \ge 0$ for every effective curve $C$ and every $m > 0$, and hence that $(L \cdot C) \ge 0$.)  \qed
\end{exercise}

\subsection*{Variants}
We next discuss some variants of the construction studied in the previous section. To begin with, one can deal with possibly incomplete linear series. For this, recall that a \textit{graded linear series} $W\bull = \{ W_m\} $ associated to a big divisor $L$ consists of subspaces \[
W_m \ \subseteq \ \HH{0}{X}{\OO_X(mL)},  \]
with $W_m \ne 0$ for $m \gg 0$, satisfying the condition that 
\[ R(W\bull) \ =_{\text{def}} \ \oplus W_m\] be a graded subalgebra of the section ring $R(L) = \oplus \HH{0}{X}{\OO_X(mL)}$ of $L$. 
This last requirement is equivalent to asking that 
\[  W_\ell \cdot W_m \ \subseteq \ W_{\ell + m},\]
where the left hand side denotes the image of $W_\ell \otimes W_m$ under the natural map \[\HH{0}{X}{\OO_X(\ell L)} \otimes \HH{0}{X}{\OO_X(m  L)}\lra \HH{0}{X}{\OO_X((\ell + m)L)} .\] Then just as above one gets an asymptotic multiplier ideal by taking
\[ \MI{c \cdot \alinser{W\bull}} \ = \ \MI{\tfrac{c}{p} \cdot \linser{W_p}} \]
for $p \gg 0$. Analogues of Properties (i) -- (iv) from Theorem \ref{Properties.AMI.Thm} remain valid in this setting; we leave precise statements and proofs to the reader.

\begin{example} [Restricted linear series] \label{Restr.Series.AMI}
An important example of this construction occurs when $Y \subseteq X$ is a smooth subvariety not contained in the stable base locus $\BBB(L)$ of the big line bundle $L$ on $X$. In this case, one gets a graded linear series on $Y$ by taking 
\[
W_m \ = \ \tn{Im} \Big ( \HH{0}{X}{\OO_X(mL)} \lra \HH{0}{Y}{\OO_Y(mL_Y)} \Big). 
\]
We denote the corresponding multiplier ideals by $\MI{Y, c \cdot \alinser{L}_Y}$.
Note that there is an inclusion 
\[ \MI{Y, c \cdot \alinser{L}_Y} \ \subseteq \ \MI{Y, c \cdot \alinser{L_Y}},\]
but these can be quite different. For example if the restriction $L_Y$ is ample on $Y$, then the ideal on the right is trivial. On the other hand, if  for all $m \gg 0$   the linear series $\linser{mL}$ on $X$ have base-loci that meet $Y$, then the ideals on the left could be quite deep. \qed
\end{example}
 
 One can also extend the construction of adjoint ideals to the asymptotic setting.
 Assume that $S \subseteq X$ is a smooth irreducible divisor which is not contained in the stable base locus $\BBB(L)$ of a big divisor $L$. 
 Then one defines an asymptotic adjoint ideal $\Adj{S}{X, \alinser{L}}$ that fits into an exact sequence:
\begin{equation} \label{Asympt.Adj.Seq}
0 \lra \MI{X, \alinser{L}}\otimes \OO_X(-S) \lra \Adj{S}{X, \alinser{L}} \lra \MI{S, \alinser{L}_S} \lra 0.
\end{equation}
The ideal on the right is the asymptotic multiplier ideal associated to the restricted linear series of $L$ from $X$ to $S$, as in the previous example. This sequence will play a central role in our discussion of extension theorems in the next Lecture.

Finally, we discuss the asymptotic analogues of the ideals $\MI{\fra^c}$. A \textit{graded family of ideals} $\fra\bull = \{ \fra_m\}$ on a variety $X$ consists of ideal sheaves
$\fra_m \subseteq \OO_X$ satisfying the property that
\[
\fra_{\ell} \cdot \fra_m \ \subseteq \ \fra_{\ell + m}.
\]
We will also suppose that $\fra_0 = \OO_X$ and that $\fra_m \ne (0)$ for $m \gg 0$. Then one defines:
\[
\MI{\fra\bull^c} \ = \ \MI{c \cdot \fra\bull} \ =_{\text{def}} \ \MI{(\fra_p)^{c/p} }
\]
for $p \gg 0$. 

\begin{example}
The prototypical example of a graded system of ideals is the family of base-ideals \[
\frb_m \ = \ \bs{mL}\]
associated to multiples of a big divisor $L$ when $X$ is projective. In this case
\[
\MI{\frb\bull^c} \ = \ \MI{c \cdot \alinser{L}}. \ \qed
\]
\end{example}

\begin{example}[Symbolic powers]
\label{symbolic.power.example} A second important example involves the symbolic powers of a radical ideal $\frq = \II_Z $ defining a (reduced) subscheme $Z \subseteq X$. Assuming as usual that $X$ is smooth, define 
\[
\frq^{(m)} \ = \ \big \{ f \in \OO_X \ \big | \ \ord_x(f) \ge m \ \text{ for general } x \in Z \big \}.
\]
(If $X$ is reducible, we ask that the condition hold at a general point of each irreducible component of $X$.) Observe that by construction, membership in $\frq^{(m)}$ is tested at a general point of $Z$, i.e. this is a primary ideal. The inclusion 
\[  \frq^{(\ell)} \cdot \frq^{(m)} \ \subseteq \ \frq^{(\ell  + m)} \]
being evident, these form a graded family denoted $\frq_{(\bullet)}$. \qed
\end{example}

\begin{exercise} \label{Property.GSI}
Let $\fra\bull$ be a graded family of ideals. Then for every $m \ge 1$:
\begin{enumerate}
\item[(i).] $\fra_m \subseteq \MI{\fra\bull^m} $;
\lbl
\item[(ii).]  $\MI{\fra\bull^m} \subseteq \MI{\fra\bull}^m$. \ \qed \end{enumerate}
\end{exercise}

\subsection*{\'Etale Multiplicativity of Plurigenera}

As a first illustration of this machinery, we prove a theorem of Koll\'ar concerning the behavior of plurigenera under \'etale covers.

Given a smooth projective variety $X$, recall that the $m^{\text{th}}$ plurigenus $P_m(X)$ of $X$ is the dimension of the space of $m$-canonical forms on $X$:
\[  P_m(X)  \ =_{\text{def}} \ \hh{0}{X}{\OO_X(mK_X)}. \]
These plurigeneral are perhaps the most basic birational invariants of a variety.

Koll\'ar's theorem is that these behave well under \'etale coverings:
\begin{theorem}
\label{Kollar.Plurigenera}
 Assume that $X$ is of general type, and 
let $f : Y \lra X$ be an unramified covering of degree $d$. Then for $m \ge 2$,
\[  P_m(Y) \ = \ d \cdot P_m(X).\]
\end{theorem}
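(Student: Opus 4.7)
The plan is to express both $P_m(X)$ and $P_m(Y)$ as Euler characteristics of corresponding twisted asymptotic multiplier ideal sheaves, and then read off the desired equality from multiplicativity of $\chi$ under an étale cover. Because $f$ is étale, the relative canonical bundle vanishes and $f^*K_X = K_Y$; because $X$ is of general type, $(m-1)K_X$ is big for every $m \geq 2$, so the asymptotic multiplier ideals $\mathcal{J}_X := \MI{\alinser{(m-1)K_X}}$ and $\mathcal{J}_Y := \MI{\alinser{(m-1)K_Y}}$ are defined, and property (v) of Theorem \ref{Properties.AMI.Thm} gives $\mathcal{J}_Y = f^*\mathcal{J}_X$.

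The crux is the identity
\[
H^0\bigl(X, \OO_X(mK_X) \otimes \mathcal{J}_X\bigr) \ = \ H^0\bigl(X, \OO_X(mK_X)\bigr)
\]
(and its analogue on $Y$), and this is where the hypothesis $m \geq 2$ is used essentially. The idea is a sandwich: write $mK_X = K_X + (m-1)K_X$ and apply a mixed asymptotic subadditivity
\[
\MI{\alinser{mK_X}} \ \subseteq \ \MI{\alinser{K_X}} \cdot \MI{\alinser{(m-1)K_X}} \ \subseteq \ \mathcal{J}_X,
\]
obtained by running the diagonal-restriction proof of Theorem \ref{Subadditivity.Theorem} on a common log resolution of $|pK_X|$ and $|p(m-1)K_X|$ for $p \gg 0$. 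Combining this with property (i) of Theorem \ref{Properties.AMI.Thm}, which says $H^0(X, mK_X \otimes \MI{\alinser{mK_X}}) = H^0(X, mK_X)$, produces the chain
\[
H^0(X, mK_X) \ = \ H^0(X, mK_X \otimes \MI{\alinser{mK_X}}) \ \subseteq \ H^0(X, mK_X \otimes \mathcal{J}_X) \ \subseteq \ H^0(X, mK_X),
\]
forcing equality throughout.

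Finally, property (ii) of Theorem \ref{Properties.AMI.Thm} applied to $L = (m-1)K_X$ and $B = 0$ (nef) gives $H^i(X, \OO_X(mK_X) \otimes \mathcal{J}_X) = 0$ for $i > 0$, so $P_m(X) = \chi(X, \OO_X(mK_X) \otimes \mathcal{J}_X)$, and identically on $Y$. Since $\OO_Y(mK_Y) \otimes \mathcal{J}_Y = f^*(\OO_X(mK_X) \otimes \mathcal{J}_X)$ and $f_*\OO_Y$ is locally free of rank $d$, the projection formula yields $\chi(Y, f^*\mathcal{F}) = \chi(X, f_*\OO_Y \otimes \mathcal{F}) = d \cdot \chi(X, \mathcal{F})$ for any coherent $\mathcal{F}$ on $X$, and hence $P_m(Y) = d \cdot P_m(X)$. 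The main obstacle is the mixed subadditivity step: property (iv) of Theorem \ref{Properties.AMI.Thm} as stated only yields $\MI{\alinser{mK_X}} \subseteq \MI{\alinser{K_X}}^m$, so the needed inclusion $\MI{\alinser{mK_X}} \subseteq \MI{\alinser{(m-1)K_X}}$ must be established separately as the mixed asymptotic version of Theorem \ref{Subadditivity.Theorem}.
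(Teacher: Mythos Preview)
Your argument is correct and follows essentially the same route as the paper: replace $P_m$ by an Euler characteristic using Nadel vanishing for $\MI{\alinser{(m-1)K_X}}$, and then invoke multiplicativity of $\chi$ under \'etale covers together with property (v).

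The one place where you work harder than necessary is the inclusion $\MI{\alinser{mK_X}} \subseteq \MI{\alinser{(m-1)K_X}}$. You derive this via a mixed asymptotic subadditivity
\[
\MI{\alinser{mK_X}} \ \subseteq \ \MI{\alinser{K_X}} \cdot \MI{\alinser{(m-1)K_X}} \ \subseteq \ \MI{\alinser{(m-1)K_X}},
\]
and flag it as ``the main obstacle.'' In fact no subadditivity is needed: by property (iii) one has $\MI{\alinser{mK_X}} = \MI{m\cdot \alinser{K_X}}$ and $\MI{\alinser{(m-1)K_X}} = \MI{(m-1)\cdot \alinser{K_X}}$, and the inclusion $\MI{c\cdot \alinser{L}} \subseteq \MI{d\cdot \alinser{L}}$ for $c \ge d$ is immediate from the definition (larger coefficient gives a deeper ideal on any fixed log resolution of $|pL|$). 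This is exactly the exercise following Theorem~\ref{Properties.AMI.Thm}, and it is what the paper uses. Your detour through subadditivity does work---for a common $p \gg 0$ it reduces to ordinary subadditivity for the single ideal $\bs{pK_X}$ with exponents $\tfrac{1}{p}+\tfrac{m-1}{p}=\tfrac{m}{p}$---but it obscures that the needed inclusion is elementary monotonicity rather than a genuine product estimate.
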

\noi Koll\'ar was led to this statement by the observation that it would (and now does) follow from the minimal model program.

\begin{exercise} Show that the analogous statement can fail when $m = 1$. (Consider the case $\dim X = 1$.) \qed 
\end{exercise}

\begin{proof}[Proof of Theorem \ref{Kollar.Plurigenera}]
We use the various properties of asymptotic multiplier ideals given in Theorem \ref{Properties.AMI.Thm}. To begin with, \ref{Properties.AMI.Thm} (i) yields:\begin{align*}
\HH{0}{X}{\OO_X(mK_X)} \ &= \ \HH{0}{X}{\OO_X(mK_X) \otimes \MI{\alinser{mK_X}} }\\
&= \ \HH{0}{X}{\OO_X(mK_X) \otimes \MI{\alinser{(m-1)K_X}}},
\end{align*}
the second equality coming from the includion $\MI{\alinser{(m-1)K_X}} \subseteq \MI{\alinser{mK_X}}$.
But since $X$ is of general type, when $m \ge 2$ the vanishing statement (ii) implies that
\[
\HH{i}{X}{\OO_X(mK_X) \otimes \MI{\alinser{(m-1)K_X}} } \ = \ 0
\]
when $i > 0$. Therefore
\[P_m(X) \ = \ \chi \Big( X, \OO_X(mK_X) \otimes \MI{\alinser{(m-1)K_X}}\Big), \]
and similarly
\[
P_m(Y) \ = \ \chi \Big( Y, \OO_Y(mK_Y) \otimes \MI{\alinser{(m-1)K_Y}}\Big).\]
On the other hand, 
\begin{align*}
f^* K_X \ & \lin \  K_Y \\ f^* \MI{X, \alinser{(m-1)K_X}} \ &= \  \MI{Y, \alinser{(m-1)K_Y}}
\end{align*}
thanks to \ref{Properties.AMI.Thm} (v). The Theorem then follows from the fact that Euler characteristics are multiplicative under \'etale covers. \end{proof}

\begin{remark}
It was suggested in \cite[Example 11.2.26]{PAG} that a similar argument handles adjoint bundles of the type $\OO_X(K_X + mL)$. However this -- as well as the reference given there -- is erroneous. \qed \end{remark}

\begin{exercise}
Assume that $X$ is of general type. Then for $m \ge 1$ and $i > 0$ the maps
\[
\HH{0}{X}{\OO_X(mK_X)} \otimes \HH{i}{X}{\OO_X(K_X)} \lra \HH{i}{X}{\OO_X((m+1)K_X)} \]
defined by cup product are zero. (Argue as in the proof of Koll\'ar's theorem that the map factors through $\HH{i}{X}{\OO_X((m+1)K_X \otimes \MI{\alinser{mK_X}}}$.) \qed  \end{exercise}

\subsection*{A Comparison Theorem for Symbolic Powers}

We start with a statement that follows formally from the subadditivity theorem in the form of Exercise \ref{Property.GSI}. It shows that if a graded system of ideals has any non-trivial multiplier ideal, then that system  must grow like the power of an ideal.
\begin{proposition}
Let $\fra\bull$ be a graded family of ideals, and fix an index $\ell$. Then for any $m$:
\[
\fra_\ell^m \ \subseteq \ \fra_{\ell m} \ \subseteq \ \MI{\fra\bull^{\ell m}} \ \subseteq \ \MI{\fra\bull^\ell}^m. 
\]
In particular, if $\MI{\fra\bull^\ell} \subseteq \frb$ for some ideal $\frb$, then
\[  \fra_{\ell m}\ \subseteq \ \frb^m\]
for every $m \ge 0$. \qed
\end{proposition}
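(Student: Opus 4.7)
The proposition asserts a three-term chain of inclusions, followed by an ``In particular'' clause that is an immediate consequence. The plan is to establish the three inclusions in order; each rests on a different ingredient already available to us.

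The first inclusion $\fra_\ell^m \subseteq \fra_{\ell m}$ is nothing more than the defining multiplicative property $\fra_i \cdot \fra_j \subseteq \fra_{i+j}$ of a graded family, iterated $m-1$ times. The second inclusion $\fra_{\ell m} \subseteq \MI{\fra\bull^{\ell m}}$ is precisely part (i) of Exercise~\ref{Property.GSI} applied at index $\ell m$. So the content lies in the third inclusion $\MI{\fra\bull^{\ell m}} \subseteq \MI{\fra\bull^\ell}^m$.

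For this third inclusion I would unwind the definition of an asymptotic multiplier ideal of a graded family and invoke the Subadditivity Theorem~\ref{Subadditivity.Theorem}. Choose $p$ sufficiently large and divisible so that simultaneously
\[
\MI{\fra\bull^{\ell m}} \ = \ \MI{(\fra_p)^{\ell m/p}} \quad \text{and} \quad \MI{\fra\bull^\ell} \ = \ \MI{(\fra_p)^{\ell/p}};
\]
this is legitimate because Proposition/Definition~\ref{Prop.Def.Defining.AMI} guarantees stabilization for all large enough $p$ depending only on the coefficient. Applying the Subadditivity Theorem to $\fra_p$ with itself, and iterating $m-1$ times, yields
\[
\MI{(\fra_p)^{\ell m/p}} \ \subseteq \ \MI{(\fra_p)^{\ell/p}} \cdot \MI{(\fra_p)^{\ell(m-1)/p}} \ \subseteq \ \cdots \ \subseteq \ \MI{(\fra_p)^{\ell/p}}^m,
\]
which is the required containment. (A cleaner formulation: the rescaled system $\fra\bull^{(\ell)} := \{\fra_{\ell k}\}_{k\ge 0}$ is itself a graded family, with $\MI{(\fra\bull^{(\ell)})^m} = \MI{\fra\bull^{\ell m}}$ and $\MI{\fra\bull^{(\ell)}} = \MI{\fra\bull^\ell}$, so the desired inclusion is exactly part (ii) of Exercise~\ref{Property.GSI} for this rescaled family.)

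Given the three-term chain, the ``In particular'' clause is immediate: if $\MI{\fra\bull^\ell} \subseteq \frb$ then raising to the $m$-th power gives $\MI{\fra\bull^\ell}^m \subseteq \frb^m$, and chaining back yields $\fra_{\ell m} \subseteq \frb^m$. I do not expect a substantive obstacle, since once subadditivity and the defining property of asymptotic multiplier ideals of graded families are in hand, the argument is almost formal. The only mildly delicate point is the simultaneous choice of stabilizing index $p$ in the third inclusion, which is settled by Proposition/Definition~\ref{Prop.Def.Defining.AMI}.
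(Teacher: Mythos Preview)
Your proposal is correct and matches the paper's approach: the paper simply marks the proposition with \qed, noting beforehand that it ``follows formally from the subadditivity theorem in the form of Exercise~\ref{Property.GSI},'' which is exactly what you spell out (your ``cleaner formulation'' via the rescaled family $\{\fra_{\ell k}\}$ is the most direct reading of that reference). One small quibble: the stabilization you invoke is the graded-family analogue of Proposition/Definition~\ref{Prop.Def.Defining.AMI} rather than that result itself, but the same argument applies verbatim.
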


In spite of the rather formal nature of this result, it has a surprising application to symbolic powers. Specifically, recall from Example \ref{symbolic.power.example} that if $Z \subseteq X$ is a reduced subscheme with ideal $\frq \subseteq \OO_X$, then the symbolic powers of $\frq$ are defined to be:
\[
\frq^{(m)} \ = \ \big \{ f \in \OO_X \ \big | \ \ord_x(f) \ge m \ \text{ for general } x \in Z \big \}.
\]
Clearly $\frq^m \subseteq \frq^{(m)}$, and if $Z$ is non-singular then equality holds. However in general the inclusion is strict:
\begin{example}
Let $Z \subseteq \CC^3 = X$ be the union of the three coordinate axes, so that 
\[  \frq \ = \ ( xy \, , \, yz \, , \, xz ) \ \subseteq \ \CC[x,y,z]. \]
Then $xyz \in \frq^{(2)}$, but $xyz \not \in \frq^2$.
\end{example}

A result of Swanson  \cite{Swan} (holding in much greater algebraic generality) states that there is an integer $k = k(Z)$ with the property that 
\[  \frq^{(km)} \ \subseteq \ \frq^m\]
for every $m$. It is natural to suppose that $k(Z)$ measures in some way the singularities of $Z$, but in fact it was established in \cite{ELS} that there is a uniform result.
\begin{theorem}
Assume that $Z$ has pure codimension $e$ in $X$. Then
\[  \frq^{(em)} \ \subseteq \ \frq^m \]
for every $m$. In particular, for any reduced $Z$ one has
\[ \frq^{(dm)} \ \subseteq \ \frq^m \]
for every $m$, where as usual $d = \dim X$. 
\end{theorem}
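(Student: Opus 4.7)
The plan is to apply the Proposition just stated to the graded family $\fra\bull = \{\frq^{(m)}\}$ of symbolic powers from Example \ref{symbolic.power.example}. That Proposition reduces the theorem to establishing a single inclusion of ideals, namely
\[
\MI{\fra\bull^{\,e}} \ \subseteq \ \frq.
\]
Indeed, once this is known, for any $m$ we get $\frq^{(em)} = \fra_{em} \subseteq \MI{\fra\bull^{em}} \subseteq \MI{\fra\bull^{\,e}}^m \subseteq \frq^m$, where the first step is Exercise \ref{Property.GSI}(i) and the second is the subadditivity chain in the Proposition.

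To prove the inclusion $\MI{\fra\bull^{\,e}} \subseteq \frq$, I would argue one minimal prime at a time. Since $Z$ is reduced of pure codimension $e$, one has $\frq = \bigcap_i \frp_i$ where the $\frp_i$ range over the minimal primes, and membership in $\frp_i$ can be tested at the generic point $\eta_i$ of the corresponding component: $f\in \frp_i$ iff the image of $f$ in $\OO_{X,\eta_i}$ lies in $\frakm_{\eta_i}$. Asymptotic multiplier ideals of graded families commute with localization (one computes via a log resolution of some $\fra_p$ restricted to an affine chart around $\eta_i$), so it suffices to verify
\[
\MI{\fra\bull^{\,e}}_{\eta_i} \ \subseteq \ \frakm_{\eta_i}
\]
inside the local ring $R_i = \OO_{X,\eta_i}$, which is regular of dimension exactly $e$ because $X$ is smooth and $Z$ has pure codimension $e$.

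Now comes the local computation. By the very definition of the symbolic power, $\frq^{(p)}\cdot R_i = \frakm_{\eta_i}^{p}$ for every $p$, so the localized graded family $(\fra\bull)_{\eta_i}$ is simply the family of ordinary powers of the maximal ideal. Hence, for $p\gg 0$,
\[
\MI{\fra\bull^{\,e}}_{\eta_i} \ = \ \MI{\bigl(\frakm_{\eta_i}^{p}\bigr)^{e/p}} \ = \ \MI{\frakm_{\eta_i}^{\,e}},
\]
and Howald's theorem (or a direct resolution of $\frakm_{\eta_i}$ by blowing up the closed point) yields $\MI{\frakm_{\eta_i}^{\,e}} = \frakm_{\eta_i}$ in a regular local ring of dimension $e$. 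This establishes $\MI{\fra\bull^{\,e}} \subseteq \frp_i$ for every $i$, hence $\MI{\fra\bull^{\,e}} \subseteq \frq$, completing the pure-codimension case. For a general reduced $Z$, the same argument applied at the generic point of each component of codimension $e_i \le d$ gives $\MI{\fra\bull^{\,d}}_{\eta_i} = \MI{\frakm_{\eta_i}^{\,d}} \subseteq \frakm_{\eta_i}$ (since raising the exponent only deepens multiplier ideals), so $\MI{\fra\bull^{\,d}} \subseteq \frq$ and the Proposition delivers $\frq^{(dm)}\subseteq \frq^m$.

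The main obstacle is the localization step: one needs that forming the asymptotic multiplier ideal of a graded system of ideals is compatible with passage to $\OO_{X,\eta}$ for $\eta$ a generic point of $Z$, so that the key identity $\MI{\fra\bull^{\,e}}_{\eta} = \MI{\frakm_\eta^{\,e}}$ actually makes sense and holds. This is a general but slightly delicate consequence of the birational/functorial construction of multiplier ideals together with the Noetherian stabilization used to define $\MI{\fra\bull^{\,c}}$; modulo this point, everything else is either formal (from the Proposition and subadditivity) or the one-line Howald calculation in a regular local ring of dimension $e$.
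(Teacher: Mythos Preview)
Your argument is correct and follows essentially the same route as the paper: reduce via the Proposition to the single inclusion $\MI{\frq_{(\bullet)}^{\,e}} \subseteq \frq$, then check this at a generic point of each component where the symbolic powers become ordinary powers of a maximal (or smooth) ideal of codimension $e$, giving $\MI{\frq^{\,e}} = \frq$. The only difference is cosmetic: where you localize at $\eta_i$ and invoke compatibility of multiplier ideals with localization, the paper simply restricts to an open subset of $X$ on which $Z$ is smooth---since $\frq$ is radical, membership is tested at general points of $Z$---thereby sidestepping the localization issue you flag as the ``main obstacle.''
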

\begin{proof}
Consider the graded system $\frq_{(\bullet)}$ of symbolic powers. Thanks to the previous Proposition, it suffices to show that 
\[  \MI{\frq_{(\bullet)}^e } \ \subseteq \ \frq. \]
As $\frq$ is radical, membership in $\frq$ is tested at a general point of (each component of) $Z$, so we can assume that $Z$ is non-singular. But in this case 
\[  \MI{\frq_{(\bullet)}^e } \ = \ \MI{\frq^e} \ = \ \frq, \]
as required.
\end{proof}

\begin{example}
Let $T \subseteq \PP^2$ be a finite set, viewed as a reduced scheme, and denote by $I \subseteq \CC[X,Y,Z]$ the homogeneous ideal of $T$. Let $F$ be a form with the property that
\[  \mult_x(F) \ \ge \ 2m \ \ \text{for all} \ x \in T. \]
Then $F \in I^m$. (Apply the Theorem to the affine cone over $T$.) \qed
\end{example}


\section[Extension theorems]{Extension Theorems and Deformation Invariance of Plurigenera}

The most important recent applications of multiplier ideals have been to prove extension theorems. Originating in Siu's proof of the deformation invariance of plurigenera,  extension theorems are what opened the door to the spectacular progress in the minimal model program. Here we will content ourselves with a very simple result of this type, essentially the one appearing in  Siu's original article \cite{Siu} (see also \cite{Kawamata1}, \cite{Kawamata2}). As we will see,  the use of adjoint ideals renders the proof very transparent.\footnote{The utility of adjoint ideals in connection with extension theorems became clear in the work \cite{HM}, \cite{Tak} of Hacon-McKernan and Takayama on boundedness of pluricanonical mappings. The theory is further developed in \cite{HM2} and \cite{Ein.Popa}.}

The statement for which we aim is the following.
\begin{theorem} \label{Basic.Extension.Theorem}
Let $X$ be a smooth projective variety, and $S \subseteq X$ a smooth irreducible divisor. Set
\[  L \ = \ K_X \, + \, S \, + \, B, \]
where $B$ is any nef divisor, and assume that $L$ is big and that $S \not \subseteq \BBB_+(L)$. Then for every $m \ge 2$ the restriction map
\[ \HH{0}{X}{\OO_X(mL)} \lra \HH{0}{S}{\OO_S(mL_S) } \]
is surjective. 
\end{theorem}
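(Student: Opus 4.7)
The plan is to combine the asymptotic adjoint exact sequence \eqref{Asympt.Adj.Seq} with the Nadel-type vanishing of Theorem \ref{Properties.AMI.Thm}(ii), both applied to the big divisor $(m-1)L$ along $S$. Since $m \ge 2$, the divisor $(m-1)L$ is big; and since $S \not\subseteq \BBB_+(L) \supseteq \BBB(L) = \BBB((m-1)L)$, both asymptotic multiplier ideals $\MI{X, \alinser{(m-1)L}}$ and $\MI{S, \alinser{(m-1)L}_S}$ are defined, and the sequence reads
\[
0 \lra \MI{X, \alinser{(m-1)L}}\otimes \OO_X(-S) \lra \Adj{S}{X, \alinser{(m-1)L}} \lra \MI{S, \alinser{(m-1)L}_S} \lra 0.
\]

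Twisting through by $\OO_X(mL)$ and using the identity $mL - S = K_X + (m-1)L + B$, in which $(m-1)L$ is big and $B$ is nef, Theorem \ref{Properties.AMI.Thm}(ii) forces
\[
\HH{1}{X}{\OO_X(mL - S) \otimes \MI{X, \alinser{(m-1)L}}} \ = \ 0.
\]
The associated long exact cohomology sequence then produces a surjection
\[
\HH{0}{X}{\OO_X(mL)\otimes \Adj{S}{X, \alinser{(m-1)L}}} \ \twoheadrightarrow \ \HH{0}{S}{\OO_S(mL_S) \otimes \MI{S, \alinser{(m-1)L}_S}},
\]
and since $\Adj{S}{X, \alinser{(m-1)L}} \subseteq \OO_X$, the left-hand group embeds in $\HH{0}{X}{\OO_X(mL)}$. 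Consequently, any section of $\OO_S(mL_S)$ that lies in the right-hand $H^0$ is the restriction of a genuine section of $\OO_X(mL)$, and the theorem reduces to a single containment of subspaces of $\HH{0}{S}{\OO_S(mL_S)}$.

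The crux of the argument is precisely that containment: one must show every section $s \in \HH{0}{S}{\OO_S(mL_S)}$ in fact lies in $\HH{0}{S}{\OO_S(mL_S) \otimes \MI{S, \alinser{(m-1)L}_S}}$, i.e.\ that $s$ is locally divisible by the ideal $\MI{S, \alinser{(m-1)L}_S}$. This is where the stronger hypothesis $S \not\subseteq \BBB_+(L)$ plays its decisive role: a small perturbation by an ample class makes the restricted series of $(m-1)L$ to $S$ asymptotically comparable to the complete series $\alinser{(m-1)L_S}$, so sections of $p(m-1)L$ on $X$ restrict non-trivially to $S$ and provide enough control on the relevant ideals.

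My strategy for the key containment is a $p$-th power argument via subadditivity. Raising $s$ to a large power $p$ yields $s^p \in \HH{0}{S}{\OO_S(pmL_S)}$; combining $s^p$ with an appropriate supply of restricted sections of $p(m-1)L$ (which already vanish along $\MI{S, \alinser{(m-1)L}_S}$ by the restricted-series analogue of Theorem \ref{Properties.AMI.Thm}(i)) and invoking the subadditivity in Theorem \ref{Properties.AMI.Thm}(iv), one shows $s^p$ lies inside $\MI{S, \alinser{(m-1)L}_S}^p \otimes \OO_S(pmL_S)$; because multiplier ideals are integrally closed, extracting $p$-th roots returns $s$ itself to $\MI{S, \alinser{(m-1)L}_S} \otimes \OO_S(mL_S)$. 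The main obstacle is exactly making this step rigorous --- the construction of the auxiliary sections on $X$ and the passage from a vanishing statement on $s^p$ back to one on $s$ --- whereas the adjoint sequence, the Nadel vanishing, and the lifting from $\Adj{S}{}$ into $\OO_X$ are essentially formal once this key containment has been secured.
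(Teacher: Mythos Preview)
Your reduction via the asymptotic adjoint sequence and Nadel vanishing is exactly right and matches the paper's Lemma \ref{Extn.Thm.Lemma}(i). The shape of the endgame is also correct: the paper likewise uses subadditivity and a limiting argument in $q$ (phrased valuation-theoretically, which is equivalent to your appeal to integral closure) to pass from an inclusion for high powers to the desired containment $\bs{mL_S}\subseteq\MI{S,\alinser{(m-1)L}_S}$.

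The genuine gap is the middle step you yourself flag as ``the main obstacle,'' and what is missing is not a detail but the central mechanism of Siu's argument: an \emph{inductive} comparison between the ideals $\MI{S,\alinser{pL_S}}$ (complete series on $S$) and $\MI{S,\alinser{qL}_S}$ (restricted series from $X$). In the paper this is Proposition \ref{Inductive.Inclusion.Prop}: one fixes a very ample $A$ so that $\OO_S(qL_S+A_S)\otimes\MI{S,\alinser{qL}_S}$ is globally generated for all $q$, uses $S\not\subseteq\BBB_+(L)$ to choose $D\in\linser{k_0L-A}$ meeting $S$ properly, and then proves by induction on $p$ that
\[
\MI{S,\alinser{pL_S}}\otimes\OO_S(-D_S)\ \subseteq\ \MI{S,\alinser{(p+k_0-1)L}_S}.
\]
The inductive engine is Lemma \ref{Extn.Thm.Lemma}(ii): once a section on $S$ vanishes along $\MI{S,\alinser{(m-1)L}_S}$ it lifts to $X$, hence lies in $\bs{mL}\cdot\OO_S$, hence in $\MI{S,\alinser{mL}_S}$. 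This lift-and-restrict cycle is what converts information about complete linear series on $S$ into information about restricted linear series. Your sentence ``combining $s^p$ with an appropriate supply of restricted sections of $p(m-1)L$'' does not supply this bootstrapping; note in particular that $s^p$ is a section of $pmL_S$, not $p(m-1)L_S$, so there is no direct way to place $s^p$ inside $\MI{S,\alinser{(m-1)L}_S}^p$ without first running the induction above with the correction term $\OO_S(-D_S)$.
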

\noi Recall that by definition $\BBB_+(L)$ denotes the stable base-locus of the divisor $kL - A$ for $A$ ample and $k \gg 0$, this being independent of $A$ provided that $k$ is sufficiently large. The hypothesis that $S \not \subseteq \BBB_+(L)$ guarantees in particular that $L_S$ is a big divisor on $S$. 

Before turning to the proof of Theorem \ref{Basic.Extension.Theorem}, let us see how it implies Siu's theorem on plurigenera in the general type case:
\begin{corollary}[Siu's Theorem on Plurigenera]
\label{Siu.Thm.Plurigenera}
Let 
\[ \pi : Y \lra T\]
 be a smooth projective family of varieties of general type. Then for each $m \ge 0$, the plurigenera 
\[  P_m(Y_t) \ =_{\textnormal{def}} \hh{0}{Y_t}{\OO_{Y_t}(mK_{Y_t})} \]
are independent of $t$. 
\end{corollary}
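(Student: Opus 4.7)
The plan is to reduce to a one-parameter family and invoke Theorem~\ref{Basic.Extension.Theorem} with a carefully chosen auxiliary divisor, combining extension with semicontinuity. The cases $m=0,1$ are classical (the first trivial, the second because $h^{0}(\omega_{Y_t})$ is a Hodge number, constant in smooth projective families), so I focus on $m \ge 2$.

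First I would reduce to the case where $T$ is a smooth projective curve and $\pi : X \lra T$ is a smooth projective morphism from a smooth projective total space $X$, by base-changing to a curve joining $t_0$ to any chosen second point of $T$ and resolving a compactification of the resulting family. It then suffices to prove that $P_m(X_t)$ is locally constant at an arbitrary $t_0 \in T$; set $S = X_{t_0}$, a smooth projective variety of general type.

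The heart of the argument is to apply Theorem~\ref{Basic.Extension.Theorem} with
\[
L \ = \ K_X + S + \pi^*A, \qquad B \ = \ \pi^*A,
\]
where $A$ is a sufficiently ample divisor on $T$. Since $S$ is a fibre its normal bundle is trivial, so $K_S = (K_X + S)|_S$; together with $\pi^*A|_S = 0$ this gives $L|_S = K_S$, hence $mL|_S = mK_S$. Likewise, for $t \notin \Supp(A)$ one has $mL|_{X_t} = mK_{X_t}$. Because the fibres are of general type $K_X$ is $\pi$-big, so $L$ is big once $\deg A \gg 0$, and $B = \pi^*A$ is nef. Theorem~\ref{Basic.Extension.Theorem} then yields the surjectivity of the restriction map
\[
\HH{0}{X}{\OO_X(mL)} \lra \HH{0}{S}{\OO_S(mK_S)}.
\]
Choosing a basis $\sigma_1,\ldots,\sigma_N$ of the right-hand side and lifting to $\widetilde\sigma_i \in \HH{0}{X}{\OO_X(mL)}$, the restrictions $\widetilde\sigma_i|_{X_t} \in \HH{0}{X_t}{\OO_{X_t}(mK_{X_t})}$ (for $t \notin \Supp(A)$) are linearly independent at $t_0$ and hence remain so in a Zariski neighborhood of $t_0$, forcing $P_m(X_t) \ge P_m(X_{t_0})$ there. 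Combined with the classical upper-semicontinuity of $h^0$ in flat families, this produces constancy of $P_m$ near $t_0$, and hence on all of $T$.

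The principal obstacle is verifying the hypothesis $S \not\subseteq \BBB_+(L)$ required by Theorem~\ref{Basic.Extension.Theorem}, since $B = \pi^*A$ is only nef rather than ample, and the augmented base locus is notoriously delicate in such semi-ample situations. The expected route is a Kodaira-lemma argument exploiting the bigness of $L|_S = K_S$ together with the flexibility to perturb $A$ within its numerical class, but this is the step that must be handled with genuine care.
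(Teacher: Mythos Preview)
Your overall strategy coincides with the paper's: reduce to a curve, set $L = K_X + S + B$ with $B$ a pullback from the base so that $L|_S = K_S$, invoke Theorem~\ref{Basic.Extension.Theorem}, and combine the resulting extension with semicontinuity. The only cosmetic difference is that the paper keeps $T$ affine and compactifies to $\overline T$, taking $B = \overline\pi^*(A - K_{\overline T})$; this is equivalent to your setup after absorbing $K_{\overline T}$ into $A$.

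You are right that the verification of $S \not\subseteq \BBB_+(L)$ is the one genuine issue, but your proposed route via bigness of $L|_S = K_S$ is in danger of circularity: producing a section of $\OO_X(kL - H)$ not vanishing identically along $S$ from a section on $S$ is itself an extension problem. The paper's trick is simpler and exploits the fact that $S$ is a \emph{fibre}. Since $S = \pi^{-1}(t_0)$, the linear series $\linser{rS} = \pi^*\linser{r\cdot t_0}$ is free for $r \gg 0$. Having already chosen $k$ and an ample $H$ on $X$ with $\linser{kL - H} \ne \varnothing$, write a member as $aS + E'$ with $S \not\subseteq \Supp(E')$; then $S \not\subseteq \Bs{kL - H + qS}$ for all $q \gg 0$, because $E' + \linser{(q+a)S}$ sits inside this series and has base locus contained in $\Supp(E')$. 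Now observe that enlarging $A$ to $A + p\cdot t_0$ replaces $L$ by $L + pS$ without affecting nefness of $B$, bigness of $L$, or the identification $L|_S \cong K_S$; choosing $p$ large therefore arranges $S \not\subseteq \BBB_+(L)$, and your argument goes through.
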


\begin{proof}
We may assume without loss of generality that $m \ge 2$ and that $T$ is a smooth affine curve, and we write $K_t = K_{Y_t}$.  Fixing $0 \in T$, one has $P_m(Y_0) \ge P_m(Y_t)$ for generic $t$
 by semicontinuity, so the issue is to prove the reverse inequality
 \begin{equation} \hh{0}{Y_0}{\OO_{Y_0}(mK_0)} \ \le \ \hh{0}{Y_t}{\OO_{Y_t}(mK_t)}\tag{*} 
\end{equation} for $t \in T$ in a neighborhood of $0$. For this, consider the sheaf $\pi_* \OO_Y(mK_{Y/T})$ on $T$. This is a torsion-free (and hence locally free) sheaf, whose rank computes the generic value of the $m^\text{th}$-plurigenus $P_m(Y_t)$. Moreover, the fibre of $\pi_* \OO_Y(mK_{Y/T})$ at $0$ consists of those pluri-canonical forms on $Y_0$ that extend (over a neighborhood of $0$) to forms on $Y$ itself. So to prove (*), it suffices to show that any $\eta \in \HH{0}{Y_0}{\OO_{Y_0}(mK_0)}$ extends (after possibly shrinking $T$) to some
\[  \tilde \eta \ \in \ \HH{0}{Y}{\OO_{Y}(mK_{Y/T})}. \]
We will deduce this from Theorem \ref{Basic.Extension.Theorem}. 

Specifically, we start by completing $\pi$ to a morphism 
\[ \overline \pi : \overline Y \lra \overline T\]
 of smooth projective varieties, where  $T \subseteq \overline T$ and $Y = \overline{\pi}^{-1}(T)$.  We view $Y_0 \subseteq \overline Y$ as a smooth divisor on $\overline Y$.  Let $A$ be a very ample divisor on $\overline T$, let $B = \pi^*( A - K_{\overline{T}})$, and set
\[  L \ = \ K_{\overline{Y}} + Y_0 + B   \ \lin \ K_{\overline{Y}/\overline{T}} + Y_0 + \pi^* A. \]
 We can assume by taking $A$ sufficiently positive that $B$ is nef, and  we assert that we can arrange in addition that $L$ is big and  that $Y_0 \not \subseteq \BBB_+(L)$. For the first point, it is enough to show that if $D$ is an ample divisor on $\overline{Y}$, and if $A$ is sufficiently positive, then $kL - D$ is effective for some $k \gg 0$. 
 To this end, since $Y_t$ is of general type for general $t$, we can choose $k$ sufficiently large so that $kK_{\overline{Y}/\overline{T}}  - D$ is effective on a very general fibre of $\overline \pi$.  By taking $A$ very positive, we can then guarantee that
 \[
\overline{\pi}_* \OO_{\overline Y}(kL - D) \ = \ \overline{\pi}_*  \OO_{\overline Y}(kK_{\overline{Y}/\overline{T}}- D) \otimes \OO_{\overline{T}}(kA + k0)  
 \]
 is non-zero and globally generated, which implies that $\hh{0}{\overline{Y}}{\OO_{\overline{Y}}(kL - D)} \ne 0$. For the assertion concerning $\BBB_+$, observe first that since $\linser{rY_0} = \overline \pi^*\linser{r \cdot 0}$ is free for $r \gg 0$, the divisor $Y_0 $ cannot not lie in the base locus of $\linser{kL - D + qY_0}$ for arbitrarily large $q$. On the other hand, by making $A$ more positive, we are free to replace $L$ by $L + pY_0$, i.e. we may suppose that $Y_0 \not \subseteq \BBB_+(L)$, as claimed.

 We may therefore apply Theorem \ref{Basic.Extension.Theorem} with $X = \overline{Y}$ and $S = Y_0$ to conclude that the restriction map
\begin{equation}
\HH{0}{\overline{Y}}{\OO_{\overline{Y}}(mL)} \lra \HH{0}{Y_0}{\OO_{Y_0}(mL)} 
\tag{*} \end{equation}
is surjective for every $m \ge 2$.    So  provided that we take $T$ sufficiently small so that $\OO_{\overline{T}}(A+0)|T$ is trivial, then 
\[ \OO_Y(mL) \ \cong\ \OO_Y(mK_{Y/T}), \] and surjectivity of (*) implies the surjectivity of
\[
\HH{0}{Y}{ \OO_Y(mK_{Y/T}) } \lra \HH{0}{Y_0}{\OO_{Y_0}(mK_0)},
\] 
as required.
\end{proof}

The proof of Theorem \ref{Basic.Extension.Theorem} basically follows \cite{Siu} (and \cite{PAG}), except that as we have mentioned the use of adjoint ideals substantially clarifies the presentation. Two pieces of  vocabulary  will be useful. First, if  $A$ is a divisor on a projective variety $V$, we denote by $\bs{A} \ \subseteq \OO_V$ the base ideal of the complete linear series determined by $A$. Secondly, given an ideal $\fra \subseteq \OO_V$, we say that a section $s \in \HH{0}{V}{\OO_V(A)}$ \textit{vanishes along} $\fra$ if it lies in the image of the natural map $\HH{0}{V}{\OO_V(A) \otimes \fra} \lra \HH{0}{V}{\OO_V(A)}$.

We also recall from the previous lecture a couple of facts about the asymptotic multiplier ideals associated to $L$ and its restriction to $S$:
\begin{remark} Assume that $L$ is big. Then:
\begin{enumerate}
\item[(i).]  For every $m \ge 0$,
\[  \MI{X,  \alinser{(m+1)L}} \ \subseteq \ \MI{X, \alinser{mL}}  \ \ ,  \ \ \MI{S,  \alinser{(m+1)L}_S} \ \subseteq \ \MI{S, \alinser{mL}_S}\]
\item[(ii).] (Nadel vanishing).   If $P$ is nef, then 
\[ \HHH{i}{X}{\OO_X(K_X + mL + P) \otimes \MI{X, \alinser{mL}}} \ =\ 0 \ \ \text{for } i > 0 \text{ and } m \ge 1. \]
\item[(iii).] (Subadditivity).  For any $m, q > 0$, 
\[  \MI{X, \alinser{mqL}} \ \subseteq \ \MI{X, \alinser{mL}}^q \ \ , \ \  \MI{S, \alinser{mqL}_S} \ \subseteq \ \MI{S, \alinser{mL}_S}^q. \qed \]
\end{enumerate}
\end{remark}

Returning to the situation and notation of the Theorem, 
 fix $m\ge 2$. Our analysis of extension questions revolves around the adjoint exact sequence:
  \small
\begin{equation} \label{Adj.Seq.in.Pf}
0 \to \MI{X, \alinser{(m-1)L} }\otimes \OO_X(-S) \to \Adj{S}{X, \alinser{(m-1)L}} \to\MI{S, \alinser{(m-1)L}_S} \to 0. 
\end{equation}
\normalsize
We summarize what we will use in 
\begin{lemma} \label{Extn.Thm.Lemma}
\begin{enumerate}
\item[\tn{(}i\tn{)}.] In order to prove the Theorem, it suffices to establish the inclusion
\begin{equation} \bs{ mL_S} \ \subseteq \ \MI{S, \alinser{(m-1)L}_S}.\notag
\end{equation}
\item[\tn{(}ii\tn{)}.] If $\fra \subseteq \MI{S, \alinser{(m-1)L}_S} $ is an ideal such that $\OO_S(mL_S) \otimes \fra$ is globally generated, then 
\begin{equation}
\fra\ \subseteq \ \MI{S, \alinser{mL}_S}. \notag 
\end{equation}
\end{enumerate}
\end{lemma}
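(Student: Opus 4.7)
My plan for both parts is to twist the adjoint exact sequence \eqref{Adj.Seq.in.Pf} by $\OO_X(mL)$ and feed the result into a single instance of Nadel vanishing. The crucial arithmetic identity is
\[
mL - S \ = \ K_X + (m-1)L + B,
\]
immediate from $L = K_X + S + B$; by hypothesis $B$ is nef, so the asymptotic Nadel vanishing recorded as Remark (ii) of this section, applied with the nef twist $B$, gives
\[
\HH{1}{X}{\OO_X(mL - S) \otimes \MI{X, \alinser{(m-1)L}}} \ = \ 0.
\]
The long exact sequence coming from the $\OO_X(mL)$-twist of \eqref{Adj.Seq.in.Pf} therefore produces a surjection
\[
\HH{0}{X}{\OO_X(mL) \otimes \Adj{S}{X, \alinser{(m-1)L}}} \ \twoheadrightarrow \ \HH{0}{S}{\OO_S(mL_S) \otimes \MI{S, \alinser{(m-1)L}_S}},
\]
and since $\Adj{S}{X, \alinser{(m-1)L}} \subseteq \OO_X$, every class on the right is the restriction of a genuine section in $\HH{0}{X}{\OO_X(mL)}$.

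For part (i), note that every section of $\OO_S(mL_S)$ tautologically vanishes along $\bs{mL_S}$. If $\bs{mL_S} \subseteq \MI{S, \alinser{(m-1)L}_S}$, then every such section in fact lies in $\HH{0}{S}{\OO_S(mL_S) \otimes \MI{S, \alinser{(m-1)L}_S}}$, which by the surjection above lifts to $\HH{0}{X}{\OO_X(mL)}$. This is exactly the extension statement of Theorem \ref{Basic.Extension.Theorem}.

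For part (ii), set $V \subseteq \HH{0}{S}{\OO_S(mL_S)}$ to be the image of $\HH{0}{S}{\OO_S(mL_S) \otimes \fra}$; the global generation hypothesis says precisely that $\fra = \bs{V}$. Since $\fra \subseteq \MI{S, \alinser{(m-1)L}_S}$, the subspace $V$ sits in the codomain of the displayed surjection, and so every element of $V$ lifts to a section of $\OO_X(mL)$ on $X$. Writing $W_m = \Image\bigl(\HH{0}{X}{\OO_X(mL)} \to \HH{0}{S}{\OO_S(mL_S)}\bigr)$ for the restricted linear series, this yields $V \subseteq W_m$ and hence
\[
\fra \ = \ \bs{V} \ \subseteq \ \bs{W_m}.
\]
I will close with the standard chain $\bs{W_m} \subseteq \MI{S, \linser{W_m}} \subseteq \MI{S, \alinser{mL}_S}$: the first inclusion is the general fact $\fra \subseteq \MI{\fra}$ (valid because $K_{X^\pr/X}$ is effective on a log resolution), while the second follows from Proposition/Definition \ref{Prop.Def.Defining.AMI}, since $\MI{S, \linser{W_m}}$ is the $p = 1$ term of the ascending family $\{\MI{S, \tfrac{1}{p}\linser{W_{mp}}}\}$ whose stable value is $\MI{S, \alinser{mL}_S}$.

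The whole argument is essentially formal once the identity $mL - S = K_X + (m-1)L + B$ is in view, and I do not foresee a substantive obstacle. The main point to verify carefully is the naturality of the adjoint sequence, so that the cohomological lifting across the surjection really corresponds to the honest restriction map $\HH{0}{X}{\OO_X(mL)} \to \HH{0}{S}{\OO_S(mL_S)}$; once this is granted, both parts reduce to a one-line diagram chase. The true value of the adjoint ideal is that it packages $\MI{X, \alinser{(m-1)L}}$ on $X$ and $\MI{S, \alinser{(m-1)L}_S}$ on $S$ into a single ideal on $X$, turning what would otherwise be a delicate comparison-under-restriction problem into a standard cohomological vanishing.
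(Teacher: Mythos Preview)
Your proof is correct and follows essentially the same approach as the paper: twist the adjoint sequence by $\OO_X(mL)$, invoke asymptotic Nadel vanishing via the identity $mL-S \equiv K_X + (m-1)L + B$, and deduce that sections of $\OO_S(mL_S)$ vanishing along $\MI{S,\alinser{(m-1)L}_S}$ lift to $X$. For part (ii) the paper tracks an individual section and uses the inclusion $\bs{mL}\cdot\OO_S \subseteq \MI{S,\alinser{mL}_S}$, whereas you package the same step via $\fra = \bs{V} \subseteq \bs{W_m} \subseteq \MI{S,\alinser{mL}_S}$; this is the same argument in slightly different clothing.
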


\begin{proof}
For both statements, we twist through in \eqref{Adj.Seq.in.Pf} by $\OO_X(mL)$. Noting that 
\[
mL - S \lin (m-1)L + K_X + B,
\]
it follows from Nadel vanishing and the hypotheses that 
\[  \HHH{1}{X}{\OO_X(mL -S) \otimes \MI{X, \alinser{(m-1)L} }} \ = \ 0 \]
provided that $m \ge 2$. (Note that this is where it is important that we use asyptotic multiplier ideals: we do not assume that $B$ is more than nef, and so there is no ``excess positivity" in the required vanishing.) Therefore the exact sequence  \eqref{Adj.Seq.in.Pf} gives the surjectivity of the map
\[
\HHH{0}{X}{\OO_X(mL) \otimes \tn{Adj}_S(\alinser{(m-1)L}} \lra \HHH{0}{S}{\OO_S(mL_S) \otimes \MI{\alinser{(m-1)L}_S}}.
\]
The group on the left being a subspace of $\HH{0}{X}{\OO_X(mL)}$, this means  that any section of $\OO_S(mL_S)$ vanishing along $\MI{S, \alinser{(m-1)L}_S}$ lifts to a section of $\OO_X(mL)$. So if we know the inclusion in (i),  this implies that all sections of $\OO_S(mL_S)$ lift to $X$.

 For (ii), remark that  if $M$ is a (Cartier) divisor on a projective variety $V$, and if $\fra, \frb \subseteq \OO_V$ are ideals such that $\OO_V(M) \otimes \fra$ is globally generated, then $\fra \subseteq \frb$ if and only if
\[  \HH{0}{V}{\OO_V(M) \otimes \fra} \ \subseteq \ \HH{0}{V}{\OO_V(M) \otimes \frb)},\] both sides being viewed as subspaces of $\HH{0}{V}{\OO_V(M)}$. So in our situation it is enough to show that 
\begin{equation}
\HHH{0}{S}{\OO_S(mL_S) \otimes \fra)} \ \subseteq \ \HHH{0}{S}{\OO_S(mL_S) \otimes \MI{S, \alinser{mL}_S}}.  \notag
\end{equation}
Suppose then that \[ \overline t \ \in\ \HH{0}{S}{\OO_S(mL_S)\otimes \fra}\] is a section of $\OO_S(mL_S)$ vanishing along $\fra$. Since $\fra \subseteq \MI{S, \alinser{(m-1)L}_S}$, it follows as above from \eqref{Adj.Seq.in.Pf} that $\overline t$ lifts to a section $t \in \HH{0}{X}{\OO_X(mL)}$.  By definition $t$ vanishes along $\bs{mL} \subset \OO_X$, and hence its restriction $\overline t$ vanishes along 
\[ \bs{ mL } \cdot \OO_S \ \subseteq \ \MI{S, \alinser{mL}_S}, \]
as required.
 \end{proof}

In order to apply the statement (i) of the Lemma, the essential point is a comparison between the multiplier ideals of the restricted divisor $pL_S$ and those of the restricted linear series of $pL$ from $X$ to $S$:
\begin{proposition} \label{Inductive.Inclusion.Prop}
There exist a very ample divisor $A$ on $X$, a positive integer $k_0 > 0$, and a divisor $D \in \linser{k_0 L - A}$ meeting $S$ properly, such that
\begin{equation} \label{Inductive.Inclusion.Eqn}
 \MI{S, \alinser{pL_S}} \otimes \OO_S(-D_S) \ \subseteq \ \MI{S, \alinser{(p+k_0 -1)L}_S} 
\end{equation}
for every $p \ge 0$. 
\end{proposition}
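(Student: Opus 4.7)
The plan is to verify the inclusion locally at each point $x \in S$ by constructing, for $q \gg 0$, enough divisors in the restricted linear series $|W_{(p+k_0-1)q}| \subseteq |H^0(S, \OO_S((p+k_0-1)qL_S))|$ to exhibit membership of any local section of the left-hand side in $\MI{S, \alinser{(p+k_0-1)L}_S}$. These divisors will be produced by lifting sections from $S$ to $X$ via Nadel vanishing, then multiplying by an appropriate power of the defining section $s_D$ of $D$.

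The setup has three ingredients. First, applying the asymptotic form of uniform global generation (Exercise \ref{Asympt.Univ.Gl.Gen.Ex}) on the smooth projective variety $S$ produces a fixed divisor $B_0$ such that $\OO_S(M + B_0) \otimes \MI{S,\alinser{M}}$ is globally generated for every big divisor $M$ on $S$ --- in particular for $M = pqL_S$, since $L_S$ is big thanks to $S \not\subseteq \BBB_+(L)$. Second, I would choose $A$ very ample on $X$ sufficiently positive that $A|_S - L_S - B_0$ is effective on $S$ (so one may translate sections from level $pqL_S + B_0$ down to level $(p-1)qL_S + qA|_S$) and that the Nadel-type vanishings on $X$ needed to lift sections from $S$ to $X$ hold (in the multiplier-ideal-twisted form, since $L$ is only assumed big, not nef). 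Third, since $L$ is big and $S \not\subseteq \BBB_+(L)$, for $k_0 \gg 0$ the linear series $|k_0L - A|$ contains a divisor $D$ meeting $S$ properly; fix such a $D$ with defining section $s_D \in H^0(X, \OO_X(k_0L - A))$.

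The heart of the proof is the lifting step. Given a local section $g$ of $\MI{S,\alinser{pL_S}}\otimes\OO_S(-D_S)$ near $x$ and $q \gg 0$, global generation lets one express $g$ locally as a combination of global sections $s_\alpha \in H^0(S, \OO_S(pqL_S + B_0)\otimes\MI{S,\alinser{pqL_S}})$. Tensoring each with a fixed section of $\OO_S(q(A|_S - L_S) - B_0)$ yields global sections of $\OO_S((p-1)qL_S + qA|_S)\otimes\MI{S,\alinser{pqL_S}}$, which Nadel vanishing then lifts to global sections $\tilde t_\alpha \in H^0(X, \OO_X((p-1)qL + qA))$. Multiplication by $s_D^q \in H^0(X, \OO_X(qk_0L - qA))$ produces
\[
\tilde t_\alpha \cdot s_D^q \ \in \ H^0\bigl(X, \OO_X((p+k_0-1)qL)\bigr),
\]
whose restrictions to $S$ lie in $W_{(p+k_0-1)q}$ and vanish along $qD_S$. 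A direct comparison of orders of vanishing --- matching the twist $\OO_S(-D_S)$ against one copy of $s_D|_S$ inside $s_D^q|_S$, and matching the multiplier-ideal data of $\MI{S,\alinser{pqL_S}}$ against the vanishing encoded in the $s_\alpha$ --- gives $g \in \MI{S,\frac{1}{q}|W_{(p+k_0-1)q}|}$, which by the asymptotic computation is the desired conclusion $g \in \MI{S,\alinser{(p+k_0-1)L}_S}$.

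The main obstacle is the calibration of positivity needed to make the bookkeeping close: $A$ must simultaneously be positive enough for $A|_S - L_S - B_0$ to be effective (so the translation of the $s_\alpha$ makes sense) and for the Nadel vanishing on $X$ lifting to go through, while $k_0$ must be chosen large enough for $|k_0L - A|$ to contain divisors meeting $S$ properly. The saving of a single $L$ (the passage from a naive index $(p+k_0)$ to the sharp index $(p+k_0-1)$) is encoded precisely in the fact that the exchange of $qL_S$ on $S$ for $qA|_S$ on $S$ extends canonically from $X$ via $A$ --- this is what allows the product with $s_D^q$ to land at level $(p+k_0-1)qL$ rather than $(p+k_0)qL$ on $X$. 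A secondary subtlety is the precise Nadel vanishing statement with asymptotic multiplier-ideal twist, essential because $L$ is only assumed big.
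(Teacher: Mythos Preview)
Your lifting step is where the argument breaks. You want to lift a section lying in
\[
H^0\bigl(S,\ \OO_S((p-1)qL_S + qA_S)\otimes \MI{S,\alinser{pqL_S}}\bigr)
\]
to $H^0\bigl(X,\OO_X((p-1)qL + qA)\bigr)$. Because $L$ is only big, plain Kodaira vanishing does not give $H^1\bigl(X,\OO_X((p-1)qL+qA-S)\bigr)=0$: the class $(p-1)qL+qA-S-K_X = ((p-1)q-1)L + qA + B$ contains a large multiple of $L$, and no $A$ fixed in advance makes this ample for all $p$. The only vanishing available is Nadel for the adjoint sequence \eqref{Adj.Seq.in.Pf}, and that surjects precisely onto sections on $S$ that vanish along the \emph{restricted} ideal $\MI{S,\alinser{((p-1)q-1)L}_S}$. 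Your section is only known to vanish along the \emph{complete-series} ideal $\MI{S,\alinser{pqL_S}}$, and comparing these two species of ideals is exactly the content of the Proposition --- so the step is circular. (There is a related slip in step~4: global generation of $\OO_S(pqL_S+B_0)\otimes\MI{S,\alinser{pqL_S}}$ expresses sections of $\MI{S,\alinser{pqL_S}}$ via the $s_\alpha$, but $g$ lies only in the larger ideal $\MI{S,\alinser{pL_S}}$.)

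The paper resolves this by \emph{induction on $p$} rather than a direct construction. The base case $p=0$ is the tautology $\OO_S(-D_S)\subseteq \MI{S,\alinser{(k_0-1)L}_S}$ coming from $D+A\in|k_0L|$. The inductive step is Lemma~\ref{Extn.Thm.Lemma}(ii) applied with $m=p+k_0$ and $\fra=\MI{S,\alinser{pL_S}}\otimes\OO_S(-D_S)$: the inductive hypothesis places $\fra$ inside the restricted ideal $\MI{S,\alinser{(p+k_0-1)L}_S}$, which is exactly the hypothesis the lemma needs to lift and hence land in $\MI{S,\alinser{(p+k_0)L}_S}$. The bootstrap through the restricted ideals is what makes each lift legitimate; your one-shot approach lacks this foothold.
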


Granting this for the moment, we complete the proof of the Theorem. In fact, fix $m$, and apply equation \eqref{Inductive.Inclusion.Eqn} with $p = qm$ for $q \gg 0$.
One finds:
\begin{align*}
\bs{mL_S}^q \otimes \OO_S(-D_S)\  &\subseteq \ \bs{mqL_S} \otimes \OO_S(-D_S) \\
&\subseteq \ \MI{S,\alinser{mqL_S}} \otimes \OO_S(-D_S) \\
&\subseteq \ \MI{S, \alinser{(mq + k_0 - 1)L}_S }\\
& \subseteq \ \MI{S, \alinser{mqL}_S} \\
& \subseteq \ \MI{S, \alinser{mL}_S}^q,
\end{align*}
the last inclusion coming from the subadditivity theorem. But we assert that having the inclusion
\begin{equation}  \bs{mL_S}^q \otimes \OO_S(-D_S) \ \subseteq \ \MI{S, \alinser{mL}_S}^q\tag{+}\end{equation}
for all $q \gg 0$ forces $\bs{mL_S} \subseteq \MI{S, \alinser{mL}_S}$ and hence also the inclusion   in Lemma \ref{Extn.Thm.Lemma} (i). In fact, it follows from the construction of multiplier ideals that there are finitely many divisors $E_\alpha$ lying over $S$, together with integers $r_\alpha > 0$, such that a germ $f \in \OO_S$ lies in $\MI{S, \alinser{mL}_S}$ if and only if $\ord_{E_\alpha}(f) \ge r_\alpha$ for every $\alpha$. But (+) implies that if $f \in    \bs{mL_S}$, then
\[
q \cdot \ord_{E_\alpha}  (f ) + \ord_{E_\alpha}(D_S) \ \ge \ q \cdot r_\alpha
\]
for each $\alpha$, and letting $q \to \infty$ one finds that $\ord_{E_\alpha}   (f)  \ge r_\alpha$, as required. Thus Lemma \ref{Extn.Thm.Lemma} applies, and this completes the proof of the Theorem granting Proposition \ref{Inductive.Inclusion.Prop}.

It remains to prove the Proposition. Here the essential point is statement (ii) of the Lemma. 
 \begin{proof}[Proof of Proposition \ref{Inductive.Inclusion.Prop}]  By Nadel vanishing and Castelnuovo-Mumford regularity, we can find a very ample divisor $A$ so that for every $q \ge 0$ the sheaf \[
 \OO_S(qL_S + A_S) \otimes \MI{S, \alinser{qL}_S}
 \]
 is globally generated (cf Exercise \ref{Asympt.Univ.Gl.Gen.Ex}).  Next, since $S \not \subseteq \BBB_+(L)$, for $k_0 \gg 0$ we can take a divisor
$ D \in \linser{k_0L - A}$ that does not contain $S$. We will show by induction on $p$ that \eqref{Inductive.Inclusion.Eqn}
 holds with these choices of the data. 
 
 For $p = 0$ the required inclusion holds by virtue of the fact that $D + A \lin  k_0 L$, which yields
 \[  
 \OO_S(-D_S) \ \subseteq \ \MI{S, \alinser{k_0L}_S} \ \subseteq \ \MI{S, \alinser{(k_0 -1)L}_S}.
 \]
 Assuming that \eqref{Inductive.Inclusion.Eqn}
is satisfied for a given value of $p$, we will show that it holds also for $p + 1$. To this end, observe first that
\[  \OO_S\big((p + k_0)L_S - D_S\big) \otimes \MI{S, \alinser{pL_S}} \]
is globally generated thanks to our choice of $A$. Applying Lemma \ref{Extn.Thm.Lemma} (ii) with $m = p + k_0$ and $\fra = \MI{S, \alinser{pL_S}} \otimes \OO_S(-D_S)$, it follows using the inductive hypothesis 
\[
 \MI{S, \alinser{pL_S}} \otimes \OO_S(-D_S) \ \subseteq \ \MI{S, \alinser{(p+k_0 -1)L}_S} 
\]
 that in fact \[ \MI{S, \alinser{pL_S}} \otimes \OO_S(-D_S)
\ \subseteq \   \MI{S, \alinser{(p+k_0)L}_S}.\] Therefore also
\[   \MI{S, \alinser{(p+1)L_S}} \otimes \OO_S(-D_S) \   \subseteq \
   \MI{S, \alinser{(p+k_0)L}_S} ,
\]
which completes the induction.
 \end{proof}

\end{document}